\newtheorem{thm}{Theorem}[section]
\newtheorem{lem}[thm]{Lemma}
\newtheorem{prp}[thm]{Proposition}
\newtheorem{cor}[thm]{Corollary}
\newtheorem{dfn}[thm]{Definition}
\theoremstyle{definition}
\newtheorem{rem}[thm]{Remark}
\newcommand{\C}{\mathbb{C}}
\newcommand{\R}{\mathbb{R}}
\newcommand{\N}{\mathbb{N}}
\newcommand{\Z}{\mathbb{Z}}
\newcommand{\Sphere}{\mathbb{S}}          
\newcommand{\weight}{\omega}              
\newcommand{\tc}{\,:\,}
\newcommand{\defeq}{\mathrel{:=}}
\DeclareMathOperator{\tr}{tr}             
\newcommand{\id}{\mathop{\mathrm{id}}}    
\DeclareMathOperator*{\esssup}{ess\,sup}  
\DeclareMathOperator{\supp}{supp}         
\newcommand{\FE}{\mathcal{E}}             
\newcommand{\Trv}{\mathcal{T}}            
\newcommand{\opL}{\mathfrak{L}}           
\newcommand{\group}[1]{\mathrm{#1}}       
\newcommand{\dist}{\varrho}                        
\newcommand{\boxb}{\Box_b}                         
\newcommand{\lie}{\mathfrak}
\newcommand{\vecU}{\mathbf{U}}
\newcommand{\II}{\mathcal{I}}
\newcommand{\Bdd}{\mathcal{B}}
\newcommand{\chr}{\mathbbm{1}}
\newcommand{\thr}{\varsigma}
\newcommand{\sloc}{\mathrm{sloc}}
\newcommand{\Heis}{H}
\DeclareFontFamily{U}{matha}{\hyphenchar\font45}
\DeclareFontShape{U}{matha}{m}{n}{
      <5> <6> <7> <8> <9> <10> gen * matha
      <10.95> matha10 <12> <14.4> <17.28> <20.74> <24.88> matha12
      }{}
\DeclareSymbolFont{matha}{U}{matha}{m}{n}
\DeclareFontFamily{U}{mathx}{\hyphenchar\font45}
\DeclareFontShape{U}{mathx}{m}{n}{
      <5> <6> <7> <8> <9> <10>
      <10.95> <12> <14.4> <17.28> <20.74> <24.88>
      mathx10
      }{}
\DeclareSymbolFont{mathx}{U}{mathx}{m}{n}
\DeclareMathDelimiter{\vvvert}{0}{matha}{"7E}{mathx}{"17}
\newcommand{\enref}[1]{\textup{(\ref{#1})}}
\begin{document}
\title[Joint functional calculi and a sharp multiplier theorem]{Joint functional calculi and a sharp multiplier theorem for the Kohn Laplacian on spheres}

\author[A. Martini]{Alessio Martini}
\address{School of Mathematics\\ University of Birmingham\\Edgbaston\\Birmingham B15 2TT \\ United Kingdom}
\email{a.martini@bham.ac.uk}

\subjclass[2000]{Primary: 42B15, 43A85; Secondary: 32V20}

\begin{abstract}
Let $\boxb$ be the Kohn Laplacian acting on $(0,j)$-forms on the unit sphere in $\C^n$. In a recent paper of Casarino, Cowling, Sikora and the author, a spectral multiplier theorem of Mihlin--H\"ormander type for $\boxb$ is proved in the case $0<j<n-1$. Here we prove an analogous theorem in the exceptional cases $j=0$ and $j=n-1$, including a weak type $(1,1)$ endpoint estimate. We also show that both theorems are sharp. The proof hinges on an abstract multivariate multiplier theorem for systems of commuting operators.
\end{abstract}

\maketitle

\section{Introduction}

Let $X$ be a measure space, $\FE$ be a complex vector bundle on $X$ with a hermitian metric, and $\opL$ be a (possibly unbounded) self-adjoint operator on the space $L^2(\FE)$ of $L^2$-sections of $\FE$. By the spectral theorem, we can write
\begin{equation}\label{eq:spres}
\opL = \int_\R \lambda \, dE_\opL(\lambda)
\end{equation}
for some projection-valued measure $E_\opL$, called the spectral resolution of $\opL$. A functional calculus for $\opL$ is then defined via spectral integration and, for all Borel functions $F : \R \to \C$, the operator
\[
F(\opL) = \int_\R F(\lambda) \, dE_\opL(\lambda)
\]
is bounded on $L^2(\FE)$ if and only if $F$ is an $E_{\opL}$-essentially bounded function.

Characterizing, or just giving nontrivial sufficient conditions for the $L^p$-bounded\-ness of $F(\opL)$ for some $p \neq 2$ in terms of properties of the ``spectral multiplier'' $F$ is a much harder problem. This question has been particularly studied in the case $\opL$ is the Laplace operator on $\R^d$, or some analogue thereof acting on sections of a vector bundle over a smooth $d$-manifold. For the Laplacian $\opL = -\Delta$ on $\R^d$, the classical Mihlin--H\"ormander multiplier theorem \cite{mihlin_multipliers_1956,hrmander_estimates_1960} tells us that $F(\opL)$ is of weak type $(1,1)$ and $L^p$-bounded for all $p \in (1,\infty)$ whenever the multiplier $F$ satisfies the scale-invariant local Sobolev condition
\begin{equation}\label{eq:mhcond}
\|F\|_{L^q_{s,\sloc}} \defeq \sup_{t\geq 0} \|F(t \, \cdot) \,\chi\|_{L^q_s(\R)} < \infty,
\end{equation}
for $q=2$ and some $s > d/2$; here $L^q_s(\R)$ is the $L^q$ Sobolev space of (fractional) order $s$ and $\chi \in C^\infty_c((0,\infty))$ is any nontrivial cutoff (different choices of $\chi$ give rise to equivalent local Sobolev norms). This result is sharp, i.e., the threshold $d/2$ on the order of smoothness $s$ required on the multiplier $F$ cannot be lowered.

Here we are concerned with the case $\opL = \boxb$ is the Kohn Laplacian acting on sections of the bundle $\Lambda^{0,j} \Sphere$ of $(0,j)$-forms $(0 \leq j \leq n-1$) associated to the tangential Cauchy--Riemann complex on the unit sphere $\Sphere$ in $\C^n$, $n \geq 2$.  The sphere $\Sphere$ and the conformally equivalent Heisenberg group have been long studied as models for more general strictly pseudoconvex CR manifolds of hypersurface type \cite{folland_tangential_1972,folland_estimates_1974,geller_laplacian_1980}. The problem of obtaining a spectral multiplier theorem of Mihlin--H\"ormander type for $\boxb$ has been recently considered in \cite{casarino_spectral_sphere}, where the following result is proved.

\begin{thm}[\cite{casarino_spectral_sphere}]\label{thm:ccms}
Let $\boxb$ be the Kohn Laplacian on $(0,j)$-forms on the unit sphere $\Sphere$ in $\C^n$, where $0 < j < n-1$. For all bounded Borel functions $F : \R \to \C$, if $\|F\|_{L^2_{s,\sloc}} < \infty$ for some $s > (2n-1)/2$, then the operator $F(\boxb)$ is of weak type $(1,1)$ and $L^p$-bounded for all $p \in (1,\infty)$, and moreover
\[
\|F(\boxb)\|_{L^1 \to L^{1,\infty}} \leq C \|F\|_{L^2_{s,\sloc}}.
\]
\end{thm}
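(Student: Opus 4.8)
The plan is to derive Theorem~\ref{thm:ccms} from an abstract multivariate Mihlin--H\"ormander theorem for systems of commuting operators, applied to the pair $(\boxb,\vecU)$, where $\vecU$ is the infinitesimal generator of the Hopf circle action on $\Sphere\subset\C^n$ acting on sections of $\Lambda^{0,j}\Sphere$ --- equivalently, the restriction to $\Sphere$ of the central vector field $i\bigl(\sum_k z_k\partial_{z_k}-\bar z_k\partial_{\bar z_k}\bigr)$, which commutes with $\boxb$ and preserves the hermitian bundle structure. These two self-adjoint operators then admit a joint spectral resolution, and the first ingredient is its explicit description, read off from the decomposition of the $L^2$-sections of $\Lambda^{0,j}\Sphere$ into $\group{U}(n)$-isotypic components: on each bidegree-$(p,q)$ summand $\boxb$ acts as an explicit scalar, a quadratic-type expression in $(p,q)$, while $\vecU$ records a shift of $p-q$. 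Thus the joint spectrum lies on a discrete ``conic'' lattice in $\R^2$, and any $F(\boxb)$ equals $m(\boxb,\vecU)$ for the multiplier $m(x,y)\defeq F(x)$; the improvement over the homogeneous dimension will come from exploiting that, relative to the conic geometry of this lattice, such an $m$ is unusually smooth.

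For the singular-integral machinery, recall that $\Sphere$, with its Carnot--Carath\'eodory distance $\dist$ and surface measure, is a space of homogeneous type of homogeneous dimension $2n$, and that for $0<j<n-1$ the Kohn Laplacian is strictly positive --- the space of harmonic $(0,j)$-forms being trivial by Kohn--Rossi --- and generates a heat semigroup with Gaussian-type bounds and finite wave propagation speed. By standard Calder\'on--Zygmund theory and interpolation with the trivial bound $\|F(\boxb)\|_{L^2\to L^2}=\|F\|_\infty$, the weak type $(1,1)$ estimate with the stated explicit constant, and $L^p$-boundedness for $1<p<2$, will follow once one establishes a weighted Plancherel estimate for the kernel $K_{F(\boxb)}$ of the form
\[
\int_{\Sphere}\bigl(1+R\,\dist(x,y)\bigr)^{2\sigma}\,\bigl|K_{F(\boxb)}(x,y)\bigr|^2\,d\mu(x)\leq C\,R^{2n}\,\bigl\|F(R^{2}\,\cdot\,)\,\chi\bigr\|_{L^2_\sigma(\R)}^2,
\]
uniformly in $R\geq1$ and in $F$ supported in a fixed dyadic interval, for every $\sigma<(2n-1)/2$; the decisive point is that the weight exponent $\sigma$ can be pushed all the way to $(2n-1)/2$, rather than to the value $n$ that Gaussian bounds alone would yield, and it is this exponent that fixes the multiplier threshold. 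The range $2<p<\infty$ then follows by duality, the admissible class of $F$ being closed under conjugation.

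The heart of the matter --- and the step I expect to be the main obstacle --- is precisely this weighted Plancherel estimate with the sharp weight exponent. Here the joint functional calculus is used in an essential way: expanding $K_{F(\boxb)}$ over the joint spectrum of $(\boxb,\vecU)$ and performing the Fourier decomposition in the Hopf fibre, each $\vecU$-eigenspace (central frequency $m\in\Z$) reduces, after factoring out the residual symmetry, to a Jacobi-type operator whose eigenfunctions are Jacobi polynomials $P_k^{(\alpha,\beta)}$ with parameters depending on $m$, $j$ and $n$. The estimate then rests on (i) uniform pointwise and $L^2$ bounds for the corresponding Jacobi kernels in the large-$k$, large-parameter regime, capturing the decay in $\dist(x,y)$; and (ii) organizing the double sum over the $(k,m)$-lattice so that only $2n-2$ dimensions are charged against the base directions (those of $\mathbb{CP}^{n-1}$) and a single dimension against the fibre frequency, for a total of $2n-1$ --- the gain over the homogeneous dimension $2n$ being exactly that the quadratically scaling fibre direction is here counted only once, via a one-dimensional Sobolev-type summation over $m$. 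Carrying this out uniformly, while keeping track of the action of $\boxb$ on forms rather than functions through the $\group{U}(n)$-intertwiners, is the technical core; with this in hand, the abstract multivariate multiplier theorem assembles the pieces into Theorem~\ref{thm:ccms}.
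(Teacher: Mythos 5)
First, a point of bookkeeping: Theorem~\ref{thm:ccms} is not proved in this paper at all --- it is quoted from \cite{casarino_spectral_sphere} --- so the fair comparison is with the argument there, whose mechanism is reproduced in Sections~3--4 above for the exceptional cases. Your overall architecture (the $\group{U}(n)$-isotypic decomposition into the spaces $\mathcal{H}_{pq}$, Jacobi-polynomial expressions for the reproducing kernels, a weighted Plancherel estimate fed into the Duong--McIntosh singular-integral machinery, then interpolation and duality) does match that proof in outline. One peripheral correction: for $0<j<n-1$ the joint functional calculus with the Hopf vector field is not needed, since $\ker\boxb=0$ and the approximate identity can be taken inside the functional calculus of $\boxb$ itself; the joint calculus of $(L,U)$ is precisely the extra device introduced here for $j\in\{0,n-1\}$.

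The genuine gap is in your key estimate, which carries the wrong weight. You propose, in the notation of this paper, $\vvvert K_{F(\boxb)}\vvvert_{2,\sigma,R^{-1}}\lesssim\|F(R^2\,\cdot)\,\chi\|_{L^2_\sigma}$ for $\sigma<(2n-1)/2$, i.e.\ one derivative of $F$ pays for one power of $(1+R\dist)$. Even granting this, it does not yield the theorem: passing to the $L^1$ kernel bound requires Cauchy--Schwarz against $(1+R\dist(\cdot,y))^{-2\sigma}$, and by \eqref{eq:srvol} this integrates to $O(R^{-Q})=O(R^{-2n})$ only when $2\sigma>Q$, i.e.\ $\sigma>n$; for $\sigma<(2n-1)/2<n$ you are left with a divergent factor $R^{\,n-\sigma}$ on each dyadic piece, and the Calder\'on--Zygmund argument needs further powers $(1+2^k r)^{-\beta}$ with $\beta>0$ on top of that. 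So this route reproduces the threshold $Q/2=n$, not $d/2$. The actual gain comes from a \emph{second, degenerate} weight $\weight(x,y)=|1-|\langle x,y\rangle|^2|^{1/2}$ (the distance to the Hopf circle through $y$), for which the Plancherel estimate $\vvvert K_{F(\sqrt\boxb)}\vvvert_{2,0,\alpha,N^{-1}}\lesssim\|F(N\,\cdot)\|_{N,2}$ holds for $\alpha<1/2$ \emph{at no cost in smoothness of $F$} (Proposition~\ref{prp:weightedplancherel}; \cite[\S 5]{casarino_spectral_sphere}). Since $(1+\dist/r)^{-\beta}(1+\weight/r)^{-\alpha}$ integrates to $O(V(r))$ as soon as $\alpha+\beta>Q$ and $\alpha<2n-2$ \eqref{eq:weightsint}, taking the free exponent just below $1=Q-d$ supplies the missing unit of integrability in the H\"older step, and the derivative cost drops from $Q/2$ to $d/2$. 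Note finally that the admissible exponent on $\weight$ is capped at $1/2$ by the spectral multiplicities $\dim\mathcal{H}_{pq}$, not by your dimension count on the $(k,m)$-lattice, which would suggest a larger gain; the resulting threshold $(2n-1)/2$ is in fact sharp (Theorem~\ref{thm:sharp}). Your argument would need to be reorganized around an estimate of this second type before the Jacobi-polynomial analysis can do its work.
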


A key feature of this result is the threshold $(2n-1)/2$ in the smoothness condition, i.e., half the topological dimension $d = 2n-1$ of the sphere $\Sphere$. In fact, 
by means of quite general theorems \cite{cowling_spectral_2001,duong_plancherel-type_2002}, it would be fairly straightforward to prove the above result under the stronger assumption ``$\|F\|_{L^\infty_{s,\sloc}} <\infty$ for some $s > Q/2$'', where $Q=2n$ is the so-called homogeneous dimension associated with the control distance for $\boxb$. The fact that $Q > d$ is connected with the lack of ellipticity of $\boxb$ (cf.\ \cite{folland_neumann_1972,fefferman_subelliptic_1983}) and the problem of obtaining sharp multiplier theorems of Mihlin--H\"ormander type for non-elliptic, subelliptic operators is still widely open (see, e.g., \cite{martini_necessary}). Most of the analysis in \cite{casarino_spectral_sphere}
is devoted to proving a ``weighted Plancherel-type estimate'' that allows weakening the assumption on the multiplier, by replacing $Q/2$ with $d/2$ (and $L^\infty_s$ with $L^2_s$).

The cases $j=0$ and $j=n-1$ are not treated in \cite{casarino_spectral_sphere}. These cases are exceptional because the orthogonal projection onto the kernel of $\boxb$ (which coincides with the Szeg\H{o} projection in the case $j=0$) is not $L^1$-bounded \cite{koranyi_singular_1971}. This constitutes a serious obstruction to the application of the ``standard machinery'' of \cite{cowling_spectral_2001,duong_plancherel-type_2002}, on which \cite{casarino_spectral_sphere} is based, and moreover puts some limits on the results that can be expected. Indeed,  when $0 < j < n-1$, the Bochner--Riesz means $(1-t\boxb)_+^\alpha$ of order $\alpha > (d-1)/2$ are $L^1$-bounded for all $t > 0$ \cite[Theorem 1.2]{casarino_spectral_sphere}. The analogous statement in the case $j\in\{0,n-1\}$ is simply false, independently of the order $\alpha$.

An alternative approach to this problem is developed in \cite{street_heat_2009}, where a multiplier theorem of Mihlin--H\"ormander type for $\boxb$ in the case $j=0$ is proved for a fairly general class of compact CR manifolds. However in \cite{street_heat_2009} the more restrictive smoothness condition ``$\| F\|_{L^2_{s,\sloc}} < \infty$ for some $s > (Q+1)/2$'' is required and the technique used seems not to yield a weak type $(1,1)$ bound.

In contrast, the main result of the present paper, which extends Theorem \ref{thm:ccms} to the missing cases $j = 0$ and $j=n-1$, requires a smoothness condition $s>d/2$ on the multiplier and includes the weak type $(1,1)$ endpoint.

\begin{thm}\label{thm:m_sphere}
Let $\boxb$ be the Kohn Laplacian on $(0,j)$-forms on the unit sphere $\Sphere$ in $\C^n$, where $j \in \{0,n-1\}$. For all bounded Borel functions $F : \R \to \C$, if $\|F\|_{L^2_{s,\sloc}} < \infty$ for some $s > (2n-1)/2$, then the operator $F(\boxb)$ is of weak type $(1,1)$ and $L^p$-bounded for all $p \in (1,\infty)$, and moreover
\[
\|F(\boxb)\|_{L^1 \to L^{1,\infty}} \leq C \|F\|_{L^2_{s,\sloc}}.
\]
\end{thm}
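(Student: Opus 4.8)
The spectral decomposition of $\boxb$ on the sphere is explicitly known: on $(0,j)$-forms, $\boxb$ acts diagonally on a countable family of eigenspaces, and its eigenvalues form an arithmetic-like sequence tending to infinity, with the kernel $\ker\boxb$ being one of the (infinite-dimensional) eigenspaces. In the exceptional cases $j\in\{0,n-1\}$, the orthogonal projection $\Pi_0$ onto $\ker\boxb$ is \emph{not} $L^1$-bounded (it is the Szeg\H{o} projection when $j=0$), so one cannot simply split $F(\boxb)=F(0)\Pi_0 + (F-F(0)\chi_{\{0\}})(\boxb)$ and treat the two pieces separately as in \cite{casarino_spectral_sphere}. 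The key structural idea is to \emph{lift} the problem: the sphere $\Sphere$ carries a transitive action of $\group{U}(n)$, and $\boxb$ can be realized inside the algebra generated by a commuting system of invariant operators $\vecU = (\opL_1,\dots,\opL_k)$ — essentially $\boxb$ together with the generator of the natural circle action and the horizontal sub-Laplacian (or closely related invariant differential operators). One writes $\boxb = \varphi(\vecU)$ for a simple polynomial-type function $\varphi$, so that $F(\boxb) = (F\circ\varphi)(\vecU)$, and one applies an abstract \emph{multivariate} Mihlin--H\"ormander theorem for the commuting system $\vecU$ (the ``abstract multivariate multiplier theorem'' advertised in the abstract). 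The point of passing to the joint calculus is that the singular behaviour of $\boxb$ at $0$ is resolved: in the joint spectrum of $\vecU$, the eigenvalue $0$ of $\boxb$ corresponds to a whole ray or half-space on which the other operators in $\vecU$ still vary, so the joint functional calculus ``sees'' enough to control the projection $\Pi_0$ after all.

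The steps I would carry out, in order, are: (i) recall the $\group{U}(n)$-equivariant spectral theory of $\boxb$ on $\Lambda^{0,j}\Sphere$ and exhibit the commuting family $\vecU$ together with the identity $\boxb=\varphi(\vecU)$ and an explicit description of the joint spectrum; (ii) establish the hypotheses of the abstract multivariate multiplier theorem for $\vecU$ — this requires a \emph{weighted Plancherel-type estimate} for the joint spectral measure of $\vecU$, in the spirit of the one-variable estimate that is the technical heart of \cite{casarino_spectral_sphere}, i.e.\ a bound of the form $\|\,|x|^\gamma\,K_{F(\vecU)}(x,\cdot)\,\|_{L^2} \lesssim r^{\gamma-Q/2}\|F(r\,\cdot)\chi\|_{L^2}$-type control, but now with a multivariate kernel and crucially with the topological dimension $d=2n-1$ rather than the homogeneous dimension $Q=2n$ appearing in the final Sobolev threshold; (iii) check that the composition $F\mapsto F\circ\varphi$ carries the one-variable condition $\|F\|_{L^2_{s,\sloc}}<\infty$, $s>(2n-1)/2$, into the multivariate Mihlin-type condition required by the abstract theorem — this is a routine but nontrivial bookkeeping computation involving the geometry of $\varphi$ and the joint spectrum; (iv) deduce the weak type $(1,1)$ bound and, by interpolation with the trivial $L^2$ bound together with duality, the full range $p\in(1,\infty)$, using a Calder\'on--Zygmund decomposition adapted to the control distance $\dist$ for $\boxb$ (for which $(\Sphere,\dist)$ is a space of homogeneous type of homogeneous dimension $Q=2n$).

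The main obstacle is step (ii): proving the weighted Plancherel estimate for the \emph{joint} spectral measure of $\vecU$ with the sharp exponent governed by $d$ rather than $Q$. In the range $0<j<n-1$ the scalar estimate of \cite{casarino_spectral_sphere} relies on the fact that the relevant eigenfunction expansions (Jacobi polynomials, disc polynomials) exhibit a dimensional gain that effectively replaces $Q/2$ by $d/2$; in the exceptional cases one must redo this analysis while keeping track of \emph{two} spectral parameters simultaneously and, most delicately, controlling the contribution of the joint-spectrum stratum lying over $\{\boxb=0\}$ — precisely the stratum that obstructs the naive scalar approach. Concretely, one needs uniform weighted $L^2$ bounds on the zonal kernels associated with the joint eigenprojections, with the weight being a power of the control distance to the appropriate submanifold, and these bounds must be summable against the multivariate Sobolev norm of $F\circ\varphi$. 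Once this estimate is in hand, the abstract machinery and the Calder\'on--Zygmund argument proceed along now-standard lines, and sharpness (the impossibility of lowering the threshold below $(2n-1)/2$) follows by testing the conclusion against the known asymptotics of the Bochner--Riesz kernels for $\boxb$, adapting the corresponding argument in \cite{casarino_spectral_sphere}.
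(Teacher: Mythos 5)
Your plan correctly identifies the central structural idea of the paper: reduce to $j=0$, observe that $\boxb$ lies in the joint functional calculus of a commuting invariant system (here the sublaplacian $L$ and $U=-2(n-1)iT$, with $2\boxb=L-U$ and the spectral comparison $\lambda^{\boxb}_{pq}\leq\lambda^L_{pq}\leq(n+1)\lambda^{\boxb}_{pq}$ away from the kernel), and invoke an abstract multivariate Mihlin--H\"ormander theorem. However, your step (ii) misplaces where the dimensional gain $Q/2\rightsquigarrow d/2$ comes from, and as formulated it is a genuine gap. You propose to prove a \emph{weighted Plancherel estimate for the joint spectral measure of the system} with exponent governed by $d$, and you identify this as the main obstacle. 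The paper does not prove, and does not need, any such multivariate Plancherel estimate. The hypotheses of the abstract multivariate theorem are only Gaussian-type (indeed merely polynomial-decay) bounds on the heat kernel of $L$ and its $U$-derivatives, and the theorem delivers only the \emph{non-sharp} conclusion: an $L^\infty$-Sobolev condition of order $>Q/2$. The sharpening to $L^2$-Sobolev of order $>d/2$ is obtained afterwards by interpolating the resulting weighted $L^2$ kernel bounds for $G(L,U)$ against the \emph{univariate} weighted Plancherel estimate for $\boxb$ itself, which is already available from \cite{casarino_spectral_sphere} (it is a statement about kernel polynomials and holds for $j=0$ as well). So the hard multivariate estimate you flag as the crux simply does not arise; conversely, if you insist on proving it, you have given no indication of how, and nothing in your outline supplies it.

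You also leave unexplained the one point where the exceptional cases genuinely differ from $0<j<n-1$: how the Calder\'on--Zygmund argument survives the $L^1$-unboundedness of the Szeg\H{o} projection. The role of the joint calculus in the paper is precisely to furnish an approximate identity $A_t=\exp(-t^2L)$ that is uniformly $L^1$-bounded (unlike any $\Phi(t\sqrt{\boxb})$, which is forbidden) while still being a function of the commuting system, so that $F(\sqrt{\boxb})(1-A_t)=G(L,U)$ for an explicit $G$ and the averaged H\"ormander condition of Duong--McIntosh can be verified. Your phrase that the joint calculus ``sees enough to control $\Pi_0$'' gestures at this but does not identify the mechanism; without it, step (iv) cannot be carried out as written. (Your closing remark on sharpness is also off: the paper proves it by transplantation to the Heisenberg group and the lower bounds of \cite{martini_necessary}, not by testing Bochner--Riesz kernels, though sharpness is not part of the statement at hand.)
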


Note that this implies the weak type $(1,1)$ and $L^p$-boundedness for $p \in (1,\infty)$ of the Bochner--Riesz means $(1-t\boxb)_+^\alpha$ for all $\alpha > (d-1)/2$ and $t>0$.

Our proof of Theorem \ref{thm:m_sphere} could be easily adapted to the case of Heisenberg groups, equipped with the standard strictly pseudoconvex structure. In fact, the proof there would be somehow easier because of the translation-invariance and homogeneity of $\boxb$ on Heisenberg groups. However, there is no need to do this, in the sense that the result on the Heisenberg group can be directly obtained from the corresponding result on the sphere by transplantation.

\begin{cor}\label{cor:m_heisenberg}
Theorems \ref{thm:ccms} and \ref{thm:m_sphere} hold also when the sphere $\Sphere$ is replaced by the $(2n-1)$-dimensional Heisenberg group $\Heis_{n-1}$.
\end{cor}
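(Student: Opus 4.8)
The plan is to deduce the statement for $\Heis_{n-1}$ from its counterpart for $\Sphere$ by transplantation, combining the Cayley transform with the automorphic dilations of $\Heis_{n-1}$. Let $\mathcal{C} \colon \Heis_{n-1} \to \Sphere \setminus \{p_0\}$ be the Cayley transform, a CR diffeomorphism onto the complement of a point; it induces bundle isomorphisms $\Lambda^{0,j}\Heis_{n-1} \cong \mathcal{C}^*\Lambda^{0,j}\Sphere$ intertwining the two $\bar\partial_b$ operators, and it is conformal for the underlying contact sub-Riemannian structures $g_{\Sphere}$, $g_{\Heis}$, i.e.\ $\mathcal{C}^* g_{\Sphere} = \psi^2 g_{\Heis}$ for a positive $\psi \in C^\infty(\Heis_{n-1})$ that is bounded above and below on every ball but degenerates as $p_0$ is approached. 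Since $\{p_0\}$ is null, composing pullback along $\mathcal{C}$ with multiplication by the square root of the associated Jacobian yields a unitary isomorphism $L^2(\Lambda^{0,j}\Sphere) \cong L^2(\Lambda^{0,j}\Heis_{n-1})$.

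Next I would realize $\Heis_{n-1}$ as the CR tangent cone of $\Sphere$ at $p_0$. Writing $\Phi_r \defeq \mathcal{C} \circ \delta_r$ for the Heisenberg dilation $\delta_r$ ($r > 0$) followed by $\mathcal{C}$, and letting $U_r \colon L^2(\Lambda^{0,j}\Sphere) \to L^2(\Lambda^{0,j}\Heis_{n-1})$ be the associated unitary, I would prove that $r^2\, U_r\, \boxb^{\Sphere}\, U_r^{-1} \to \boxb^{\Heis}$ in the strong resolvent sense as $r \to 0$. Indeed $\Phi_r^* g_{\Sphere} = (r\,\psi\circ\delta_r)^2\, g_{\Heis}$, and $\psi\circ\delta_r$ converges, uniformly on every ball, to the value of $\psi$ at the group identity, which we normalize to $1$; hence the principal part of $r^2 U_r \boxb^{\Sphere} U_r^{-1}$ converges locally to that of $\boxb^{\Heis}$, whereas every term of order strictly less than $2$ in the Heisenberg calculus — the curvature-type corrections reflecting that $\mathcal{C}$ is conformal rather than isometric, and the first-order terms produced by conjugating with the Jacobian — acquires, after the rescaling by $r^2$, an extra positive power of $r$ and hence vanishes in the limit. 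Testing against smooth compactly supported forms, which form a common core, then gives the strong resolvent convergence, and therefore $F(r^2 U_r \boxb^{\Sphere} U_r^{-1}) \to F(\boxb^{\Heis})$ strongly on $L^2(\Lambda^{0,j}\Heis_{n-1})$ for every bounded continuous $F$.

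The crux is that the Mihlin--H\"ormander norm is scale-invariant. Setting $G_r(\lambda) \defeq F(r^2\lambda)$, we have $\|G_r\|_{L^2_{s,\sloc}} = \|F\|_{L^2_{s,\sloc}}$ for every $r > 0$ directly from \enref{eq:mhcond}, while $F(r^2 U_r \boxb^{\Sphere} U_r^{-1}) = U_r\, G_r(\boxb^{\Sphere})\, U_r^{-1}$. Applying Theorem \ref{thm:ccms} (when $0 < j < n-1$) or Theorem \ref{thm:m_sphere} (when $j \in \{0,n-1\}$) to $G_r$ on $\Sphere$ and unravelling the conjugation by $U_r$ — where the powers of the conformal factor picked up from the Jacobian and from the scaling of the pointwise $(0,j)$-form norms under pullback balance against one another, leaving the weak type inequality scale-free — one obtains a weak type $(1,1)$ bound for $F(r^2 U_r \boxb^{\Sphere} U_r^{-1})$ on $\Heis_{n-1}$ with constant $C\|F\|_{L^2_{s,\sloc}}$ uniform in $r \in (0,r_0]$. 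Then, fixing $f$ in a dense subspace of $L^1$ and passing to a sequence $r \to 0$ along which $F(r^2 U_r \boxb^{\Sphere} U_r^{-1})f \to F(\boxb^{\Heis})f$ almost everywhere (possible since $f \in L^1 \cap L^2$ and the convergence holds in $L^2$), a Fatou-type argument on the distribution functions transfers the weak type $(1,1)$ bound to $F(\boxb^{\Heis})$; here one reduces beforehand to $F$ continuous, the contribution of a jump of $F$ at the origin being absorbed by the weak type $(1,1)$ boundedness of the Szeg\H{o} projection on $\Heis_{n-1}$. The boundedness of $F(\boxb^{\Heis})$ on $L^p$ for $1 < p < \infty$ then follows by Marcinkiewicz interpolation with the elementary bound $\|F(\boxb^{\Heis})\|_{L^2 \to L^2} \leq \|F\|_{L^\infty}$ and by duality, since $F(\boxb^{\Heis})^* = \overline{F}(\boxb^{\Heis})$ and $\|\overline{F}\|_{L^2_{s,\sloc}} = \|F\|_{L^2_{s,\sloc}}$.

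The main obstacle lies in the two places where the conformal factor $\psi$, which degenerates near $p_0$, obstructs an outright equivalence. First, the convergence step requires pinning down precisely how $\boxb$ on $(0,j)$-forms transforms under the Cayley transform — the correct conformal weight on $\Lambda^{0,j}$, the contribution of the Jacobian to the formal adjoint $\bar\partial_b^*$, and the exact form of the lower-order remainder — and then checking that under $\delta_r$ every term of order below $2$ genuinely carries a positive power of $r$, so that no spurious zeroth-order term survives in the limit. Second, $U_r$ fails to be globally $L^p$-isometric, so the unravelling of the conjugation in the weak type estimate must be localized: for $f$ of fixed support, $G_r(\boxb^{\Sphere})(U_r^{-1}f)$ has its $L^1$-mass concentrated, at scale $r$, near $\Phi_r(\supp f)$, which lies in a shrinking neighbourhood of a fixed point of $\Sphere$ far from $p_0$, where $\psi\circ\mathcal{C}^{-1}$ is essentially constant; the tails, over which the conformal weight fluctuates, are then handled using the off-diagonal $L^2$ decay of the multiplier operators already established in the proofs of Theorems \ref{thm:ccms} and \ref{thm:m_sphere}. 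Once these points are settled, the remaining estimates are routine.
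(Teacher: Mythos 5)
Your proposal follows essentially the same route as the paper: both realize $\boxb$ on the Heisenberg group as a strong resolvent limit of parabolically rescaled copies of $\boxb^{\Sphere}$ concentrated near a point, exploit the invariance of $\|\cdot\|_{L^2_{s,\sloc}}$ under $F \mapsto F(r^2\,\cdot)$, transfer the weak type $(1,1)$ bound by a Fatou-type argument on distribution functions, treat the value $F(0)$ separately via the Szeg\H{o} projection, and finish by interpolation and duality. Two remarks on where the treatments diverge, both concerning the points you yourself flag as the ``main obstacle''. First, the paper never uses the Cayley transform: Theorem \ref{thm:transplantation} is formulated for an arbitrary coordinate chart and orthonormal frame near the chosen point, with the operator truncated by multiplication operators $P_U$; since both $f\circ\delta_r^{-1}$ and the compact set on which the distribution function is measured are carried by the rescaled chart into a shrinking neighbourhood of the base point, the only correction is the ratio of the measure density there, which tends to $1$. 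Consequently your second obstacle (the global non-isometry of $U_r$ and the degeneration of the conformal factor at $p_0$) simply does not arise, and no off-diagonal decay of the multiplier operators is needed. Second, your first obstacle --- writing $\boxb$ in coordinates, checking that every term of Heisenberg order strictly less than $2$ acquires a positive power of $r$ under $\delta_r$, and identifying the limit with $\boxb$ on $\Heis_{n-1}$ when all Levi eigenvalues equal $1$ --- is exactly the content of Proposition \ref{prp:boxb_local}, which the paper extracts from the Beals--Greiner calculus rather than from the conformal transformation law under the Cayley transform; one must also record that the limit operator is essentially self-adjoint on $C^\infty_c$ (it is, being built from left-invariant vector fields on a nilpotent group) before Kato's perturbation theorems upgrade convergence on test sections to strong resolvent convergence, as in Corollary \ref{cor:transference_thr}. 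With these two points supplied, your argument closes and is, in substance, the paper's.
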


The idea of transplanting estimates from complex spheres to Heisenberg groups has been used several times in the literature (see, e.g., \cite{ricci_transferring_1986,dooley_contraction_1999,dooley_transferring_2002,casarino_transferring_2009}). Here however we propose a different approach, along the lines of \cite{kenig_divergence_1982}, which 
 does not require any group or symmetric space structure on the manifold, or group-invariance of the operator.
This general transplantation technique (Theorem \ref{thm:transplantation}) applies to arbitrary self-adjoint differential operators on a vector bundle over a smooth manifold and allows transplanting weak type as well as strong type bounds.

The same technique, combined with an argument of \cite{martini_necessary}, yields the sharpness of the above multiplier theorems. In fact, thanks to the analyisis of \cite{beals_calculus_1988}, we can prove a more general result for the Kohn Laplacian on any non-Levi-flat CR manifold of hypersurface type (see \cite{beals_calculus_1988,boggess_cr_1991,dragomir_differential_2006} for definitions).

For a general non-negative selfadjoint operator $\opL$ on $L^2(\FE)$ as in \eqref{eq:spres}, define the sharp Mihlin--H\"ormander threshold $\thr(\opL)$ as the infimum of the $s \in (0,\infty)$ such that
\[
\exists C \in (0,\infty) \tc \forall F \in \mathfrak{B} \tc \|F(\opL)\|_{L^{2} \to L^2} + \|F(\opL)\|_{L^{1} \to L^{1,\infty}} \leq C \, \| F \|_{L^2_{s,\sloc}},
\]
where $\mathfrak{B}$ is the set of bounded Borel functions $F : \R \to \C$. Clearly
\[
\thr(\opL) \geq \thr_-(\opL),\]
where $\thr_-(\opL)$ 
is the infimum of the $s \in (0,\infty)$ such that
\[
\forall p \in (1,\infty) \tc \exists C \in (0,\infty) \tc \forall F \in \mathfrak{B} \tc \|F(\opL)\|_{L^p \to L^p} \leq C \, \| F \|_{L^\infty_{s,\sloc}}.
\]
Note that Theorems \ref{thm:ccms} and \ref{thm:m_sphere} and Corollary \ref{cor:m_heisenberg} can be restated as follows:
\[
\thr(\boxb) \leq (2n-1)/2
\]
for the Kohn Laplacian $\boxb$ on a sphere or Heisenberg group of dimension $2n-1$.

\begin{thm}\label{thm:sharp}
Let $M$ be a non-Levi-flat CR manifold of hypersurface type and dimension $2n-1$, 
with a compatible hermitian metric. Let $\boxb$ be any self-adjoint extension of the Kohn Laplacian on $(0,j)$-forms on $M$, where $0 \leq j \leq n-1$. Then
\[
\thr_-(\boxb) \geq (2n-1)/2.
\]
In particular, Theorems \ref{thm:ccms} and \ref{thm:m_sphere} and Corollary \ref{cor:m_heisenberg} are sharp.
\end{thm}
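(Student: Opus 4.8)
The plan is to prove the lower bound $\thr_-(\boxb) \ge (2n-1)/2$ by a localisation argument: at a suitable point of $M$ the Kohn Laplacian is modelled on a homogeneous operator on a two-step stratified Lie group of topological dimension $2n-1$; using the transplantation Theorem \ref{thm:transplantation} one transports any hypothetical Mihlin--H\"ormander estimate for $\boxb$ to this model, and there one contradicts the sharp necessary condition furnished by the argument of \cite{martini_necessary}. The identification of the model rests on the Beals--Greiner calculus \cite{beals_calculus_1988}. The last assertion of the theorem follows at once, since the sphere $\Sphere$ and the Heisenberg group $\Heis_{n-1}$ are non-Levi-flat CR manifolds of dimension $2n-1$: combined with Theorems \ref{thm:ccms} and \ref{thm:m_sphere} and Corollary \ref{cor:m_heisenberg}, the lower bound yields $\thr(\boxb) = \thr_-(\boxb) = (2n-1)/2$ for the Kohn Laplacian there, so the threshold $(2n-1)/2$ in those statements is optimal.

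First I would fix the base point and identify the model. Since $M$ is not Levi-flat, its Levi form does not vanish identically; choose $p_0 \in M$ where the rank $r$ of the Levi form is maximal, so that $1 \le r \le n-1$ and, by lower semicontinuity of the rank, the Levi form has constant rank $r$ throughout a neighbourhood of $p_0$. Then the osculating (nilpotent) approximation of $\boxb$ at $p_0$ is well defined, and by the analysis of \cite{beals_calculus_1988} there exist local coordinates centred at $p_0$ and a local unitary frame of $\Lambda^{0,j}M$ in which $\boxb$ equals $\opL_0$ plus a remainder of strictly lower order in the anisotropic Heisenberg scaling; here $\opL_0$ is the Kohn Laplacian on $(0,j)$-forms on the group $G = \Heis_r \times \R^{2(n-1-r)}$, the Euclidean factor accounting for the null directions of the Levi form. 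Thus $G$ is a two-step stratified Lie group of topological dimension $(2r+1)+2(n-1-r) = 2n-1$, and in the chosen frame $\opL_0$ is a finite orthogonal direct sum of scalar self-adjoint left-invariant operators on $G$, each homogeneous of degree $2$ for the natural dilations and of the form $\mathcal{L}_\alpha \otimes \id + \id \otimes (-\Delta_{\R^{2(n-1-r)}})$ with $\mathcal{L}_\alpha$ a shifted sub-Laplacian on $\Heis_r$.

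Next I would carry out the transplantation. Suppose, for a contradiction, that $\thr_-(\boxb) < (2n-1)/2$, and fix $s < (2n-1)/2$ for which, for every $p \in (1,\infty)$, there is $C_p$ with $\|F(\boxb)\|_{L^p \to L^p} \le C_p \, \|F\|_{L^\infty_{s,\sloc}}$ for all $F \in \mathfrak{B}$. Applying Theorem \ref{thm:transplantation} in a neighbourhood of $p_0$ --- legitimate since $\boxb$ is a self-adjoint second-order differential operator on the hermitian bundle $\Lambda^{0,j}M$ which in the above coordinates is a perturbation of the model operator $\opL_0$ on $G$ --- transfers these bounds to $\opL_0$, and, using the homogeneity of $\opL_0$, the resulting local estimates globalise to $\|F(\opL_0)\|_{L^p \to L^p} \le C'_p \, \|F\|_{L^\infty_{s,\sloc}}$ for all $p \in (1,\infty)$; hence $\thr_-(\opL_0) \le s < (2n-1)/2$. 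On the other hand, $\opL_0$ has an orthogonal direct summand which is a homogeneous self-adjoint left-invariant operator on the two-step stratified group $G$ of topological dimension $2n-1$, and the argument of \cite{martini_necessary} shows that any such operator has $\thr_- \ge (\dim G)/2 = (2n-1)/2$; since passing to an orthogonal direct summand cannot increase $\thr_-$, this gives $\thr_-(\opL_0) \ge (2n-1)/2$, a contradiction. Therefore $\thr_-(\boxb) \ge (2n-1)/2$.

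The step I expect to be the main obstacle is the reduction to the model, namely making sure that Theorem \ref{thm:transplantation} and the necessary condition of \cite{martini_necessary} can actually be brought to bear on $\boxb$ and its local expression. The Beals--Greiner calculus only records the coincidence of the Heisenberg principal symbols of $\boxb$ and $\opL_0$ at $p_0$; one must upgrade this to sufficiently uniform control on the remainder $\boxb - \opL_0$, in the appropriate anisotropic sense and across all relevant scales, so that it affects neither the transplantation nor the necessary-condition argument, and one must carry this out in the bundle-valued setting and for the whole family of $L^p$ bounds entering the definition of $\thr_-$. A minor technical point is that the osculating group $G$, and hence $\opL_0$, is unambiguously attached only to a point of locally constant Levi rank, which is why $p_0$ is taken where the rank is maximal; and, since the entire argument is local at the interior point $p_0$, it is insensitive to the choice of self-adjoint extension of $\boxb$.
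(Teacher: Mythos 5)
Your architecture is the paper's: localise at a point where the Levi form does not vanish, identify the Beals--Greiner osculating model of $\boxb$ as a direct sum of homogeneous left-invariant operators on a $2$-step stratified group of topological dimension $2n-1$, transfer the hypothetical Mihlin--H\"ormander bounds to the model via Theorem \ref{thm:transplantation} (this is exactly Corollary \ref{cor:transference_thr}, so no separate ``globalisation by homogeneity'' step is needed), and then contradict a necessary condition on the model. Two of the obstacles you flag are in fact absorbed by the framework already in place: Definition \ref{dfn:localmodel} only asks for strong $L^2$-convergence of the anisotropically rescaled operators on test functions, which Proposition \ref{prp:boxb_local} extracts directly from the Beals--Greiner expressions at an \emph{arbitrary} point (no constant-rank or maximal-rank hypothesis is needed, and no quantitative control of the remainder across scales), and Kato's perturbation theory then converts this into strong convergence of the functional calculi.

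The one genuine gap is your invocation of \cite{martini_necessary}. As stated there, the lower bound $\thr_-\geq d/2$ is proved for homogeneous \emph{sublaplacians}, i.e.\ for $V=0$; it is certainly false for ``any homogeneous self-adjoint left-invariant operator'' on the group (consider functions of the central derivative alone). The model summands here are $\boxb^{y,J}=\tfrac14(\opL-iV)$ with $\opL=-\sum_k U_k^2$ and $V$ a generically nonzero vector field in the second layer, so the stationary-phase analysis of \cite{martini_necessary} must be rerun with the extra oscillatory factor $e^{i(1-sr)\langle\mu,\beta\rangle}$ coming from $V$, under the positivity constraint $\opL\geq iV$ (which is what guarantees the relevant spectral localisation still makes sense, and which holds here because the local model of a nonnegative operator is nonnegative). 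This adaptation is the content of Proposition \ref{prp:sublap_lowerbound} and is not automatic; your proof needs it, or at least needs to note that the phase $\Psi$ and the kernel asymptotics of \cite{martini_necessary} survive the shift by $\beta\cdot\vecU$. With that supplied, the rest of your argument (the reduction to a single orthogonal summand, the insensitivity to the self-adjoint extension, and the deduction of sharpness) is correct and matches the paper.
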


Apart from sharpness and weak type endpoint, another reason of interest for Theorem \ref{thm:m_sphere} is the technique used in its proof. 
To prove that $F(\boxb)$ is of weak type $(1,1)$, here we show that $F(\boxb)$ is a singular integral operator, satisfying the ``averaged H\"ormander condition'' of \cite[Theorem 1]{duong_singular_1999} (see also \cite[Theorem 3.3]{cowling_spectral_2001}):
\begin{equation}\label{eq:gencz}
\sup_{r > 0} \esssup_{y \in \Sphere} \int_{\dist(x,y) \geq r} |K_{F(\boxb) (I-A_r)}(x,y)| \,d\mu(x) < \infty,
\end{equation}
where $\mu$ is the standard measure on $\Sphere$, $\dist$ is the control distance for $\boxb$, $K_T$ denotes the integral kernel of an operator $T$ and $\{A_r\}_{r>0}$ is some ``approximate identity'' (as $r \downarrow 0$) satisfying, among other things, the uniform bound
\begin{equation}\label{eq:uniforml1bound}
\sup_{r>0} \|A_r\|_{1 \to 1} < \infty.
\end{equation}
In other works on spectral multipliers, this approximate identity is constructed as a function of the operator $\opL$ under consideration, such as the ``heat propagator'' $A_r = \exp(-r^2 \opL)$ (see, e.g., \cite{duong_plancherel-type_2002}) or $A_r = \Phi(r \sqrt{\opL})$ for a suitable	Schwartz function $\Phi$ with $\Phi(0)=1$ (cf.\ \cite{cowling_spectral_2001}).
However such choices of $A_r$ are forbidden in the case $\opL = \boxb$ and $j\in \{0,n-1\}$, because the $L^1$-unboundedness of the Szeg\H{o} projection is incompatible with \eqref{eq:uniforml1bound}.

We are then led to looking for an approximate identity $A_r$ outside the functional calculus of $\boxb$, yet somehow related to it, so to be able to prove \eqref{eq:gencz}. Here comes a key observation: in the case $j=0$, the operator $\boxb$ belongs to the joint functional calculus of two commuting differential operators on $\Sphere$, namely, a sublaplacian $L$ and a unit vector field $T$. Differently from $\boxb$, the sublaplacian $L$ does satisfy Gaussian-type heat kernel estimates, so $A_r = \exp(-r^2 L)$ satisfies \eqref{eq:uniforml1bound} (indeed a sharp multiplier theorem for the sublaplacian $L$ has been proved in \cite{cowling_spectral_2011}). In fact, more is true: the $T$-derivatives of the heat kernel of $L$ satisfy Gaussian-type estimates too. Based on these estimates, we can prove a spectral multiplier theorem of Mihlin--H\"ormander type for the joint functional calculus of $L$ and $iT$, which in turn (in combination with the weighted Plancherel-type estimates from \cite{casarino_spectral_sphere}) allows us to derive \eqref{eq:gencz} with $A_r = \exp(-r^2 L)$, 
 whenever $\|F\|_{L^2_{s,\sloc}} < \infty$ for some $s>d/2$.

Spectral multiplier theorems for systems of commuting operators are not new in the literature. Actually, the classical Mihlin--H\"ormander theorem for Fourier multipliers on $\R^d$ can be thought of as a spectral multiplier theorem for the joint functional calculus of the partial derivatives on $\R^d$.
However, in settings other than $\R^d$, most of the available results  (see, e.g., \cite{mller_marcinkiewicz_1995,mller_marcinkiewicz_1996,fraser_marcinkiewicz_1997,veneruso_marcinkiewicz_2000,wrobel_thesis_2014,duong_endpoint_2015,chen_marcinkiewicz_preprint2015}) are of ``Marcinkiewicz type'', i.e., they impose on the multiplier function a condition that is invariant by multiparameter rescaling, and the correspondingly obtained estimates appear not to be suitable to prove a weak type $(1,1)$ bound. Exceptions to this 
are the results of \cite{sikora_multivariable_2009} and \cite[\S 4]{martini_joint_2012}, where 
a one-parameter rescaling of the multiplier is considered and weak type (1,1) estimates are obtained; however these results are not directly applicable to our system $L,iT$ on the sphere, because \cite{sikora_multivariable_2009} only applies to a product setting,
whereas \cite[\S 4]{martini_joint_2012} applies to left-invariant homogeneous operators on a homogeneous Lie group (this would be enough to deal with the Heisenberg group, but not the sphere).

For this reason, in Section \ref{section:joint} we develop an abstract version of \cite[\S 4]{martini_joint_2012} in the context of doubling metric measure spaces,
 which includes the main result of \cite{sikora_multivariable_2009} as a particular case and could be of independent interest. When applied to a single operator $\opL$, the result of Section \ref{section:joint} essentially reduces to the main result of \cite{hebisch_functional_1995}, where only polynomial decay (of arbitrarily high order) is required on the heat kernel of $\opL$, in place of the usual Gaussian-type exponential decay.
Because of the general character of the argument, we have tried to put minimal assumptions on the system of commuting operators, in order to obtain a statement that encompasses many different situations; we refer to Section \ref{section:joint} for an extensive discussion and examples.
 The resulting multivariate spectral multiplier theorem of Mihlin--H\"ormander type (Theorem \ref{thm:multipliers}) is sufficiently strong to serve as a base for our sharp Theorem \ref{thm:m_sphere} and we expect that other similar applications can be found in the future.

\smallskip

Some general remarks about notation are in order. The letter $C$ and variants such as $C_p$ will denote a finite positive quantity that may change from place to place. For any two quantities $A,B$, we also write $A \lesssim B$ instead of $A \leq C B$; moreover $A \sim B$ is the same as ``$A \lesssim B$ and $B \lesssim A$''. $\chr_{U}$ denotes the characteristic function of a set $U$.

\subsection*{Acknowledgments}
The author wishes to thank Valentina Casarino, Michael G.\ Cowling, and Adam Sikora for several discussions on the subject of this work, and Jonathan Bennett for suggestions about the presentation.

The author gratefully acknowledges the support of the Deutsche Forschungsgemeinschaft (project MA 5222/2-1) during his stay at the Christian-Albrechts-Universit\"at zu Kiel (Germany), where this work was initiated. The support of GNAMPA (Progetto 2014 ``Moltiplicatori e proiettori spettrali associati a Laplaciani su sfere e gruppi nilpotenti'' and Progetto 2015 ``Alcune questioni di analisi armonica per operatori differenziali non ellittici'') is also gratefully acknowledged.

\section{Unitary group action and joint spectral decomposition}

This and the next two sections are devoted to the proof of Theorem \ref{thm:m_sphere}. Indeed we need only to discuss the case $j=0$, i.e., the case of the Kohn Laplacian $\boxb$ acting on (scalar-valued) functions on the sphere. In fact, by means of Geller's Hodge star operator (\cite[p.\ 5]{geller_laplacian_1980}; see also \cite[Remark 4.6]{casarino_spectral_sphere}), it is easily seen that the case $j=n-1$ in Theorem \ref{thm:m_sphere} can be reduced to the case $j=0$. 

The Kohn Laplacian $\boxb$ on $\Sphere$ is invariant by the action of the unitary group $\group{U}(n)$. It is therefore natural to exploit the representation theory of $\group{U}(n)$ for the analysis of $\boxb$, as it has been done in great detail in \cite{folland_tangential_1972}. Here we just recall the main results that will be of use later.

As it is well-known (see, e.g., \cite{folland_tangential_1972,cowling_spectral_2011}), the decomposition into irreducible representations of the natural representation of $\group{U}(n)$ on $L^2(\Sphere)$ is multiplicity-free and is given by
\[
L^2(\Sphere) = \bigoplus_{p,q \geq 0} \mathcal{H}_{pq},
\]
where $\mathcal{H}_{pq}$ is the space of $(p,q)$-bihomogeneous complex spherical harmonics (denoted by $\Phi_{pq0}$ in \cite{folland_tangential_1972}). By Schur's Lemma, all $\group{U}(n)$-equivariant operators $R$ on $L^2(\Sphere)$ preserve this decomposition and are scalar when restricted to each $\mathcal{H}_{pq}$̧, i.e., $R|_{\mathcal{H}_{pq}} = \lambda_{pq}^R \id_{\mathcal{H}_{pq}}$ for some $\lambda_{pq}^R \in \C$. In particular, all such operators commute.

Let the sublaplacian $L$ and the unit vector field $T$ on $\Sphere$ be defined as in \cite{cowling_spectral_2011} and set $U=-2(n-1)iT$. The operators $\boxb,L,U$ are $\group{U}(n)$-equivariant, therefore they have a joint spectral decomposition in terms of complex spherical harmonics.

\begin{prp}\label{prp:spectralinfo}
For all $p,q \in \N$,
\begin{align}
\label{eq:dim} \dim \mathcal{H}_{pq} &= \frac{p+q+n-1}{n-1} \binom{p+n-2}{n-2} \binom{q+n-2}{n-2} \\
\label{eq:avL} \lambda_{pq}^L &= 4pq + 2(n -1)(p + q), \\
\label{eq:avU} \lambda_{pq}^U &= 2(n-1)(p-q),\\
\label{eq:avB} \lambda_{pq}^{\boxb} &= 2q(p + n-1).
\end{align}
In particular
\begin{equation}\label{eq:oprelation}
2\boxb = L-U,
\end{equation}
and moreover, for all $p,q\in \N$,
\begin{equation}\label{eq:avestimate}
\lambda_{pq}^{\boxb} \neq 0 \quad\Rightarrow\quad \lambda_{pq}^{\boxb} \leq \lambda_{pq}^L \leq (n+1) \lambda_{pq}^{\boxb}.
\end{equation}
\end{prp}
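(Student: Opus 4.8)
The plan is to derive all the stated formulas from the known action of $\group{U}(n)$ on $L^2(\Sphere)$ and the explicit description of the relevant operators in \cite{folland_tangential_1972,cowling_spectral_2011}. First I would recall that on $\mathcal{H}_{pq}$ the scalars $\lambda_{pq}^{\boxb}$, $\lambda_{pq}^L$, and $\lambda_{pq}^U$ are all determined by testing the operators on a single convenient highest-weight vector in each $\mathcal{H}_{pq}$; a standard choice is the monomial $z_1^p \bar z_1^q$ (suitably restricted to $\Sphere$), on which $L$, $T$ (equivalently $U$), and $\boxb$ act diagonally. For $\lambda_{pq}^{\boxb}$ one uses that $\boxb = -\sum_j \bar Z_j Z_j$ on functions (with $Z_j, \bar Z_j$ the tangential CR vector fields associated with the standard structure), plus the commutation relations on $\Sphere$; the formula $\lambda_{pq}^{\boxb} = 2q(p+n-1)$ is precisely Folland's computation in \cite{folland_tangential_1972}, so the bulk of the work is to cite it and to fix the normalization constants so that $L$ and $T$ match the conventions of \cite{cowling_spectral_2011}. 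The dimension formula \eqref{eq:dim} is the classical count of $(p,q)$-bihomogeneous harmonic polynomials on $\C^n$ and can simply be quoted.

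Next, with \eqref{eq:avL}, \eqref{eq:avU}, \eqref{eq:avB} in hand, the identity \eqref{eq:oprelation} is a purely algebraic check: one verifies
\[
\lambda_{pq}^L - \lambda_{pq}^U = \bigl(4pq + 2(n-1)(p+q)\bigr) - 2(n-1)(p-q) = 4pq + 4(n-1)q = 2\cdot 2q(p+n-1) = 2\lambda_{pq}^{\boxb},
\]
and since the three operators are simultaneously diagonalized by the $\mathcal{H}_{pq}$, the scalar identity on every block upgrades to the operator identity $2\boxb = L - U$. Here I should be a little careful about domains — $\boxb$, $L$, $U$ are all unbounded — but since they are all diagonal with respect to the same orthogonal decomposition into finite-dimensional eigenspaces, the identity holds on the natural common core (finite linear combinations of spherical harmonics) and extends to the appropriate self-adjoint realizations.

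Finally, for the two-sided bound \eqref{eq:avestimate}: assume $\lambda_{pq}^{\boxb} = 2q(p+n-1) \neq 0$, i.e. $q \geq 1$. The lower bound $\lambda_{pq}^{\boxb} \leq \lambda_{pq}^L$ is immediate since $\lambda_{pq}^L - \lambda_{pq}^{\boxb} = \lambda_{pq}^{\boxb} + \lambda_{pq}^U - \lambda_{pq}^{\boxb}$... more directly, $\lambda_{pq}^L = 4pq + 2(n-1)(p+q) \geq 4pq + 2(n-1)\cdot 2q \cdot\frac{p+n-1}{p+n-1}$; cleanly, $\lambda_{pq}^L - \lambda_{pq}^{\boxb} = 4pq + 2(n-1)(p+q) - 2q(p+n-1) = 2pq + 2(n-1)p = 2p(q+n-1) \geq 0$. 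For the upper bound one must show $4pq + 2(n-1)(p+q) \leq (n+1)\cdot 2q(p+n-1)$, i.e. (dividing by $2$ and using $q\geq 1$) a polynomial inequality in $p,q$ that I would verify by elementary manipulation, isolating the terms and using $q \geq 1$, $p \geq 0$, $n \geq 2$; the worst case is $q=1$, $p$ large, where the leading terms are $2p$ on the left against $2(n+1)p$ on the right, so the inequality is comfortable. The main obstacle is not any single step but rather bookkeeping: pinning down the exact normalizations of $L$, $T$, $U$ from \cite{cowling_spectral_2011} and of $\boxb$ from \cite{folland_tangential_1972,casarino_spectral_sphere} so that all four displayed eigenvalue formulas are mutually consistent; once the constants are fixed, everything else is a short computation.
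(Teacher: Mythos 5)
Your proposal is correct and follows essentially the same route as the paper: the formulas \eqref{eq:dim}--\eqref{eq:avB} are quoted from \cite{cowling_spectral_2011} and \cite{folland_tangential_1972}, and \eqref{eq:oprelation} and \eqref{eq:avestimate} are then obtained by the elementary eigenvalue computations you indicate (your algebra $\lambda_{pq}^L-\lambda_{pq}^U=4q(p+n-1)$ and $\lambda_{pq}^L-\lambda_{pq}^{\boxb}=2p(q+n-1)$ checks out, and the upper bound reduces to $(n+1)\lambda_{pq}^{\boxb}-\lambda_{pq}^L=2(n-1)(p(q-1)+nq)\geq 0$ for $q\geq 1$). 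The additional remarks about verifying the eigenvalues on highest-weight vectors and about domains for the operator identity are fine but not needed beyond what the paper does.
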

\begin{proof}
\eqref{eq:dim} is in \cite[Corollary 2.6]{cowling_spectral_2011}, \eqref{eq:avL} and \eqref{eq:avU} are in \cite[Section 4]{cowling_spectral_2011}, while \eqref{eq:avB} is in \cite[Theorem 6]{folland_tangential_1972}. From these expressions we immediately obtain \eqref{eq:avestimate} and
\[
2\lambda_{pq}^{\boxb} = \lambda_{pq}^L - \lambda_{pq}^U,
\]
whence \eqref{eq:oprelation}.
\end{proof}

\section{Heat kernel estimates and a non-sharp multiplier theorem}

Let $\mu$ be the standard hypersurface measure on $\Sphere \subseteq \C^n$ and $\dist$ denote the control distance for $\boxb$. We refer to \cite[Section 3]{casarino_spectral_sphere} for precise definitions and discussion of their main properties. Here we just recall that $\dist$ is $\group{U}(n)$-invariant and
\begin{equation}
\dist(z,w) \sim |1-\langle z,w\rangle|^{1/2},
\end{equation}
where $\langle \cdot , \cdot \rangle$ denotes the standard Hermitian inner product on $\C^n$.
Moreover, if $V(r)$ denotes the $\mu$-measure of any $\dist$-ball of radius $r \in [0,\infty)$, then
\begin{equation}\label{eq:srvol}
V(r) \sim \min\{1,r^Q\},
\end{equation}
where $Q = 2n$. In the language of Section \ref{section:joint}, this tells us that $(\Sphere,\dist,\mu)$ is a doubling metric measure space of homogeneous dimension $Q$ and displacement parameter $0$.

Let us introduce some weighted mixed Lebesgue norms for Borel functions $K : \Sphere \times \Sphere \to \C$, that will be repeatedly used: for all $p \in [1,\infty]$ and $\beta \in \R$,
\[
\vvvert K\vvvert_{p,\beta,r} = \esssup_y  V(r)^{1/p'} \|K(\cdot,y) \, (1+\dist(\cdot,y)/r)^\beta \|_{L^p(\Sphere)}.
\]
Here $p' = p/(p-1)$ denotes the conjugate exponent to $p$.

As already mentioned in the introduction, a key ingredient of our proof is the fact that the heat kernel of $L$, together with its $U$-derivatives, satisfies Gaussian-type estimates.

\begin{prp}\label{prp:gaussianestimates}
There exists $b>0$ such that, for all $k \in \N$, there exists $C_k > 0$ such that, for all $t > 0$ and $x,y \in \Sphere$,
\[
|K_{U^k e^{-tL}}(x,y)| \leq C_k \, t^{-k} \, V(t^{1/2})^{-1} \exp(-b \,\dist(x,y)^2/t).
\]
\end{prp}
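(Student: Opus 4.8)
The plan is to reduce the statement to a known Gaussian estimate for the heat kernel of $L$ alone (the case $k=0$), together with a way of converting $U$-derivatives into extra decay in $t$ and preservation of the Gaussian profile. The key structural facts I would exploit are: (i) $L$ and $U$ commute and are both $\group{U}(n)$-equivariant, so they act as scalars on each $\mathcal{H}_{pq}$, with $\lambda_{pq}^L = 4pq + 2(n-1)(p+q)$ and $\lambda_{pq}^U = 2(n-1)(p-q)$; (ii) by \eqref{eq:srvol}, $(\Sphere,\dist,\mu)$ is doubling of homogeneous dimension $Q=2n$; and (iii) the sublaplacian $L$ on $\Sphere$ satisfies a Gaussian heat kernel bound $|K_{e^{-tL}}(x,y)| \lesssim V(t^{1/2})^{-1} \exp(-b\,\dist(x,y)^2/t)$, which is standard (it follows, e.g., from the sub-Riemannian structure and the results underpinning \cite{cowling_spectral_2011}, or from finite propagation speed for $\sqrt{L}$ plus on-diagonal bounds). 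The central observation is the algebraic inequality $|\lambda_{pq}^U| \le \lambda_{pq}^L$, which follows from \eqref{eq:avL}--\eqref{eq:avU} since $2(n-1)|p-q| \le 4pq + 2(n-1)(p+q)$; more generally, for any $k\in\N$, $|\lambda_{pq}^U|^k e^{-t\lambda_{pq}^L/2} \lesssim_k t^{-k} e^{-t\lambda_{pq}^L/4}$, because the function $s \mapsto s^k e^{-ts/2}$ is bounded by $C_k t^{-k}$ on $[0,\infty)$ and $|\lambda_{pq}^U|^k \le (\lambda_{pq}^L)^k$.

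Concretely, I would write $U^k e^{-tL} = \mathfrak{m}_k(L,U)\, e^{-tL/2} \cdot e^{-tL/2}$ where $\mathfrak{m}_k(\lambda,\nu) = \nu^k e^{-t\lambda/2}$, and observe that, on the joint spectrum (the pairs $(\lambda_{pq}^L,\lambda_{pq}^U)$), one has $|\mathfrak{m}_k(\lambda_{pq}^L,\lambda_{pq}^U)| \lesssim_k t^{-k} e^{-t\lambda_{pq}^L/4}$. Hence $U^k e^{-tL}$ is, up to the scalar factor $C_k t^{-k}$, a contraction-dominated modification of $e^{-3tL/4}$ in the following sense: its kernel is controlled by composing the kernel of a bounded operator (bounded on $L^2$, with norm $\lesssim_k t^{-k}$, by the spectral theorem and the above pointwise bound on $\mathfrak{m}_k$) with the Gaussian kernel of $e^{-tL/4}$. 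To turn an $L^2$ operator norm bound into a pointwise kernel bound with the correct Gaussian profile, I would use the by-now classical scheme (as in \cite{cowling_spectral_2001,duong_plancherel-type_2002}): finite propagation speed for the wave operator $\cos(t\sqrt{L})$ gives that $f(\sqrt{L})$ has kernel supported near the diagonal when $\hat f$ is compactly supported; writing $\mathfrak{m}_k(L,U) e^{-tL/2}$ via the Fourier inversion of $e^{-tL/2}$ and exploiting the rapid decay of the relevant Fourier transforms, one obtains off-diagonal Gaussian decay, while the on-diagonal size is controlled by $V(t^{1/2})^{-1}$ through the doubling property and an $L^1\to L^\infty$ (or $L^2\to L^2$ weighted) estimate. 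Alternatively, and perhaps more cleanly, one can invoke directly that Gaussian bounds for $e^{-tL}$ are preserved under the functional calculus by Schwartz-class multipliers that are "adapted to scale $t$", and note that $\nu \mapsto \mathfrak{m}_k(\lambda,\nu)$ on the spectrum is dominated by such a symbol after the rescaling $s = t\lambda$; the $k$-th power of $U$ costs exactly $t^{-k}$ by homogeneity.

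The main obstacle is the second step: rigorously propagating the scalar inequality $|\lambda_{pq}^U|^k e^{-t\lambda_{pq}^L/2} \lesssim_k t^{-k} e^{-t\lambda_{pq}^L/4}$ from the level of eigenvalues to a genuine pointwise Gaussian bound on the integral kernel of $U^k e^{-tL}$, with constants uniform in $t$ and the Gaussian exponent $b$ independent of $k$. This is where one must be careful: a crude argument would lose the Gaussian profile or make $b$ deteriorate with $k$. I expect the right route is to fix once and for all the Gaussian bound for $e^{-tL}$ (with some $b_0>0$), then write $U^k e^{-tL} = (U^k e^{-tL/2})\circ e^{-tL/2}$, bound the kernel of the first factor in a weighted $L^2$ norm $\vvvert\cdot\vvvert_{2,\beta,t^{1/2}}$ for every $\beta$ (using the spectral bound and finite propagation speed to get arbitrary polynomial off-diagonal decay, with $L^2\to L^2$ norm $\lesssim_k t^{-k}$), and then compose with the Gaussian kernel of $e^{-tL/2}$: the convolution (in the sense of integral kernels on the doubling space) of an arbitrarily-fast polynomially decaying kernel with a Gaussian kernel at scale $t^{1/2}$ again satisfies a Gaussian bound at scale $t^{1/2}$, with a possibly smaller but $k$-independent exponent $b$, and on-diagonal size $V(t^{1/2})^{-1}$. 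This composition lemma is routine given doubling and \eqref{eq:srvol}, so the real content is the two inputs — the Gaussian bound for $L$ and the spectral inequality $|\lambda_{pq}^U|\le\lambda_{pq}^L$ — both of which are in hand from Proposition \ref{prp:spectralinfo} and the literature on $L$.
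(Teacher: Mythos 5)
There is a genuine gap, and it lies exactly where you flagged the ``main obstacle'': the passage from the spectral inequality $|\lambda_{pq}^U|\leq\lambda_{pq}^L$ to a pointwise \emph{Gaussian} bound for $K_{U^ke^{-tL}}$ does not go through as proposed. Your composition lemma is false: if $H$ satisfies $|H(x,z)|\leq C_N V(t^{1/2})^{-1}(1+\dist(x,z)/t^{1/2})^{-N}$ for every $N$ and $K$ is Gaussian at scale $t^{1/2}$, then splitting the integral $\int H(x,z)K(z,y)\,d\mu(z)$ according to whether $\dist(x,z)\geq\dist(x,y)/2$ or $\dist(z,y)\geq\dist(x,y)/2$ shows that $H*K$ inherits only the \emph{worse} of the two decays, i.e.\ arbitrary polynomial decay, not $\exp(-b\,\dist(x,y)^2/t)$ (one cannot let $N\to\infty$ without control on the growth of $C_N$). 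Moreover, even the polynomial off-diagonal decay of $U^ke^{-tL/2}$ is not available by the route you sketch: $U^ke^{-tL/2}$ is a genuine joint function of $(L,U)$ (the pairs $(p,q)$ and $(q,p)$ give the same $\lambda^L$ but opposite $\lambda^U$), so finite propagation speed for $\cos(s\sqrt L)$ does not apply to it directly, and $U^k\cos(s\sqrt L)$ is not $L^2$-bounded. The natural tool for off-diagonal bounds on joint functions of $(L,U)$ is precisely Theorem \ref{thm:multipliers}/Corollary \ref{cor:l2_system} of the paper, whose hypothesis \eqref{eq:gaussiantype} is fed by the very proposition you are trying to prove --- so that route is circular here.

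The paper's proof is both shorter and structured differently. For $t\leq 1$ it simply cites the subelliptic heat kernel estimates of Jerison and S\'anchez-Calle \cite{jerison_estimates_1986,jerison_subelliptic_1987}: these already contain Gaussian bounds for the $T$-derivatives of the heat kernel of a sum of squares, with the correct factor $t^{-k}$ because $T$ has weight two in the sub-Riemannian grading; this is genuine PDE input (parametrix/scaling), not derivable by soft spectral arguments from the $k=0$ case. For $t\geq 1$ the Gaussian factor is harmless, since $\dist$ is bounded and $V(t^{1/2})\sim 1$ by \eqref{eq:srvol}, so the claim reduces to $\|U^ke^{-tL}\|_{1\to\infty}\leq C_k t^{-k}$; this is proved by factoring through $L^2$ and summing the spectral series, using \eqref{eq:avL}--\eqref{eq:avU} and $(\lambda^U_{pq})^{2h}\leq t^{-2h}(t\lambda^L_{pq})^{2h}$ --- which is the one ingredient you identified correctly. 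If you want a self-contained argument you must either reproduce the small-time analysis of \cite{jerison_estimates_1986} or find a genuinely Gaussian (e.g.\ Davies--Gaffney type) estimate for $U^ke^{-tL}$ with the weight-two scaling; the spectral inequality alone will not produce the exponential profile.
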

\begin{proof}
These estimates are well known, at least for for $t \leq 1$ (see, e.g., \cite[Theorem 3]{jerison_estimates_1986} or \cite[\S 4]{jerison_subelliptic_1987}). On the other hand, since $\Sphere$ is compact and $\dist$ is bounded, the estimates for $t \geq 1$ follow from the uniform bound
\[
\vvvert K_{U^k e^{-tL}} \vvvert_{\infty,0,t^{1/2}} \leq C_k \, t^{-k},
\]
which in turn is easily proved by $L^2$-spectral theory (cf.\ \cite[p.\ 630 top]{cowling_spectral_2011} or \cite[p.\ 24 bottom]{casarino_spectral_sphere}):
\[\begin{split}
\vvvert K_{U^k e^{-tL}} \vvvert_{\infty,0,t^{1/2}} &= V(t^{1/2}) \, \| U^k e^{-tL} \|_{1 \to \infty} \\
&\leq V(t^{1/2}) \, \| U^{k} e^{-tL/2} \|_{2 \to \infty} \, \| e^{-tL/2} \|_{1 \to 2}
\end{split}\]
and, for all $h \in \N$,
\[\begin{split}
\| U^{h} e^{-tL/2} \|_{1 \to 2}^2 &= \| U^{h} e^{-tL/2} \|_{2 \to \infty}^2\\
&= \sum_{p,q \in \N} (\lambda_{pq}^U)^{2h} \, e^{-t \lambda_{pq}^L} \dim\mathcal{H}_{pq} / \mu(\Sphere) \\
&\leq t^{-2h} \sum_{p,q \in \N} (t\lambda_{pq}^L)^{2h} \, e^{-t \lambda_{pq}^L} \dim\mathcal{H}_{pq} / \mu(\Sphere) \\
&\lesssim t^{-2h} \sum_{\ell \in \N} (t \ell^2)^{2h} e^{-t\ell^2} \ell^{Q-1} \\
&\lesssim \, t^{-2h} \, V(t^{1/2})^{-1}
\end{split}\]
by Proposition \ref{prp:spectralinfo}, eq.\ \eqref{eq:srvol} and \cite[eq.\ (21)]{cowling_spectral_2011}.
\end{proof}

Thanks to the above bounds on the heat kernel of $L$ and its $U$-derivative, we can apply Theorem \ref{thm:multipliers} to the system $(L,U)$ and isotropic dilations $\epsilon_r(\lambda) = r^2 \lambda$ on $\R^2$ (cf. Remark \ref{rem:ex}\ref{en:ex2}).

\begin{cor}\label{cor:l2_system}
For all $\beta \geq 0$, $\epsilon,R > 0$, and $G : \R^2 \to \C$ with $\supp G \subseteq [-R^2,R^2]^2$,
\[
\vvvert K_{G(L,U)}\vvvert_{2,\beta,R^{-1}} \leq C_{\beta,\epsilon} \, \|G(R^2\,\cdot) \|_{L^\infty_{\beta+\epsilon}(\R^2)}.
\]
\end{cor}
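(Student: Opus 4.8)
The plan is to obtain Corollary~\ref{cor:l2_system} as a direct specialization of the abstract multivariate multiplier theorem of Section~\ref{section:joint} (Theorem~\ref{thm:multipliers}) to the doubling metric measure space $(\Sphere,\dist,\mu)$, the pair of commuting operators $(L,U)$, and the isotropic dilations $\epsilon_r(\lambda)=r^2\lambda$ on $\R^2$. Thus the work reduces to checking that all the hypotheses of that theorem are met and then matching normalizations in its conclusion.

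First I would record the structural data. By \eqref{eq:srvol}, $(\Sphere,\dist,\mu)$ is doubling of homogeneous dimension $Q=2n$ and displacement parameter $0$, as already observed; the operators $L\geq0$ and $U$ are self-adjoint on $L^2(\Sphere)$ and, being $\group{U}(n)$-equivariant, commute and admit the joint spectral decomposition of Proposition~\ref{prp:spectralinfo}; and $\epsilon_r(\lambda)=r^2\lambda$ is an admissible dilation family in the relevant sense (Remark~\ref{rem:ex}\ref{en:ex2}), with $L$ in the role of the ``heat generator'' (so that the geometric scale $r$ corresponds to the time $t=r^2$) and $U$ in the role of the component that gets differentiated. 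The one quantitative input required is a Gaussian, or merely polynomial-of-every-order, bound for the heat kernel of $L$ together with its $U$-derivatives, matched to the dilation structure --- and this is exactly Proposition~\ref{prp:gaussianestimates}: the on-diagonal factor $V(t^{1/2})^{-1}$ encodes the scale $r=t^{1/2}$, and the factor $t^{-k}$ per $U$-derivative says that $U$ carries the same homogeneity as $L$ under $\epsilon_r$, which is precisely the isotropic choice. With this verified, Theorem~\ref{thm:multipliers} applies, and the weighted $L^2$ estimate for jointly compactly supported multipliers that is the basic building block of its proof (the ``elementary estimate'' underlying the Mihlin--H\"ormander conclusion) is exactly the asserted inequality, once one performs the rescaling $G\mapsto G(R^2\,\cdot)$ and identifies the scale of the weight as $r=R^{-1}$.

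I expect the only delicate point to be the bookkeeping: one must check that the abstract exponents specialize correctly and that the regime ``displacement parameter $0$'' is covered, and one must accept the $\epsilon$-loss over the ``natural'' order $\beta$ --- this is the standard price of passing from a Gaussian kernel bound to an $L^\infty$-Sobolev multiplier bound, paid via a dyadic resolution of $G$ and Plancherel, and it is already built into Theorem~\ref{thm:multipliers}. As a sanity check, the mechanism behind the estimate is the factorization $G(L,U)=H(L/R^2,U/R^2)\circ e^{-R^{-2}L}$ with $H(s,t)=e^{s}G(R^2 s,R^2 t)$: the support hypothesis confines $s$ to $[-1,1]$, so $\|H(L/R^2,U/R^2)\|_{2\to2}=\|H\|_{L^\infty}\lesssim\|G(R^2\,\cdot)\|_{L^\infty}$ by the spectral theorem, while Proposition~\ref{prp:gaussianestimates} with $k=0$ and $t=R^{-2}$ lets the Gaussian decay of $e^{-R^{-2}L}$ absorb the polynomial weight $(1+R\dist)^\beta$; distributing that weight correctly between the two factors of the composition is precisely what forces one to retain $\beta+\epsilon$ derivatives on $G$, and that distribution is handled once and for all in Section~\ref{section:joint}.
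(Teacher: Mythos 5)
Your proposal is correct and follows exactly the paper's route: the paper proves this corollary by invoking Theorem~\ref{thm:multipliers}\ref{en:multipliers_l2} (the weighted $L^2$ estimate, $p=2$, $b=\beta$, $s=\beta+\epsilon$) for the system $(L,U)$ with isotropic dilations $\epsilon_r(\lambda)=r^2\lambda$, the hypothesis \eqref{eq:gaussiantype} being supplied by Proposition~\ref{prp:gaussianestimates} via the change of variables $\Psi(\lambda_1,\lambda_2)=(e^{-\lambda_1},\lambda_2 e^{-\lambda_1})$ of Remark~\ref{rem:ex}\ref{en:ex2}. Your verification of the hypotheses and the rescaling $r=R^{-1}$, $F=G(R^2\,\cdot)$ match the intended specialization.
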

\begin{proof}
This is an instance of Theorem \ref{thm:multipliers}\ref{en:multipliers_l2} applied to the system $(L,U)$.
\end{proof}

From these estimates we can now easily derive a (non-sharp) spectral multiplier theorem for $\boxb$. Define, for all $t>0$, the operator $A_t$ by
\[
A_t = \exp(-t^2 L).
\]

\begin{cor}\label{cor:nonsharp}
The following holds.
\begin{enumerate}[label=(\roman*)]
\item\label{en:nonsharp_l2} For all $\beta \geq 0$, $\epsilon,R,t > 0$, and $F : \R \to \C$ with $\supp F \subseteq [R/16,R]$,
\[
\vvvert K_{F(\sqrt\boxb) \,(1-A_t)} \vvvert_{2,\beta,R^{-1}} \leq C_{\beta,\epsilon} \,
\|F(R\, \cdot) \|_{L_{\beta+\epsilon}^\infty(\R)} \, \min\{1,(Rt)^2\}.
\]
\item\label{en:nonsharp_l1} For all $\beta \geq 0$, $\epsilon,R,r,t > 0$, and $F : \R \to \C$ with $\supp F \subseteq [R/16,R]$,
\begin{multline*}
\esssup_y \int_{\dist(x,y) \geq r} |K_{F(\sqrt\boxb) \,(1-A_t)}(x,y)| \,dx \\
\leq C_{\beta,\epsilon}\,  \|F(R \,\cdot) \|_{L_{\beta+Q/2+\epsilon}^\infty(\R)} \, \min\{1,(Rt)^2\} \, (1+Rr)^{-\beta}.
\end{multline*}
\item\label{en:nonsharp_wl1} For all $\epsilon > 0$ and $F : \R \to \C$,
\[
\|F(\sqrt{\boxb})\|_{L^1 \to L^{1,\infty}} \leq C_{\epsilon} \, \| F\|_{L^\infty_{Q/2+\epsilon,\sloc}} .
\]
\end{enumerate}
\end{cor}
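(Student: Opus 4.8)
\emph{Part \textup{\ref{en:nonsharp_l2}}.} The starting point is the identity $2\boxb = L-U$ from \eqref{eq:oprelation}, which places $F(\sqrt\boxb)(1-A_t)$ inside the joint functional calculus of the commuting pair $(L,U)$: it equals $G(L,U)$ with $G(\lambda_1,\lambda_2) = F(\sqrt{(\lambda_1-\lambda_2)/2})\,(1-e^{-t^2\lambda_1})$. This $G$ is not compactly supported, but on the joint spectrum this does not matter: by Proposition \ref{prp:spectralinfo}, whenever $F(\sqrt\boxb)$ acts nontrivially on $\mathcal{H}_{pq}$ we have $\lambda_{pq}^{\boxb}\in[R^2/256,R^2]$, and then \eqref{eq:avestimate} forces $\lambda_{pq}^L\in[R^2/256,(n+1)R^2]$ and $\lambda_{pq}^U=\lambda_{pq}^L-2\lambda_{pq}^{\boxb}\in[-2R^2,(n+1)R^2]$. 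Hence, multiplying $G$ by a product of smooth cutoffs $\chi_1(\lambda_1/R^2)\chi_2(\lambda_2/R^2)$, with $\chi_1$ supported \emph{away from $0$}, changes neither the operator $G(L,U)$ nor the value of the multiplier on the relevant part of the joint spectrum, while producing a function $\tilde G$ supported in a box $[-cR^2,cR^2]^2$ with $c=c(n)$. Applying Corollary \ref{cor:l2_system} to $\tilde G$ at scale $c^{1/2}R\sim R$ (and replacing $R^{-1}$ by $c^{1/2}R$ up to the doubling equivalence of the norms $\vvvert\cdot\vvvert_{2,\beta,\cdot}$) reduces the claim to $\|\tilde G(cR^2\,\cdot)\|_{L^\infty_{\beta+\epsilon/2}(\R^2)}\lesssim \|F(R\,\cdot)\|_{L^\infty_{\beta+\epsilon}(\R)}\min\{1,(Rt)^2\}$. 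Performing the volume‑preserving linear change of variables $(\lambda_1,\lambda_2)\mapsto(\lambda_1-\lambda_2,\lambda_1)$, the rescaled $\tilde G$ becomes, up to a smooth bounded factor coming from $\chi_2$, a tensor product $h_1(\lambda_1-\lambda_2)\,h_2(\lambda_1)$: the $h_1$–factor carries $F$ and has $\|h_1\|_{L^\infty_{\beta+\epsilon}(\R)}\lesssim\|F(R\,\cdot)\|_{L^\infty_{\beta+\epsilon}(\R)}$ (on the support, taking the square root is a harmless diffeomorphism since $\supp F\subseteq[R/16,R]$), while $h_2$ is a smooth function of the form $(1-e^{-cR^2t^2\,\cdot})\,\chi_1(c\,\cdot)$, and one uses $\|h_1\otimes h_2\|_{L^\infty_{\beta+\epsilon/2}}\lesssim\|h_1\|_{L^\infty_{\beta+\epsilon}}\|h_2\|_{C^N}$ for $N$ large. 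Finally $\|h_2\|_{C^N}\lesssim\min\{1,(Rt)^2\}$ because $\chi_1$ — hence $h_2$ — is supported in a bounded interval bounded away from $0$; this is exactly where the gain in $t$ comes from, and it crucially relies on \eqref{eq:avestimate} to keep $\lambda_1=\lambda_{pq}^L$ away from $0$.

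\emph{Part \textup{\ref{en:nonsharp_l1}}.} This is the routine passage from a weighted $L^2$ kernel bound to an $L^1$ tail estimate. Apply part \ref{en:nonsharp_l2} with $\beta$ replaced by $\beta+Q/2+\epsilon/4$ and $\epsilon$ by $\epsilon/4$, and write $K=K_{F(\sqrt\boxb)(1-A_t)}$; by Cauchy--Schwarz, for a.e.\ $y$,
\[
\int_{\dist(x,y)\geq r}|K(x,y)|\,dx \leq \big\|K(\cdot,y)(1+R\dist(\cdot,y))^{\beta+Q/2+\epsilon/4}\big\|_{L^2}\Big(\int_{\dist(x,y)\geq r}(1+R\dist(x,y))^{-Q-2\beta-\epsilon/2}\,dx\Big)^{1/2}.
\]
The first factor is controlled by part \ref{en:nonsharp_l2} (it equals $V(R^{-1})^{-1/2}\vvvert K\vvvert_{2,\beta+Q/2+\epsilon/4,R^{-1}}$). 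Since $\boxb$ has no spectrum in $(0,2(n-1))$ we may assume $R\geq1$; then, splitting into dyadic annuli $\dist(x,y)\sim2^kr$ and using \eqref{eq:srvol}, $\int_{\dist(x,y)\geq r}(1+R\dist(x,y))^{-Q-\delta}\,dx\lesssim_\delta V(R^{-1})(1+Rr)^{-\delta}$ for every $\delta>0$. Multiplying the two factors, the weights $V(R^{-1})^{\pm1/2}$ cancel and one is left with $\|F(R\,\cdot)\|_{L^\infty_{\beta+Q/2+\epsilon}}\min\{1,(Rt)^2\}(1+Rr)^{-\beta}$; the $Q/2+\epsilon$ loss in the order is precisely the amount of weight spent to make the volume integral converge.

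\emph{Part \textup{\ref{en:nonsharp_wl1}}.} We verify the hypotheses of the singular integral criterion of \cite[Theorem 1]{duong_singular_1999} (see also \cite[Theorem 3.3]{cowling_spectral_2001}) for $T=F(\sqrt\boxb)$ with the approximate identity $A_r=e^{-r^2L}$: the uniform bounds $\sup_r\|A_r\|_{1\to1}<\infty$, $\sup_r V(r)\|A_r\|_{1\to\infty}<\infty$ and the off‑diagonal Gaussian decay of $K_{A_r}$ all follow from Proposition \ref{prp:gaussianestimates} with $k=0$ (alternatively, $-L$ generates a Markov semigroup), while $\|T\|_{2\to2}=\sup_{\sigma\in\mathrm{spec}\sqrt\boxb}|F(\sigma)|\lesssim\|F\|_{L^\infty_{Q/2+\epsilon,\sloc}}$ — note that the supremum over $t\geq0$, in particular $t=0$, in the definition of the local Sobolev norm also bounds $|F(0)|$. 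It thus remains to prove the averaged H\"ormander condition \eqref{eq:gencz}. Decompose $F=F(0)\chr_{\{0\}}+\sum_j F\,\eta(2^{-j}\cdot)$, where $\eta\in C^\infty_c$ is supported in $[1/2,2]$ with $\sum_j\eta(2^{-j}\lambda)=1$ for $\lambda\neq0$. Each $F_j\defeq F\,\eta(2^{-j}\cdot)$ is supported in $[R_j/16,R_j]$ with $R_j=2^{j+1}$, and $\|F_j(R_j\,\cdot)\|_{L^\infty_\sigma}=\|F(R_j\,\cdot)\eta(2\,\cdot)\|_{L^\infty_\sigma}\lesssim\|F\|_{L^\infty_{\sigma,\sloc}}$ since $\eta(2\,\cdot)$ is itself an admissible cutoff; applying part \ref{en:nonsharp_l1} to $F_j$ with $t=r$ and summing the geometric series $\sum_j\min\{1,(R_jr)^2\}(1+R_jr)^{-\beta}\lesssim1$ — valid for \emph{any} $\beta>0$, which is why $Q/2+\epsilon$ smoothness suffices — gives $\sup_r\esssup_y\int_{\dist(x,y)\geq r}|K_{(\sum_j F_j)(\sqrt\boxb)(I-A_r)}(x,y)|\,dx\lesssim\|F\|_{L^\infty_{Q/2+\epsilon,\sloc}}$.

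\emph{The main obstacle} is the remaining term $F(0)\,\chr_{\{0\}}(\sqrt\boxb)=F(0)\,P$, where $P$ is the orthogonal projection onto $\ker\boxb$ — the Szeg\H{o} projection when $j=0$, which is not $L^1$‑bounded (this is exactly what makes the cases $j\in\{0,n-1\}$ exceptional and forbids taking $A_r=e^{-r^2\boxb}$). We keep $P$ away from the estimate that needs an $L^1$ bound by writing $P(I-A_r)=(I-A_r)-(I-P)(I-A_r)$: for $I-A_r$ the off‑diagonal kernel is $-K_{A_r}$, so $\int_{\dist(x,y)\geq r}|K_{I-A_r}(x,y)|\,dx\lesssim1$ by the Gaussian bound; and since $0$ is isolated in $\mathrm{spec}\,\boxb$ with gap $2(n-1)$, one has $I-P=\sum_{j\geq j_0}\eta(2^{-j}\boxb)$, and each $\eta(2^{-j}\boxb)(I-A_r)$ is of the form $F_j(\sqrt\boxb)(I-A_r)$ with $F_j(\lambda)=\eta(2^{-j}\lambda^2)$ supported in a dyadic interval bounded away from $0$ and with $\|F_j(R_j\,\cdot)\|_{L^\infty_\sigma}$ independent of $j$, so part \ref{en:nonsharp_l1} applies and summing as before yields $\sup_r\esssup_y\int_{\dist(x,y)\geq r}|K_{(I-P)(I-A_r)}(x,y)|\,dx\lesssim1$. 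Combining these bounds with $|F(0)|\lesssim\|F\|_{L^\infty_{Q/2+\epsilon,\sloc}}$ establishes \eqref{eq:gencz}, and the criterion then gives the weak type $(1,1)$ estimate. The only genuinely delicate points are the spectral comparison \eqref{eq:avestimate} feeding the $t$–gain in part \ref{en:nonsharp_l2}, and the reorganization just described; everything else is bookkeeping.
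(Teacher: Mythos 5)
Your proposal is correct and follows essentially the same route as the paper: part \ref{en:nonsharp_l2} via the identity $2\boxb=L-U$, a cutoff justified by \eqref{eq:avestimate} to place the multiplier in the scope of Corollary \ref{cor:l2_system} (the paper uses the single cutoff $\eta((\lambda_1-\lambda_2)/(2\lambda_1))$ where you use a product of cutoffs, to the same effect), part \ref{en:nonsharp_l1} by H\"older against the weight, and part \ref{en:nonsharp_wl1} via the Duong--McIntosh criterion with $A_r=e^{-r^2L}$ and a dyadic decomposition. The only organizational difference is that you absorb the term $F(0)\,\chr_{\{0\}}(\boxb)$ into the verification of \eqref{eq:gencz} by writing $\chr_{\{0\}}(\boxb)=I-\chr_{(0,\infty)}(\boxb)$, whereas the paper first proves the statement for $F(0)=0$ and then applies it to $F=\chr_{(0,\infty)}$ to handle the projection separately; these are the same idea.
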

\begin{proof}
\ref{en:nonsharp_l2}. Let
\[
G(\lambda_1,\lambda_2) = F(\sqrt{(\lambda_1-\lambda_2)_+/2}) \, (1-\exp(-t^2 \lambda_1)) \, \eta((\lambda_1-\lambda_2)/(2\lambda_1)),
\]
where $\eta \in C^\infty_c((0,2))$ and $\eta = 1$ on $[1/(n+1),1]$. By \eqref{eq:oprelation} and \eqref{eq:avestimate}, it is easily seen that
\[
G(L,U) = F(\sqrt{\boxb}) \, (1-A_t)
\]
and moreover, for all $s \in [0,\infty)$,
\[
\|G(R^2 \,\cdot) \|_{L^\infty_s(\R^2)} \leq C_s \, \|F(R \,\cdot) \|_{L^\infty_s(\R)} \, \min\{ 1, (Rt)^2\},
\]
hence \ref{en:nonsharp_l2} follows by Corollary \ref{cor:l2_system}.

\ref{en:nonsharp_l1}. This follows from \ref{en:nonsharp_l2}, since
\[
\esssup_y \int_{\dist(x,y) \geq r} |K_{F(\sqrt\boxb) \,(1-A_t)}(x,y)| \,dx \leq (1+Rr)^{-\beta} \vvvert K_{F(\sqrt\boxb) \,(1-A_t)} \vvvert_{1,\beta,R^{-1}}
\]
and
\[
\vvvert K_{F(\sqrt\boxb) \,(1-A_t)} \vvvert_{1,\beta,R^{-1}} \leq C_{\beta,\epsilon} \, \vvvert K_{F(\sqrt\boxb) \,(1-A_t)} \vvvert_{2,\beta+Q/2+\epsilon/2,R^{-1}}
\]
by H\"older's inequality \eqref{eq:hoelderweight}.

 by H\"older's inequality and \cite[Lemma 4.4]{duong_plancherel-type_2002}.

\ref{en:nonsharp_wl1}. Suppose first that $F(0)=0$. Note that, by Proposition \ref{prp:gaussianestimates}, the operators $A_t$ satisfy the ``Poisson-type bounds'' of \cite[eq.\ (2) and (3)]{duong_singular_1999}. A simple adaptation of the argument in \cite[Proof of Theorem 3.1]{duong_plancherel-type_2002} (see also the proof of Theorem \ref{thm:multipliers}\ref{en:multipliers_mh} below), exploiting \ref{en:nonsharp_l1} in place of \cite[eq.\ (4.18)]{duong_plancherel-type_2002}, shows that $F(\sqrt{\boxb})$ is of weak type $(1,1)$, with
\[
\|F(\sqrt{\boxb})\|_{L^1 \to L^{1,\infty}} \leq C_{\epsilon} \, \| F \|_{L^\infty_{Q/2+\epsilon,\sloc}}.
\]
In particular, if we take $F = \chr_{(0,\infty)}$, then we recover the weak type (1,1) of the Szeg\H{o} projection $\chr_{\{0\}}(\boxb) = I - \chr_{(0,\infty)}(\boxb)$ on the unit sphere \cite{koranyi_singular_1971}.

For a general $F$, it is then sufficient to split
\[
F(\sqrt{\boxb}) = F(0) \, \chr_{\{0\}}(\boxb) + (F \chr_{\R \setminus \{0\}})(\sqrt{\boxb})
\]
and apply what we have just proved to the two summands.
\end{proof}

\section{Sharpening the result}

Here we show how the weighted Plancherel estimates proved in \cite{casarino_spectral_sphere} can be used to sharpen the multiplier theorem given by Corollary \ref{cor:nonsharp}\ref{en:nonsharp_wl1} and obtain Theorem \ref{thm:m_sphere}.
As in \cite[\S 5]{casarino_spectral_sphere}, define the weight $\weight : \Sphere \times \Sphere \to [0,\infty)$ by
\[
\weight(w,z) = |1-|\langle w,z\rangle|^2|^{1/2}.
\]
Here are some basic properties of $\weight$ that will be of use.

\begin{lem}
For all $r>0$ and $\alpha,\beta \geq 0$ such that $\alpha+\beta > Q$ and $\alpha < 2n-2$, and for all $w \in \Sphere$,
\begin{equation}\label{eq:weightsint}
\int_{\Sphere} (1+\dist(z,w)/r)^{-\beta} (1+\weight(z,w)/r)^{-\alpha} \,d\mu(z) \leq C_{\alpha,\beta} V(r).
\end{equation}
Moreover
\begin{equation}\label{eq:weightsint2}
\int_{\Sphere} (1+\weight(z,w)/r)^{-\alpha} \,d\mu(z) \leq C_{\alpha} \, \min\{1,r^{\alpha}\}.
\end{equation}
for all $r > 0$, $\alpha \in [0,2n-2)$ and $w \in \Sphere$, and
\begin{equation}\label{eq:weightsineq}
\weight(w,z) \leq C \dist(w,z)
\end{equation}
for all $w,z \in \Sphere$.
\end{lem}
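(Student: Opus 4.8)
The plan is to use the $\group{U}(n)$-invariance of $\dist$, $\weight$ and $\mu$ to reduce to the case $w = e_1 = (1,0,\dots,0)$, in which $\langle z,w\rangle = z_1$ is the first complex coordinate of $z$, so that $\dist(z,e_1) \sim |1-z_1|^{1/2}$ and $\weight(z,e_1) = (1-|z_1|^2)^{1/2}$ (recall $|z_1| \le 1$ on $\Sphere$). The only ingredient that is not elementary is then the classical formula for the pushforward of $\mu$ under $z \mapsto z_1$: up to a constant it equals $(1-|x|^2)^{n-2}\,dm(x)$ on the closed unit disc $\overline{\mathbb{D}} \subseteq \C$, with $m$ Lebesgue measure on $\C = \R^2$ (cf.\ \cite[Section 3]{casarino_spectral_sphere}).

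Estimate \eqref{eq:weightsineq} is immediate: since $|\langle w,z\rangle| \le 1$, the reverse triangle inequality gives $\weight(w,z)^2 = 1-|\langle w,z\rangle|^2 \le 2\bigl(1-|\langle w,z\rangle|\bigr) \le 2\,|1-\langle w,z\rangle| \sim \dist(w,z)^2$. For \eqref{eq:weightsint2}, the reduction and the pushforward turn the left-hand side into a constant multiple of $\int_{\overline{\mathbb{D}}} \bigl(1+(1-|x|^2)^{1/2}/r\bigr)^{-\alpha}(1-|x|^2)^{n-2}\,dm(x)$; polar coordinates and the substitution $v = (1-|x|^2)^{1/2}$ reduce this to a constant multiple of $\int_0^1 (1+v/r)^{-\alpha}\,v^{2n-3}\,dv$, which is $\lesssim 1$ for $r \ge 1$, while for $r < 1$ it is $\lesssim \int_0^r v^{2n-3}\,dv + r^\alpha \int_r^1 v^{2n-3-\alpha}\,dv \lesssim r^{2n-2}+r^\alpha \lesssim r^\alpha$ --- the convergence of $\int_0^1 v^{2n-3-\alpha}\,dv$ being exactly where the hypothesis $\alpha < 2n-2$ is used.

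The substance is \eqref{eq:weightsint}, and its geometric heart is the estimate
\[
\mu\bigl(\{z \in \Sphere \tc \dist(z,w) \le \delta,\ \weight(z,w) \le \eta\}\bigr) \lesssim \delta^2\,\eta^{2n-2}, \qquad w \in \Sphere,\quad 0 < \eta \le \delta \le 1.
\]
After the reduction, the pushforward bounds the left-hand side by a constant times $\eta^{2n-4}$ times the Lebesgue measure of $\{x \in \overline{\mathbb{D}} \tc |1-x| \lesssim \delta^2,\ 1-|x|^2 \le \eta^2\}$; writing $x = 1-\zeta$ (so that $\operatorname{Re}\zeta \ge 0$ and $1-|x|^2 = 2\operatorname{Re}\zeta - |\zeta|^2$), one checks that for each fixed value of $\operatorname{Im}\zeta = \tau$ the constraint $0 \le 1-|x|^2 \le \eta^2$ confines $\operatorname{Re}\zeta$ to an interval of length $\sqrt{1-\tau^2}-\sqrt{1-\tau^2-\eta^2} \sim \eta^2$, while $|\operatorname{Im}\zeta| \lesssim \delta^2$, so that the region has measure $\lesssim \delta^2\eta^2$ and the claim follows (the case of $\delta$ bounded away from $0$ being trivial, as $\mu(\{\weight(\cdot,w) \le \eta\}) \sim \eta^{2n-2}$). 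Granting this, I would decompose $\Sphere$ into the pieces $A_{k,j} = \{z \tc \dist(z,w) \sim 2^k r\} \cap \{z \tc \weight(z,w) \sim 2^j r\}$, where $k$ runs over the nonnegative integers with $2^k r \lesssim 1$ and, by \eqref{eq:weightsineq}, $0 \le j \le k+O(1)$; on $A_{k,j}$ the integrand of \eqref{eq:weightsint} is $\sim 2^{-k\beta}\,2^{-j\alpha}$, and the displayed estimate (together with $\mu(A_{k,j}) \le V(C2^k r) \sim (2^k r)^Q \le (2^j r)^{2n-2}(2^k r)^2$ for the finitely many indices with $j > k$) yields $\mu(A_{k,j}) \lesssim (2^j r)^{2n-2}(2^k r)^2$. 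Therefore
\[
\int_\Sphere (1+\dist(z,w)/r)^{-\beta}(1+\weight(z,w)/r)^{-\alpha}\,d\mu(z) \lesssim r^{2n}\sum_{k}2^{k(2-\beta)}\sum_{0 \le j \le k+O(1)}2^{j(2n-2-\alpha)} \lesssim r^{2n}\sum_{k \ge 0}2^{k(Q-\alpha-\beta)},
\]
where the inner sum is $\sim 2^{k(2n-2-\alpha)}$ precisely because $\alpha < 2n-2$, and the outer geometric series converges precisely because $\alpha+\beta > Q$; since $V(r) \sim r^Q$ for $r \lesssim 1$ by \eqref{eq:srvol} and the case $r \gtrsim 1$ is trivial (both weights being then $\sim 1$ throughout $\Sphere$, whence the integral is $\sim \mu(\Sphere) \sim V(r)$), this proves \eqref{eq:weightsint}.

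The main obstacle is the displayed geometric measure bound. The difficulty is that a $\dist$-ball of radius $\delta$ about $w$ is, in the ambient Heisenberg-type geometry, strongly anisotropic --- essentially the product of a Euclidean ball of radius $\delta$ in $2n-2$ of the directions with an interval of length $\delta^2$ in the remaining ``bad'' one --- whereas $\weight(\cdot,w) \le \eta$ carves out a tube of radius $\eta$ about the complex line $\C w$, on whose intersection with $\Sphere$ (a circle) $\weight$ vanishes identically; one has to control the measure of the intersection of these two sets uniformly in $\delta$, $\eta$ and in the relative position of tube and ball. Passing to the single coordinate $z_1$ and computing everything in the variable $\zeta = 1-\langle z,w\rangle$ is exactly what reduces this to the elementary planar estimate above and avoids handling the anisotropic balls directly.
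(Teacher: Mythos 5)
Your proposal is correct, and the preliminary reductions coincide with the paper's: both use $\group{U}(n)$-invariance to set $w=(1,0,\dots,0)$, note that $\weight(z,w)=(1-|z_1|^2)^{1/2}$ (which on $\Sphere$ equals $|z'|$, the norm of the remaining coordinates), and derive \eqref{eq:weightsineq} and \eqref{eq:weightsint2} from this. Where you diverge is in the proof of the main estimate \eqref{eq:weightsint}. The paper stays on the sphere and exploits the pointwise lower bound $\dist(z,w)\gtrsim |z'|+|\Im z_1|^{1/2}$: it splits $\beta=\beta_1+\beta_2$ with $\beta_1>2n-2-\alpha$ and $\beta_2>2$, distributes $(1+\dist/r)^{-\beta}$ between the two ``directions'' $z'$ and $\Im z_1$, and bounds the integral by a product of a one-dimensional integral in $\Im z_1$ (contributing $r^2$) and a $(2n-2)$-dimensional integral in $z'$ (contributing $r^{2n-2}$), with no need for a superlevel-set decomposition. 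You instead push the measure forward to the unit disc, prove the ball--tube intersection bound $\mu(\{\dist\le\delta,\ \weight\le\eta\})\lesssim \delta^2\eta^{2n-2}$ by an explicit planar computation, and then sum dyadically, using $\alpha<2n-2$ and $\alpha+\beta>Q$ exactly where the paper uses them. Your route is longer and requires the pushforward formula for $\mu$ under $z\mapsto z_1$ as an extra (standard) input, but it isolates the geometric content in a clean, quantitative measure estimate for the intersection of an anisotropic ball with a tube, which is arguably more portable; the paper's exponent-splitting trick is shorter and avoids the dyadic bookkeeping entirely. Both arguments are sound and yield the same constants up to equivalence.
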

\begin{proof}
By $\group{U}(n)$-invariance of $\weight$ and $\dist$, it is not restrictive to assume that $w = (1,0,\dots,0)$. If $z = (z_1,z_2,\dots,z_n)=(z_1,z')$, then
\[
\weight(z,w) = |1-|z_1|^2|^{1/2} = |z'|
\]
and
\[
\dist(z,w) \sim |1-z_1|^{1/2} \gtrsim |z'| + |\Im z_1|^{1/2}
\]
(see \cite[Proposition 3.1]{casarino_spectral_sphere}). This gives \eqref{eq:weightsineq} immediately, and \eqref{eq:weightsint2} follows because
\[
\int_{\Sphere} \weight(z,w)^{-\alpha} \,d\mu(z) < \infty
\]
for $\alpha < 2n-2$.

As for \eqref{eq:weightsint}, the case $r \geq 1$ is trivial because $\Sphere$ is compact, so we may assume $r \leq 1$. Since $\beta > Q-\alpha = (2n-2-\alpha) + 2$ and $2n-2-\alpha > 0$, we can decompose $\beta = \beta_1+\beta_2$ so that $\beta_1 > 2n-2-\alpha$ and $\beta_2 > 2$, hence
\[\begin{split}
\int_{\Sphere} &(1+\dist(z,w)/r)^{-\beta} (1+\weight(z,w)/r)^{-\alpha} \,d\mu(z) \\
&\leq C_{\alpha,\beta} \int_{\Sphere} (1+(|z'|+|\Im z_1|^{1/2})/r)^{-\beta} (1+|z'|/r)^{-\alpha} \,d\mu(z) \\
&\leq C_{\alpha,\beta} \int_{\Sphere} (1+|\Im z_1|^{1/2}/r)^{-\beta_2} (1+|z'|/r)^{-\alpha-\beta_1} \,d\mu(z) \\
&\leq C_{\alpha,\beta} \int_\R (1+|u|^{1/2}/r)^{-\beta_2} \,du  \int_{\R^{2n-2}} (1+|v|/r)^{-\alpha-\beta_1} \,dv\\
&= C_{\alpha,\beta} \,r^{Q}
\end{split}\]
and we are done.
\end{proof}

Let us introduce the following bi-weighted mixed Lebesgue norms for Borel functions $K : \Sphere \times \Sphere \to \C$: for all $p \in [1,\infty]$ and $\alpha,\beta \in \R$,
\[
\vvvert K\vvvert_{p,\beta,\alpha,r} = \esssup_y  V(r)^{1/p'} \|K(\cdot,y) \, (1+\dist(\cdot,y)/r)^\beta \, (1+\weight(\cdot,y)/r)^\alpha \|_{L^p(\Sphere)}.
\]
As in \cite[\S 2]{casarino_spectral_sphere}, for all $N \in \N \setminus \{0\}$ and $F : \R \to \C$ supported in $[0,1]$, let the norm $\|F \|_{N,2}$ be defined by
\[
\|F \|_{N,2} = \left( \frac{1}{N} \sum_{i=1}^N \sup_{\lambda \in \left[\frac{i-1}{N},\frac{i}{N}\right]} |F(\lambda)|^2 \right)^{1/2}.
\]

\begin{prp}\label{prp:weightedplancherel} 
For all $\alpha \in [0,1/2)$, $N \in \N \setminus \{0\}$, $t > 0$, and $F : \R \to \C$ vanishing out of $(0,N)$,
\begin{align}
\label{eq:weightedplancherel_pure} \vvvert K_{F(\sqrt\boxb)} \vvvert_{2,0,\alpha,N^{-1}} &\leq C_{\alpha} \, \|F(N\,\cdot) \|_{N,2} , \\
\label{eq:weightedplancherel_approx} \vvvert K_{F(\sqrt\boxb) \,(1-A_t)} \vvvert_{2,0,\alpha,N^{-1}} &\leq C_{\alpha} \, \|F(N\,\cdot) \|_{N,2} \min\{1,(Nt)^2\}.
\end{align}
\end{prp}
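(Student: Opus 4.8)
The plan is to deduce Proposition~\ref{prp:weightedplancherel} from the corresponding unweighted bound in Corollary~\ref{cor:nonsharp}\ref{en:nonsharp_l2} (together with its analogue with $F(\sqrt\boxb)$ in place of $F(\sqrt\boxb)(1-A_t)$, which follows by the same argument with $t=0$ convention) by inserting the weighted Plancherel estimate of \cite[\S 5]{casarino_spectral_sphere} at the appropriate place. The crucial point is that the weight $\weight$ has \emph{two} sources of smallness: near the diagonal it is comparable to $\dist$ by \eqref{eq:weightsineq}, and globally it has the integrability recorded in \eqref{eq:weightsint2}. So the exponent $\alpha<1/2$ on $\weight$ can be split: a part $\alpha_1$ is absorbed using $\weight\lesssim\dist$ and costs only $\alpha_1$ extra derivatives on $F$ (which is harmless since we are allowed the $\|\cdot\|_{N,2}$ norm, not a Sobolev norm), and the remaining part is handled by the genuine weighted Plancherel estimate from \cite{casarino_spectral_sphere}, which is exactly the statement that $V(N^{-1})\|\weight(\cdot,y)^{\alpha} K_{F(\sqrt\boxb)}(\cdot,y)\|_2^2$ is controlled by $\|F(N\,\cdot)\|_{N,2}^2$ for $\alpha<1/2$.

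Concretely, I would first reduce to $F$ supported in a fixed dyadic annulus $[N/16,N]$ rescaled, i.e.\ write $F=\sum_k F_k$ with $F_k$ supported where $\sqrt\boxb\sim 2^k$ for $2^k\lesssim N^{1/2}$, and note that the $\|\cdot\|_{N,2}$-norm dominates the $\ell^2$-sum of the dyadic pieces' Sobolev norms (this is the standard passage from the Marcinkiewicz-type norm $\|\cdot\|_{N,2}$ to $L^\infty_\beta$ norms on dyadic pieces, as in \cite[\S 2]{casarino_spectral_sphere}). For a single dyadic piece $F_k$ with $\supp F_k\subseteq[R/16,R]$, $R=2^k$, I would interpolate: Corollary~\ref{cor:nonsharp}\ref{en:nonsharp_l2} gives $\vvvert K_{F_k(\sqrt\boxb)(1-A_t)}\vvvert_{2,\beta,R^{-1}}\lesssim_{\beta,\epsilon}\|F_k(R\cdot)\|_{L^\infty_{\beta+\epsilon}}\min\{1,(Rt)^2\}$ for every $\beta\geq0$, and the weighted Plancherel estimate of \cite{casarino_spectral_sphere} gives $\vvvert K_{F_k(\sqrt\boxb)}\vvvert_{2,0,\alpha,R^{-1}}\lesssim_\alpha\|F_k(R\cdot)\|_{L^2}$ for $\alpha<1/2$ (modulo the dilation bookkeeping between the ball radius $R^{-1}$ and the cutoff scale). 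Combining these two — the first controlling the decay in $\dist$, the second giving the gain of $\weight^\alpha$ — via the pointwise bound $(1+\weight/r)^\alpha\leq(1+\dist/r)^{\alpha}$ from \eqref{eq:weightsineq} on part of the weight, yields the bi-weighted bound for $F_k$; summing over $k$ with the $\ell^2$ triangle inequality recovers the full statement with $\|F(N\cdot)\|_{N,2}$ on the right. The $\min\{1,(Nt)^2\}$ factor in \eqref{eq:weightedplancherel_approx} is carried along unchanged since it is already present in \ref{en:nonsharp_l2} and $F(0)=0$ is not needed because $1-A_t$ kills the kernel of $L$ which contains the kernel of $\boxb$.

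The main obstacle I anticipate is matching the \emph{scales} correctly: the weighted Plancherel estimate from \cite{casarino_spectral_sphere} is naturally stated at the spectral scale $N$ of the $\|\cdot\|_{N,2}$ norm, whereas Corollary~\ref{cor:nonsharp}\ref{en:nonsharp_l2} is stated at the dyadic scale $R$, and $\weight$ interacts with these two scales differently than $\dist$ does (the ball for $\dist$ at scale $R^{-1}$ has volume $V(R^{-1})\sim R^{-Q}$, but the effective ``$\weight$-ball'' has a different, anisotropic, size — this is precisely the mechanism that produces the dimensional gain from $Q/2$ to $d/2$). Getting the exponents to line up so that, after summing the dyadic pieces, exactly the norm $\|F(N\cdot)\|_{N,2}$ appears — with no loss of a logarithm and with the sharp range $\alpha<1/2$ preserved — requires care, and is really where the quantitative content of \cite[\S 5]{casarino_spectral_sphere} is being invoked. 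Everything else (the dyadic decomposition, Hölder in the weight, the $\ell^2$ summation, the $\min\{1,(Nt)^2\}$ bookkeeping) is routine.
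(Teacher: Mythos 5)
There is a genuine gap, and it stems from a misreading of what the right-hand side of the proposition is. The norm $\|F(N\,\cdot)\|_{N,2}$ is a purely size-based (Marcinkiewicz/Plancherel-type) quantity involving no derivatives of $F$ whatsoever: the proposition must hold for an arbitrary bounded Borel $F$ vanishing outside $(0,N)$, e.g.\ a wildly oscillating characteristic function, for which every Sobolev norm $\|F(N\,\cdot)\|_{L^\infty_{\beta+\epsilon}}$ with $\beta+\epsilon>0$ is infinite. Your plan routes part of the weight through Corollary \ref{cor:nonsharp}\ref{en:nonsharp_l2}, whose right-hand side is exactly such a Sobolev norm; your parenthetical claim that picking up ``$\alpha_1$ extra derivatives on $F$'' is ``harmless since we are allowed the $\|\cdot\|_{N,2}$ norm, not a Sobolev norm'' has the comparison backwards --- $\|\cdot\|_{N,2}$ is the \emph{weaker} norm, so any derivative loss is fatal, and the interpolation you describe cannot close with $\|F(N\,\cdot)\|_{N,2}$ on the right. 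The only way to avoid this is to take $\alpha_1=0$, i.e.\ not to split the weight at all. A second sign that the argument is aimed at the wrong target: the left-hand side is $\vvvert\cdot\vvvert_{2,0,\alpha,N^{-1}}$, with \emph{no} $\dist$-weight ($\beta=0$), so there is no decay in $\dist$ to be extracted from Corollary \ref{cor:nonsharp}\ref{en:nonsharp_l2} in the first place. The dyadic decomposition plus interpolation between the unweighted Sobolev bound and the weighted Plancherel bound is precisely the content of the \emph{next} statement, Proposition \ref{prp:improvedweightedconv}\ref{en:improvedweightedconv_l2}, which is why that statement concerns the smoothened multiplier $F*\xi$ and carries an $L^2$ Sobolev norm on the right; you have conflated the two propositions.

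The paper's actual proof is direct and short: by Proposition \ref{prp:spectralinfo} and \cite[Lemma 4.3]{casarino_spectral_sphere}, $K_{F(\sqrt\boxb)(1-A_t)}$ is a ``kernel polynomial'' to which \cite[Proposition 5.3]{casarino_spectral_sphere} applies, yielding for every $\theta\in[0,1)$
\[
\int_{\Sphere}|K_{F(\sqrt\boxb)(1-A_t)}(x,y)|^2\,\weight(x,y)^\theta\,d\mu(x)\leq C_\theta\,N^{Q-\theta}\,\|F(N\,\cdot)\|_{N,2}^2\,\min\{1,(Nt)^2\}^2,
\]
where the $\min\{1,(Nt)^2\}$ factor is extracted from the multiplier factor $1-\exp(-t^2\lambda_{pq}^L)$ via \eqref{eq:avestimate}, not inherited from Corollary \ref{cor:nonsharp}. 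Since $(1+N\weight)^{2\alpha}\lesssim 1+(N\weight)^{2\alpha}$ and $V(N^{-1})\sim N^{-Q}$, combining the instances $\theta=0$ and $\theta=2\alpha$ (here $2\alpha<1$ is where $\alpha<1/2$ enters) gives the claim; no dyadic decomposition, no interpolation, and no smoothness of $F$ is used.
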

\begin{proof}
We prove only \eqref{eq:weightedplancherel_approx}, the other estimate being similar and easier. By Proposition \ref{prp:spectralinfo} and \cite[Lemma 4.3]{casarino_spectral_sphere}, $K_{F(\sqrt\boxb) \,(1-A_t)}$ is a ``kernel polynomial'' in the sense of \cite[\S 5]{casarino_spectral_sphere}, which satisfies the assumptions of \cite[Proposition 5.3]{casarino_spectral_sphere}. Hence, for all $\theta \in [0,1)$,
\[\begin{split}
\int_{\Sphere} &|K_{F(\sqrt\boxb) \,(1-A_t)}(x,y)|^2 \, \weight(x,y)^\theta \,dx \\
&\leq C_\theta \, N^{Q-1-\theta} \sum_{j=2}^N \max\{ |F(\sqrt{\lambda_{pq}^{\boxb}}) (1-\exp(-t^2 \lambda_{pq}^L))|^2 \tc (j-1)^2 \leq \lambda_{pq}^{\boxb} \leq j^2 \} \\
&\leq C_\theta \, N^{Q-1-\theta} (1-e^{-(n+1) t^2 N^2}) \, \sum_{j=2}^N \max\{ |F(\sqrt{\lambda_{pq}^{\boxb}})|^2 \tc (j-1)^2 \leq \lambda_{pq}^{\boxb} \leq j^2 \}
\end{split}\]
where we have used \eqref{eq:avestimate}. As in \cite[proof of Theorem 1.1, p.\ 25 top]{casarino_spectral_sphere}, we then obtain that, for all $\theta \in [0,1)$,
\[
\int_{\Sphere} |K_{F(\sqrt\boxb) \,(1-A_t)}(x,y)|^2 \, \weight(x,y)^\theta \,dx \leq C_\theta\, N^{Q-\theta} \, \|F(N\,\cdot)\|_{N,2}^2 \, \max\{1,(Nt)^2\}^2.
\]
Since $V(N^{-1}) \sim N^{-Q}$ for $N \in \N \setminus \{0\}$, the conclusion follows by combining the two instances of the above estimate corresponding to $\theta =0$ and $\theta = 2\alpha$.
\end{proof}

Fix $\xi \in C^\infty_c((-1/16,1/16))$ nonnegative with
\begin{equation}\label{eq:moments}
\int_\R \xi(t) \,dt = 1 \qquad\text{and}\qquad \int_\R t^k \,\xi(t) \,dt = 0 \text{ for $k=1,\dots,2Q$}.
\end{equation}

As in \cite{cowling_spectral_2001,duong_plancherel-type_2002}, we first deal with the operator $(F * \xi)(\boxb)$ corresponding to a smoothened version of the multiplier $F$.

\begin{prp}\label{prp:improvedweightedconv}
The following holds.
\begin{enumerate}[label=(\roman*)]
\item\label{en:improvedweightedconv_l2} For all $\beta \geq 0$, $\alpha \in [0,1/2)$, $N \in \N \setminus \{0\}$, $\epsilon,t > 0$, and $F : \R \to \C$ with $\supp F \subseteq [N/4,3N/4]$,
\[
\vvvert K_{(F*\xi)(\sqrt\boxb) \,(1-A_t)} \vvvert_{2,\beta,\alpha,N^{-1}} \leq C_{\beta,\epsilon} \, \|F(N\,\cdot) \|_{L_{\beta+\epsilon}^2(\R)} \, \min\{1,(Nt)^2\}.
\]
\item\label{en:improvedweightedconv_l1} For all $\beta \geq 0$, $N \in \N \setminus \{0\}$, $\epsilon,r,t > 0$, and $F : \R \to \C$ with $\supp F \subseteq [N/4,3N/4]$,
\begin{multline*}
\esssup_y \int_{\dist(x,y) \geq r} |K_{(F*\xi)(\sqrt\boxb) \,(1-A_t)}(x,y)| \,dx \\
\leq C_{\beta,\epsilon}\,  \|F(N\,\cdot) \|_{L_{\beta+d/2+\epsilon}^2(\R)} \min\{1,(Nt)^2\} (1+Nr)^{-\beta}.
\end{multline*}
\item\label{en:improvedweightedconv_wl1} For all $\epsilon > 0$ and $F : \R \to \C$ with $\supp F \subseteq [1/2,\infty)$,
\[
\|(F*\xi)(\sqrt{\boxb})\|_{L^1 \to L^{1,\infty}} \leq C_{\epsilon} \, \| F \|_{L^2_{d/2+\epsilon,\sloc}}.
\]
\end{enumerate}
\end{prp}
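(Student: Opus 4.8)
The plan is to mimic the standard argument of \cite{cowling_spectral_2001,duong_plancherel-type_2002}, but with the dyadic summation parameter tuned so that the weighted Plancherel estimates of Proposition~\ref{prp:weightedplancherel} can be brought to bear, thereby gaining the improvement from $Q/2$ to $d/2 = (2n-1)/2$ in the smoothness exponent. First I would prove part \ref{en:improvedweightedconv_l2}. Decompose $F = \sum_k F_k$ into dyadic pieces with $\supp F_k \subseteq [2^{k-1}, 2^{k+1}]$, say by means of a smooth partition of unity; since $F$ is supported in $[N/4, 3N/4]$ only boundedly many $k$ with $2^k \sim N$ contribute. For each piece, convolving with $\xi$ (supported in $(-1/16,1/16)$) essentially preserves the localization scale up to harmless tails, because $\xi$ has $2Q$ vanishing moments by \eqref{eq:moments}; the vanishing moments are what let us absorb the polynomial weight $(1+\dist/\,N^{-1})^\beta$ and the bi-weight $(1+\weight/N^{-1})^\alpha$ into rapidly decaying error terms. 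The core estimate is then \eqref{eq:weightedplancherel_approx}, which controls $\vvvert K_{F_k(\sqrt\boxb)(1-A_t)} \vvvert_{2,0,\alpha,N^{-1}}$ by $\|F_k(N\,\cdot)\|_{N,2} \min\{1,(Nt)^2\}$; summing in $k$ and converting the $\|\cdot\|_{N,2}$ norms into the $L^2_{\beta+\epsilon}$ Sobolev norm of $F(N\,\cdot)$ (a routine interpolation/embedding, as in \cite[\S 2]{casarino_spectral_sphere}) yields \ref{en:improvedweightedconv_l2}. The extra $\epsilon$ loss in the exponent comes from this conversion and from summing the geometric-type tails.

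Next, part \ref{en:improvedweightedconv_l1} follows from \ref{en:improvedweightedconv_l2} by Cauchy--Schwarz, exactly as in the proof of Corollary~\ref{cor:nonsharp}\ref{en:nonsharp_l1}, except that now we exploit the second weight $\weight$. Writing the $L^1$ integral over $\{\dist(x,y)\geq r\}$ and inserting $1 = (1+\dist/N^{-1})^{-\beta}(1+\weight/N^{-1})^{-\alpha} \cdot (1+\dist/N^{-1})^{\beta}(1+\weight/N^{-1})^{\alpha}$, Cauchy--Schwarz in $x$ splits the integral into $\vvvert K \vvvert_{2,\beta',\alpha,N^{-1}}$ (for a slightly larger $\beta'$) times the square root of $\int_\Sphere (1+\dist(x,y)/N^{-1})^{-2\beta'}(1+\weight(x,y)/N^{-1})^{-2\alpha}\,dx$. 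Here is where the gain occurs: by \eqref{eq:weightsint}, this integral is bounded by $C\, V(N^{-1})$ provided $2\beta' + 2\alpha > Q$ and $2\alpha < 2n-2$, which —since $\alpha$ may be taken close to $1/2$— only forces $2\beta' > Q - 1 = 2n-2 > d - 1$, i.e.\ effectively $\beta' > (d-1)/2$ after also accounting for the $(1+Nr)^{-\beta}$ decay we want to extract from the part of the $\dist$-weight beyond $\beta'$. Balancing these exponents and combining with \ref{en:improvedweightedconv_l2} gives the stated $L^2_{\beta + d/2 + \epsilon}$ bound with the $(1+Nr)^{-\beta}$ factor; the $\min\{1,(Nt)^2\}$ factor is carried along untouched.

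Finally, part \ref{en:improvedweightedconv_wl1} is obtained by feeding \ref{en:improvedweightedconv_l1} into the Calder\'on--Zygmund / singular-integral machinery of \cite{duong_singular_1999} (see also \cite[Theorem 3.3]{cowling_spectral_2001}), with the approximate identity $A_t = \exp(-t^2 L)$; the Poisson-type bounds \eqref{eq:uniforml1bound} and the off-diagonal decay needed for that machinery hold for $A_t$ by Proposition~\ref{prp:gaussianestimates}, exactly as noted in the proof of Corollary~\ref{cor:nonsharp}\ref{en:nonsharp_wl1}. Concretely: given $F$ supported in $[1/2,\infty)$, decompose $F = \sum_{k \geq -1} F\chi_k$ into pieces at dyadic scales $N \sim 2^k$; for each piece the factor $\min\{1,(Nt)^2\}$ provides the summability in $k$ once one takes $t$ comparable to the Calder\'on--Zygmund cube side, and the $(1+Nr)^{-\beta}$ decay in \ref{en:improvedweightedconv_l1} (with $\beta$ chosen $> 0$) provides the integrability of the kernel away from the diagonal, which is precisely condition \eqref{eq:gencz}. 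Summing the pieces and invoking \cite[Proof of Theorem 3.1]{duong_plancherel-type_2002} (or the argument for Theorem~\ref{thm:multipliers}\ref{en:multipliers_mh}) delivers the weak type $(1,1)$ bound with the local Sobolev norm $\|F\|_{L^2_{d/2+\epsilon,\sloc}}$.

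The main obstacle I anticipate is bookkeeping the three-way balance of exponents in \ref{en:improvedweightedconv_l1}: the constraint $\alpha < 1/2$ in Proposition~\ref{prp:weightedplancherel} is exactly what is needed (no more, no less) to push the smoothness threshold from $Q/2$ down to $d/2$ via \eqref{eq:weightsint}, and one must verify that the loss is genuinely only $\epsilon$ and not, say, $1/2$. A secondary technical point is justifying that convolution with $\xi$ does not spoil the tight support hypothesis $\supp F \subseteq [N/4, 3N/4]$ needed to invoke \eqref{eq:weightedplancherel_approx}, which is where the $2Q$ vanishing moments of $\xi$ in \eqref{eq:moments} are essential for making the tail contributions negligible.
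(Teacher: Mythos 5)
Parts \ref{en:improvedweightedconv_l1} and \ref{en:improvedweightedconv_wl1} of your plan coincide with the paper's argument: the H\"older/Cauchy--Schwarz step with the bi-weight and \eqref{eq:weightsint}, taking $\alpha$ close to $1/2$ so that only $\beta+d/2+\epsilon/2+\alpha>Q/2$ is needed, is exactly how the paper deduces \ref{en:improvedweightedconv_l1} from \ref{en:improvedweightedconv_l2}; and \ref{en:improvedweightedconv_wl1} follows from \ref{en:improvedweightedconv_l1} via the Duong--McIntosh singular integral theorem with $A_t=\exp(-t^2L)$, exactly as Corollary \ref{cor:nonsharp}\ref{en:nonsharp_wl1} follows from Corollary \ref{cor:nonsharp}\ref{en:nonsharp_l1}.

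The gap is in part \ref{en:improvedweightedconv_l2}, and it is the crux of the whole proposition. The weighted Plancherel estimate \eqref{eq:weightedplancherel_approx} controls only $\vvvert\cdot\vvvert_{2,0,\alpha,N^{-1}}$, i.e.\ the kernel norm with \emph{no} $\dist$-weight. A dyadic decomposition of $F$ (which is in any case already supported in the single band $[N/4,3N/4]$) and a ``conversion of norms'' on the multiplier side cannot produce the factor $(1+N\dist(\cdot,y))^\beta$ on the kernel side: that weight is dual to smoothness of the multiplier, and the right-hand side of \eqref{eq:weightedplancherel_approx} involves no derivatives of $F$. Nor do the vanishing moments of $\xi$ help here -- they are used only in Proposition \ref{prp:improvedmissingconv}, via \cite[Proposition 4.6]{duong_plancherel-type_2002}, to handle the difference $F-F*\xi$; in part \ref{en:improvedweightedconv_l2} the role of the convolution is instead that $\|(F*\xi)(N\,\cdot)\|_{N,2}\lesssim\|F(N\,\cdot)\|_2$ by \cite[eq.\ (4.9)]{duong_plancherel-type_2002}, a step your sketch also omits. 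The paper's proof of \ref{en:improvedweightedconv_l2} rests on an interpolation between two families of estimates: on one side, Corollary \ref{cor:nonsharp}\ref{en:nonsharp_l2} together with Young's inequality, Sobolev embedding and \eqref{eq:weightsineq} yields the fully weighted bound $\vvvert\cdot\vvvert_{2,\beta,\alpha,N^{-1}}\lesssim\|F(N\,\cdot)\|_{L^2_{\beta+\alpha+1/2+\epsilon}}$, which carries the $\dist$-weight but loses $\alpha+1/2$ derivatives; on the other side, \eqref{eq:weightedplancherel_approx} gives the $\beta=0$ bound with only $\|F(N\,\cdot)\|_2$ on the right. Analytic interpolation in the weight exponent (as in \cite{mauceri_vectorvalued_1990} or \cite{martini_grushin_2012}) removes the $\alpha+1/2$ loss and produces $\|F(N\,\cdot)\|_{L^2_{\beta+\epsilon}}$. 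Without this step you either have no $\dist$-weight at all (so part \ref{en:improvedweightedconv_l1} cannot be run) or, relying on Corollary \ref{cor:nonsharp}\ref{en:nonsharp_l2} alone, you lose roughly one derivative and land back at (indeed above) the non-sharp threshold $Q/2$ -- precisely the ``loss of $1/2$'' you flag as a worry but never resolve.
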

\begin{proof}
\ref{en:improvedweightedconv_l2}. By Corollary \ref{cor:nonsharp}\ref{en:nonsharp_l2} and Young's inequality, we obtain that
\[
\vvvert K_{(F*\xi)(\sqrt\boxb) \,(1-A_t)} \vvvert_{2,\beta,N^{-1}} \leq C_{\beta,\epsilon} \|F(N\,\cdot) \|_{L_{\beta+\epsilon}^\infty(\R)} \, \min\{1,(Nt)^2\}.
\]
for all $\beta \geq 0$, $N \in \N \setminus \{0\}$, $\epsilon,t > 0$, and $F : \R \to \C$ with $\supp F \subseteq [N/8,7N/8]$; by Sobolev embedding and the inequality \eqref{eq:weightsineq} we have in particular that
\[
\vvvert K_{(F*\xi)(\sqrt\boxb) \,(1-A_t)} \vvvert_{2,\beta,\alpha,N^{-1}} \leq C_{\beta,\alpha,\epsilon} \|F(N\,\cdot) \|_{L_{\beta+\alpha+1/2+\epsilon}^2(\R)} \, \min\{1,(Nt)^2\}.
\]
for all $\alpha,\beta \geq 0$, $N \in \N \setminus \{0\}$, $\epsilon,t > 0$, and $F : \R \to \C$ with $\supp F \subseteq [N/8,7N/8]$.

On the other hand, by Proposition \ref{prp:weightedplancherel} and \cite[eq.\ (4.9)]{duong_plancherel-type_2002} we have that
\[
\vvvert K_{(F*\xi)(\sqrt\boxb) \,(1-A_t)} \vvvert_{2,0,\alpha,N^{-1}} \leq C_\alpha \|F(N \,\cdot) \|_{2} \, \min\{1,(Nt)^2\}
\]
for all $\alpha \in [0,1/2)$, $N \in \N \setminus \{0\}$, $t > 0$, and $F : \R \to \C$ with $\supp F \subseteq [N/8,7N/8]$. Interpolation of the last two inequalities (cf., e.g., \cite[proof of Lemma 1.2]{mauceri_vectorvalued_1990} or \cite[proof of Proposition 13]{martini_grushin_2012}) gives the conclusion.

\ref{en:improvedweightedconv_l1}. Note that
\begin{multline*}
\esssup_y \int_{\dist(x,y) \geq r} |K_{(F*\xi)(\sqrt\boxb) \,(1-A_t)}(x,y)| \,dx \\
\leq \vvvert K_{(F*\xi)(\sqrt\boxb) \,(1-A_t)} \vvvert_{1,\beta,0,N^{-1}} (1+Nr)^{-\beta}.
\end{multline*}
Moreover, for all nonnegative $\alpha \in (1/2-\epsilon/2,1/2)$, since $d/2+\epsilon/2+\alpha > d/2+1/2=Q/2$, H\"older's inequality and \eqref{eq:weightsint} give that
\[
\vvvert K_{(F*\xi)(\sqrt\boxb) \,(1-A_t)} \vvvert_{1,\beta,0,N^{-1}} \leq C_{\alpha,\beta,\epsilon} \, \vvvert  K_{(F*\xi)(\sqrt\boxb) \,(1-A_t)} \vvvert_{2,\beta+d/2+\epsilon/2,\alpha,N^{-1}}.
\]
The conclusion follows by \ref{en:improvedweightedconv_l2}.

\ref{en:improvedweightedconv_wl1}. This follows from \ref{en:improvedweightedconv_l1} in the same way as Corollary \ref{cor:nonsharp}\ref{en:nonsharp_wl1} follows from Corollary \ref{cor:nonsharp}\ref{en:nonsharp_l1}.
\end{proof}

What is missing is $(F-F*\xi)(\sqrt{\boxb})$; however this part satisfies even better estimates.

\begin{prp}\label{prp:improvedmissingconv}
For all $\epsilon > 0$ and $F : \R \to \C$ with $\supp F \subseteq [1/2,\infty)$,
\[
\|(F-F*\xi)(\sqrt{\boxb})\|_{L^1 \to L^1} \leq C_{\epsilon} \, \| F \|_{L^2_{d/2+\epsilon,\sloc}}.
\]
\end{prp}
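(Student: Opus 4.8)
The plan is to exploit the fact that $F - F*\xi$ is a multiplier that vanishes like a high power at the origin, so that, after a dyadic decomposition, the pieces carry a strong gain in the localization parameter that more than compensates for the (slow) spatial decay one loses by passing from an $L^2$ to an $L^1$ kernel estimate, and in fact yields an $L^1 \to L^1$ bound without any need for an approximate identity. Concretely, I would fix $\psi \in C^\infty_c$ supported in an annulus with $\sum_{k \in \Z} \psi(2^{-k}\lambda) = 1$ for $\lambda \neq 0$, and for each $k \geq 0$ set $F_k(\lambda) = F(\lambda)\psi(2^{-k}\lambda)$, so that $F(\sqrt{\boxb}) = \sum_k F_k(\sqrt{\boxb})$ on the range of $\chr_{[1/4,\infty)}(\sqrt{\boxb})$ (recall $\supp F \subseteq [1/2,\infty)$, so only $k \geq 0$ contribute and the $k=0$ term is harmless). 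Since $\widehat{\xi}$ is smooth with $\widehat{\xi}(0) = 1$ and all derivatives up to order $2Q$ vanishing at $0$ by \eqref{eq:moments}, the function $1 - \widehat{\xi}$ vanishes to order $2Q+1$ at the origin, and hence writing $(F_k - F_k * \xi)(\lambda)$ via the Fourier transform one sees it is the convolution of $F_k$ with a kernel that decays rapidly and exhibits the crucial cancellation; equivalently, a direct computation shows $\|(F_k - F_k*\xi)(2^k \cdot)\|_{L^\infty_s} \lesssim 2^{-k(2Q+1)} \|F_k(2^k\cdot)\|_{L^2_{s+1/2}}$ plus a rapidly decaying tail, for any fixed $s$.

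The main estimate is then a kernel bound for each piece. I would apply Corollary \ref{cor:nonsharp}\ref{en:nonsharp_l2} (with $t \to 0$, or rather use the analogous bound without the factor $(1-A_t)$, which is implicit in the same Plancherel argument) together with the weighted Plancherel estimate \eqref{eq:weightedplancherel_pure} of Proposition \ref{prp:weightedplancherel}, to get, for $N \sim 2^k$,
\[
\vvvert K_{(F_k - F_k*\xi)(\sqrt\boxb)} \vvvert_{2,\beta,\alpha,N^{-1}} \leq C_{\beta,\alpha,\epsilon} \, 2^{-k(2Q+1)} \|F_k(N \cdot)\|_{L^2_{\beta + \alpha + 1 + \epsilon}(\R)},
\]
using the gain from $1-\widehat\xi$ vanishing to high order. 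Then, exactly as in the proof of Proposition \ref{prp:improvedweightedconv}\ref{en:improvedweightedconv_l1}, I pass to an $L^1$ kernel bound by \eqref{eq:weightsint}, choosing $\alpha \in (1/2 - \epsilon/2, 1/2)$ and $\beta$ so that $\beta + d/2 + \epsilon/2 + \alpha > Q/2$; taking $\beta = 0$ and integrating over all of $\Sphere$ (no outer annulus $\dist(x,y) \geq r$ is needed here, since we want $L^1 \to L^1$) gives
\[
\sup_y \int_\Sphere |K_{(F_k - F_k*\xi)(\sqrt\boxb)}(x,y)| \, dx \leq C_\epsilon \, 2^{-k(2Q+1)} \, 2^{kC'} \, \|F_k(N\cdot)\|_{L^2_{d/2+\epsilon}(\R)},
\]
where $2^{kC'}$ accounts for the (bounded) loss of Sobolev order and the factor $V(N^{-1})$-normalization; the exponent $2Q+1$ was chosen precisely so that $-( 2Q+1) + C' < 0$, whence the geometric series in $k$ converges.

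Summing over $k \geq 0$ and using Schur's test (the kernel estimate is symmetric in the roles of $x,y$ up to $\group{U}(n)$-invariance, or one simply notes $L^1 \to L^1$ boundedness follows from a uniform bound on $\sup_y \int |K(x,y)| \, dx$ for a positive kernel majorant) yields
\[
\|(F - F*\xi)(\sqrt\boxb)\|_{L^1 \to L^1} \leq C_\epsilon \sum_{k \geq 0} 2^{-k} \sup_j \|F \chr_{[2^{j-1},2^{j}]}(N\cdot)\|_{L^2_{d/2+\epsilon}} \lesssim C_\epsilon \|F\|_{L^2_{d/2+\epsilon,\sloc}},
\]
after replacing each $\|F_k(N\cdot)\|_{L^2_{d/2+\epsilon}}$ by the supremum defining the local Sobolev norm (here one uses that only boundedly many dyadic blocks of $F$ interact with each $F_k$, since $\psi$ has compact support in an annulus). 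The main obstacle, to my mind, is bookkeeping the order of vanishing: one must verify that the number of moments prescribed in \eqref{eq:moments} — here $2Q$ — genuinely dominates all the Sobolev-order losses incurred when converting $L^\infty$ bounds to $L^2$ bounds, passing from weighted $L^2$ to $L^1$ via \eqref{eq:weightsint}, and accounting for the $V(N^{-1})$ normalizations across dyadic scales; getting the arithmetic of these exponents to close with room to spare is the crux, but it is exactly the reason $2Q$ (rather than, say, $Q/2$) moments were imposed at the outset.
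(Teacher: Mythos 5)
The central step of your argument --- the claim that
\[
\|(F_k - F_k*\xi)(2^k\,\cdot)\|_{L^\infty_s} \lesssim 2^{-k(2Q+1)}\,\|F_k(2^k\,\cdot)\|_{L^2_{s+1/2}},
\]
and the ensuing kernel bound with the factor $2^{-k(2Q+1)}$ --- is false, and this is a genuine gap rather than a bookkeeping issue. Writing $G_k = F_k(2^k\,\cdot)$ and $\xi_k = 2^k\xi(2^k\,\cdot)$, one has $(F_k - F_k*\xi)(2^k\,\cdot) = G_k - G_k*\xi_k$, whose Fourier transform is $\widehat{G_k}(\tau)\,(1-\widehat{\xi}(2^{-k}\tau))$. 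The vanishing of $1-\widehat{\xi}$ to order $2Q+1$ at the origin yields $|1-\widehat{\xi}(2^{-k}\tau)| \lesssim \min\{(2^{-k}|\tau|)^{2Q+1},1\} \leq (2^{-k}|\tau|)^{\sigma}$ only for $\sigma \leq 2Q+1$, so the decay in $k$ one can extract is $2^{-k\sigma}$ \emph{at the price of $\sigma$ additional derivatives on $G_k$}: the moment condition \eqref{eq:moments} caps the exchange rate, it does not produce decay by itself. Since $F$ only has $d/2+\epsilon$ derivatives in total, and your right-hand side already spends $\beta+\alpha+1+\epsilon$ of them on the spatial weights, there is no regularity left to convert into decay, and the factor $2^{-k(2Q+1)}$ is simply not there. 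Without it the geometric series in $k$ does not close, because the passage from the weighted $L^2$ kernel bound to the $L^1$ bound over all of $\Sphere$ costs a factor $\sim 2^{k(Q/2-\alpha)}$ that must be beaten.

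The paper's proof resolves exactly this tension by \emph{not} using the Sobolev-weighted estimates of Corollary \ref{cor:nonsharp} for this term. Instead it uses the weighted Plancherel estimate \eqref{eq:weightedplancherel_pure}, which controls $\vvvert K_{H(\sqrt{\boxb})}\vvvert_{2,0,\alpha,2^{-k}}$ by the zeroth-order norm $\|H(2^k\,\cdot)\|_{2^k,2}$, requiring no derivatives of the multiplier at all. The entire budget of $d/2+\epsilon'$ derivatives of $F$ can then be converted into the decay factor $2^{-k(d/2+\epsilon')}$ via \cite[Proposition 4.6]{duong_plancherel-type_2002} applied to $F_k(2^k\,\cdot) - F_k(2^k\,\cdot)*\xi_k$ (this is where \eqref{eq:moments} enters: one only needs the number of moments to exceed $d/2+\epsilon'$, which $2Q$ comfortably does). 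The loss from passing to $L^1$ is $2^{k(Q/2-\alpha)}$ by H\"older and \eqref{eq:weightsint2}, and the net exponent $Q/2-\alpha-d/2-\epsilon' = 1/2-\alpha-\epsilon'$ is negative precisely because $\alpha$ can be taken arbitrarily close to the critical weight exponent $1/2$ in Proposition \ref{prp:weightedplancherel}. So the mechanism that makes the sum converge is the interplay between the weight $\weight$ (worth $\alpha < 1/2$) and the smoothing gain (worth $d/2+\epsilon'$), not a large number of vanishing moments; if you repair your argument you will need to route it through \eqref{eq:weightedplancherel_pure} in this way.
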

\begin{proof}
We follow the argument in \cite[proof of Theorem 3.6]{cowling_spectral_2001} (cf.\ also \cite[proof of Theorem 3.2]{duong_plancherel-type_2002}).

Set $\epsilon' = \min\{1/4,\epsilon\}$ and choose $\alpha \in [0,1/2)$ such that $d/2+\epsilon'+\alpha > Q/2$. Choose $\eta \in C^\infty_c((0,\infty))$ such that $\supp \eta \subseteq [1/4,3/4]$ and $\sum_{k \in \Z} \eta_k = 1$ on $(0,\infty)$, where $\eta_k = \eta(2^{-k} \cdot)$. In particular, if $F_k = \eta_k F$, then $F = \sum_{k \in \N} F_k$.
Set $\xi_k = 2^k \xi(2^k \cdot)$.
By \eqref{eq:l1norm}, H\"older's inequality, \eqref{eq:weightsint2}, \eqref{eq:srvol}, the ``Plancherel estimate'' \eqref{eq:weightedplancherel_pure}, and \cite[Proposition 4.6]{duong_plancherel-type_2002} (which applies because of \eqref{eq:moments}), for all $k \in \N$,
\[\begin{split}
\|(F_k-F_k*\xi)(\sqrt{\boxb})\|_{L^1 \to L^1} &= \vvvert (F_k-F_k*\xi)(\sqrt{\boxb}) \vvvert_{1,0,0,2^{-k}} \\ 
&\leq C_\epsilon \, 2^{-k\alpha} \, V(2^{-k})^{-1/2} \, \vvvert (F_k-F_k*\xi)(\sqrt{\boxb}) \vvvert_{2,0,\alpha,2^{-k}} \\
&\leq C_\epsilon \, 2^{k(Q/2-\alpha)} \, \| F_k(2^k \cdot) - F_k(2^k \cdot) * \xi_k \|_{2^k,2} \\
&\leq C_\epsilon \, 2^{k(Q/2-\alpha-d/2-\epsilon')} \, \| F_k(2^k \cdot) \|_{L^2_{d/2+\epsilon'}} \\
&= C_\epsilon \, 2^{k(Q/2-\alpha-d/2-\epsilon')} \, \| F(2^k \cdot) \, \eta \|_{L^2_{d/2+\epsilon}}.
\end{split}\]
The conclusion follows by summing over $k \in \N$.
\end{proof}

\begin{proof}[Proof of Theorem \ref{thm:m_sphere}]
By combining Propositions \ref{prp:improvedweightedconv}\ref{en:improvedweightedconv_wl1} and \ref{prp:improvedmissingconv}, we have that
\[
\|F(\sqrt{\boxb})\|_{L^1 \to L^{1,\infty}} \lesssim  \| F \|_{L^2_{d/2+\epsilon,\sloc}}
\]
for all $\epsilon > 0$ and $F : \R \to \C$ with $\supp F \subseteq [1/2,\infty)$. On the other hand, since the eigenvalues $\lambda_{pq}^{\boxb}$ of $\boxb$ are nonnegative integers by \eqref{eq:avB} and $\chr_{\{0\}}(\boxb)$ is of weak type $(1,1)$ (see Corollary \ref{cor:nonsharp}\ref{en:nonsharp_wl1}),
\[
\|F(\sqrt{\boxb})\|_{L^1 \to L^{1,\infty}} = \| F(0) \, \chr_{\{0\}}(\boxb)\|_{L^1 \to L^{1,\infty}} \lesssim |F(0)|
\]
for all $F : \R \to \C$ with $\supp F \subseteq (-\infty,1)$. Hence a partition-of-unity argument yields
\[
\|F(\sqrt{\boxb})\|_{L^1 \to L^{1,\infty}} \leq C_{\epsilon} \,  \| F \|_{L^2_{d/2+\epsilon,\sloc}}.
\]
Interpolation of this bound with the trivial estimate
\[
\|F(\sqrt{\boxb})\|_{L^2 \to L^{2}} \leq \|F\|_{\infty} \leq C_{\epsilon} \,  \| F \|_{L^2_{d/2+\epsilon,\sloc}}
\]
gives $L^p$-boundedness of $F(\sqrt{\boxb})$ for $p \in (1,2)$; the case $p \in (2,\infty)$ follows by applying this result to $\overline{F}$ in place of $F$.
\end{proof}

\section{Transplantation to the Heisenberg group and sharpness}

In this section we prove Corollary \ref{cor:m_heisenberg} and Theorem \ref{thm:sharp}
via transplantation. 
Our approach is an extension of the technique of \cite{kenig_divergence_1982}, which is based on perturbation theory for self-adjoint operators. In \cite{kenig_divergence_1982} scale-invariant $L^p$-bounds for the functional calculus of a self-adjoint differential operator $D$ on a $d$-manifold $M$ are transplanted to analogous bounds for the homogeneous constant-coefficient differential operator $D_0$ on $\R^d$ corresponding to the principal symbol of $D$ (with respect to a choice of coordinates) at an arbitrary point of $M$.

The results of \cite{kenig_divergence_1982} do not apply to our situation, because the Kohn Laplacian on the Heisenberg group (that would play the role of $D_0$ above) is not a constant-coefficient operator. 
Indeed the method of ``freezing the coefficients'' is not appropriate for the analysis of operators such as the Kohn Laplacian on a CR manifold (see, e.g., \cite{folland_estimates_1974,stanton_heat_1984,beals_calculus_1988}). For this reason we introduce in Definition \ref{dfn:localmodel} below a generalization of this method, based on a system of (possibly nonisotropic) dilations. Correspondingly, we prove a general transplantation result (Theorem \ref{thm:transplantation}) for an arbitrary self-adjoint differential operator acting on sections of a vector bundle of a smooth manifold.
Finally we apply the general result to the Kohn Laplacian.

Manifolds are assumed to be smooth and second-countable. Vector bundles over manifolds are assumed to be smooth as well. By a hermitian vector bundle we mean a complex vector bundle with a (smooth) hermitian metric. A smooth measure on a manifold is a positive Borel measure whose density with respect to the Lebesgue measure in all coordinate charts is smooth and nowhere vanishing.

If $\FE$ is a vector bundle over $M$ and $U \subseteq M$, we denote by $\FE|_U$ the vector bundle over $U$ obtained by restriction. By $\Trv_d^k$ we denote the trivial bundle over $\R^d$ with fiber $\C^k$, equipped with the standard hermitian metric. Sections of $\Trv_d^k$ will be identified with $\C^k$-valued functions.

By a system of dilations $(\delta_R)_{R>0}$ on $\R^d$ we mean a family of linear automorphisms of $\R^d$ of the form $\delta_R = \exp((\log R) A)$ for some positive self-adjoint linear endomorphism $A$ of $\R^d$; note that $\det \delta_R = R^Q$, where $Q = \tr A$ is the ``homogeneous dimension'' associated with the system of dilations.

\begin{dfn}\label{dfn:localmodel}
Let $M$ be a $d$-manifold, equipped with a smooth measure $\mu$. Let $\FE$ be a hermitian vector bundle of rank $k$ over $M$. Let $D : C^\infty(\FE) \to C^\infty(\FE)$ be a differential operator.
 We say that a differential operator $D_0 : C^\infty(\Trv^k_d) \to C^\infty(\Trv^k_d)$ is a \emph{local model} for $D$ at the point $x \in M$ of order $\gamma \in \R$ with respect to a system of dilations $(\delta_R)_{R>0}$ on $\R^d$ if there exist a coordinate chart $\phi : U \to V \subseteq \R^d$ of $M$ centered at $x$ and an orthonormal frame $X = (X_l)_{l=1}^k$ for $\FE|_U$ such that, if we define
\begin{itemize}
\item $\Psi : C^\infty(\FE|_U) \to C^\infty(\Trv^k_d|_V)$ by
\begin{equation}\label{eq:coordinate_map}
\Psi \left( \sum_{l=1}^k f_l X_l \right) = ((f_l \circ \phi^{-1}) \, a^{1/2})_{l=1}^k,
\end{equation}
where $a \in C^\infty(V)$ is the density with respect to the Lebesgue measure of the push-forward of $\mu$ via $\phi$, and
\item $D_R : C^\infty(\Trv^k_d|_{\delta^{-1}_R(V)}) \to C^\infty(\Trv^k_d|_{\delta^{-1}_R(V)})$, for all $R > 0$, as the differential operator given by
\[
D_R f = R^{\gamma} (\Psi D \Psi^{-1}(f \circ \delta^{-1}_{R})) \circ \delta_R,
\]
\end{itemize}
then,
for all $f \in C^\infty_c(\Trv^k_d)$,
\begin{equation}\label{eq:stronglimit}
\lim_{R \downarrow 0} D_{R} f = D_0 f \qquad\text{in $L^2$}.
\end{equation}
\end{dfn}

Note that the domains $C^\infty(\Trv^k_d|_{\delta^{-1}_R(V)})$ and $C^\infty(\Trv^k_d)$ of the differential operators $D_R$ and $D_0$ may be different. However, if we identify $C^\infty_c(\Trv^k_d|_{\delta^{-1}_R(V)})$ with a subspace of $C^\infty_c(\Trv^k_d)$, then, for all $f \in C^\infty_c(\Trv^k_d)$, we have that $\supp f \subseteq \delta^{-1}_R(V)$ for all sufficiently small $R> 0$, so $D_R f$ is well defined as an element of $C^\infty_c(\Trv^k_d)$ for such small $R$ and the condition \eqref{eq:stronglimit} makes sense.

Similar considerations show that the coordinate chart and the local orthonormal frame in Definition \ref{dfn:localmodel} could be replaced with their restrictions to any smaller open neighborhood of $x$, without changing the limit operator $D_0$ in \eqref{eq:stronglimit}.	Hence two differential operators having the same germ at $x$ would have the same local model.

The definition \eqref{eq:coordinate_map} of the map $\Psi$ involves the density $a$ of the measure on $M$, in such a way that $\Psi|_{C^\infty_c(\FE|_U)}$ extends to an isometry $L^2(\FE|_U) \to L^2(\Trv^k_d|_V)$. This corresponds to the fact that our functional calculi, based on the spectral theorem, are initially defined on $L^2$ and equivariant with respect to $L^2$-isometries. Similarly, we require $L^2$-convergence in \eqref{eq:stronglimit}. However, in applications (cf.\ proof of Proposition \ref{prp:boxb_local}), it may happen that the convergence in \eqref{eq:stronglimit} holds in a stronger sense and that the limit operator $D_0$ is independent of the positive smooth function $a$ in \eqref{eq:coordinate_map}.

The system of dilations $(\delta_R)_{R>0}$ is crucial in determining the local model. If we take isotropic dilations $\delta_R(v) = Rv$ and let $\gamma$ be the order of $D$ as a differential operator, then the local model $D_0$ is nothing else than the principal part of the constant-coefficient differential operator obtained by freezing the coefficients of $D$ at $x$ in the chosen coordinates. Hence Theorem \ref{thm:transplantation} extends some results of \cite{kenig_divergence_1982}. On the other hand, our applications involve nonisotropic dilations.

In what follows, we denote by $C_0(\R)$ the space of complex-valued continuous functions on $\R$ vanishing at infinity. Strong convergence of operators is always understood in the sense of the strong $L^2$ operator topology.

\begin{thm}\label{thm:transplantation}
Let $M$ be a $d$-manifold, equipped with a smooth measure. Let $\FE$ be a hermitian vector bundle of rank $k$ over $M$. Let $D : C^\infty(\FE) \to C^\infty(\FE)$ be a formally self-adjoint differential operator. Suppose that $D$ has a local model $D_0 : C^\infty(\Trv^k_d) \to C^\infty(\Trv^k_d)$ at some point $x \in M$ of order $\gamma \in \R$ with respect to some dilations $(\delta_R)_{R>0}$ on $\R^d$. Assume that $D_0$ is essentially self-adjoint on $C^\infty_c(\Trv^k_d)$ and denote its unique self-adjoint extension by $D_0$ as well. Let $\tilde D$ be any self-adjoint extension of $D$. Then, for all $F \in C_0(\R)$ and all neighborhoods $U \subseteq M$ of $x$,
\begin{equation}\label{eq:wl1transplantation}
\| F(D_0) \|_{L^1(\Trv^k_d) \to L^{1,\infty}(\Trv^k_d)} \leq \liminf_{r\downarrow 0} \| P_U F(r^\gamma \tilde D) P_U \|_{L^1(\FE) \to L^{1,\infty}(\FE)}
\end{equation}
and, for all $p \in [1,\infty]$,
\begin{equation}\label{eq:lptransplantation}
\| F(D_0) \|_{L^p(\Trv^k_d) \to L^p(\Trv^k_d)} \leq \liminf_{r\downarrow 0} \| P_U F(r^\gamma \tilde D) P_U \|_{L^p(\FE) \to L^p(\FE)},
\end{equation}
where $P_U$ is the operator of multiplication by $\chr_U$. 
\end{thm}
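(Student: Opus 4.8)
The plan is to transfer the analysis to $\R^d$ by conjugating $\tilde D$ with the maps $\Psi$ and the dilations $\delta_R$, and then pass to the limit $R\downarrow0$. First I would fix a coordinate chart $\phi:U\to V$ and an orthonormal frame $X$ realizing the local model, so that $\Psi$ extends to an isometry $L^2(\FE|_U)\to L^2(\Trv^k_d|_V)$; composing with the (isometric, up to the Jacobian factor $R^{Q/2}$) dilation action $f\mapsto R^{Q/2}(f\circ\delta_R)$ on $L^2(\R^d)$, one obtains for each small $R>0$ an isometry $\Theta_R:L^2(\FE|_U)\to L^2(\Trv^k_d|_{\delta_R^{-1}V})$ that intertwines $R^\gamma\tilde D$ (restricted appropriately) with $D_R$. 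Since $\Theta_R$ is an $L^2$-isometry it preserves operator norms of the relevant type; the only subtlety is that it does not exactly intertwine $F(R^\gamma\tilde D)$ with $F(D_R)$ because $\tilde D$ acts on all of $M$, not just $U$. This is precisely why the cutoffs $P_U$ appear: I would show that for $f,g\in C^\infty_c(\Trv^k_d)$ with supports inside $\delta_R^{-1}V$ (which holds for all small $R$), the quantity $\langle F(D_R)f,g\rangle$ equals $\langle F(R^\gamma\tilde D)\Theta_R^{-1}f,\Theta_R^{-1}g\rangle$ only after inserting $P_U$, and the point is to estimate $\langle P_U F(R^\gamma\tilde D)P_U\,\Theta_R^{-1}f,\Theta_R^{-1}g\rangle$.

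The heart of the argument is a strong-resolvent convergence statement: $D_R\to D_0$ in the strong resolvent sense as $R\downarrow0$. Given the hypothesis \eqref{eq:stronglimit}, $D_R f\to D_0 f$ in $L^2$ for every $f\in C^\infty_c(\Trv^k_d)$, and since $D_0$ is essentially self-adjoint on this core, a standard criterion (e.g.\ the Trotter--Kato type result, or Reed--Simon VIII.25) gives strong resolvent convergence of the self-adjoint operators $D_R$ (suitably defined, say as self-adjoint extensions or via the operators $P_{\delta_R^{-1}V}R^\gamma\tilde D P_{\delta_R^{-1}V}$ transported by $\Theta_R$) to $D_0$. Here I must be a little careful: the $D_R$ as written are differential operators on $\delta_R^{-1}V$, not a priori self-adjoint; the clean way is to define $A_R := \Theta_R(R^\gamma\tilde D)\Theta_R^{-1}$ as a self-adjoint operator on $L^2(\Trv^k_d|_{\delta_R^{-1}V})$, observe that on $C^\infty_c$ it agrees with $D_R$, hence $A_R f\to D_0 f$ on the core, and conclude $A_R\to D_0$ in the strong resolvent sense (viewing everything inside the fixed space $L^2(\Trv^k_d)$ by zero-extension). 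Strong resolvent convergence upgrades, via functional calculus, to $F(A_R)\to F(D_0)$ strongly for every $F\in C_0(\R)$, because $C_0(\R)$ is the uniform closure of the algebra generated by the resolvents.

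With strong convergence $F(A_R)\to F(D_0)$ in hand, the inequalities follow by lower semicontinuity of operator norms under strong convergence, combined with duality to handle $L^1\to L^{1,\infty}$ and the $L^p$ cases. For \eqref{eq:lptransplantation}, I would take $f\in C^\infty_c(\Trv^k_d)$ with $\|f\|_p\le1$ and note $\|F(D_0)f\|_p = \lim_{R\downarrow0}\|F(A_R)f\|_p$ (strong $L^2$-convergence plus, after passing to a subsequence, a.e.\ convergence, then Fatou), while $\|F(A_R)f\|_p = \|\Theta_R^{-1}\big(F(A_R)f\big)\|_p$ up to the dilation Jacobian which is an $L^p$-isometry factor, $= R^{-?}\|\cdots\|$; crucially, since $\Theta_R^{-1}f$ is supported in $U$ and $\Theta_R$ is a measure-preserving change of variables in each chart, one gets $\|F(A_R)f\|_{L^p(\Trv^k_d)} = \|\,\chr_U\cdot F(R^\gamma\tilde D)(\chr_U\cdot)\,\Theta_R^{-1}f\|_{L^p(\FE)} \le \|P_U F(r^\gamma\tilde D)P_U\|_{L^p\to L^p}\,\|\Theta_R^{-1}f\|_{L^p}$ with $r=R$, and $\|\Theta_R^{-1}f\|_{L^p}=\|f\|_{L^p}$. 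Taking $\liminf_{R\downarrow0}$ and then the supremum over such $f$ (dense in $L^p$ for $p<\infty$; the case $p=\infty$ by duality/direct argument) yields \eqref{eq:lptransplantation}. The weak-type estimate \eqref{eq:wl1transplantation} is analogous: for each $\lambda>0$ the super-level set measure is lower semicontinuous under a.e.\ convergence (Fatou), so $\lambda\,\mu(\{|F(D_0)f|>\lambda\})\le\liminf_R\lambda\,\mu(\{|F(A_R)f|>\lambda\})$ and the right side is controlled by the transported $L^1\to L^{1,\infty}$ norm of $P_U F(R^\gamma\tilde D)P_U$ times $\|f\|_1$.

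I expect the main obstacle to be the careful bookkeeping around the cutoffs and the non-self-adjointness of the naively-defined $D_R$: one must verify that $P_U$ truly suffices to relate $F(D_R)$ on the shrinking domains to $F(R^\gamma\tilde D)$ on $M$ — i.e.\ that the discrepancy between "restrict then take functional calculus" and "take functional calculus then restrict" is exactly absorbed by sandwiching with $P_U$ — and that the strong-resolvent-convergence machinery can be applied despite the $R$-dependent domains. Once these are set up correctly, the passage to the limit and the semicontinuity arguments are routine, but getting the intertwining and the domain issues precisely right is the delicate point.
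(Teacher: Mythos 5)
Your overall strategy coincides with the paper's: conjugate by the chart/frame isometry and by the dilations, prove strong resolvent convergence of the rescaled operators to $D_0$ using essential self-adjointness of $D_0$ on the core $C^\infty_c(\Trv^k_d)$, upgrade to strong convergence of $F(\cdot)$ for $F\in C_0(\R)$, and conclude by lower semicontinuity of distribution functions and $L^p$ norms. However, the step you yourself single out as ``the delicate point'' is not actually resolved, and the resolution you sketch would fail. Your $A_R=\Theta_R(R^\gamma\tilde D)\Theta_R^{-1}$ makes sense only on $C^\infty_c(\Trv^k_d|_{\delta_R^{-1}(V)})$ (by locality of the differential operator); to form $F(A_R)$ you must choose a self-adjoint extension of this symmetric operator on $L^2(\Trv^k_d|_{\delta_R^{-1}(V)})$, and for no such extension does the identity $\|F(A_R)f\| = \|\chr_U\, F(R^\gamma\tilde D)\,\chr_U\,\Theta_R^{-1}f\|$ (or its sesquilinear-form version) hold: functional calculus does not commute with compression to a subdomain. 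Already for $M=\R$, $U=(0,1)$, $D=-d^2/du^2$, the semigroup or resolvent of a self-adjoint realization on the interval (which carries boundary conditions at $\partial U$) is not the truncation $P_U F(\tilde D)P_U$ of the whole-line operator. So the discrepancy between ``restrict, then take functional calculus'' and ``take functional calculus, then restrict'' is \emph{not} absorbed by sandwiching with $P_U$, and the chain of estimates leading to your $\liminf$ breaks exactly there.

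The paper's fix is to never compress the operator: the chart $\phi$ and frame $X$ are extended from $U$ to an open set $U_*$ of \emph{full measure} in $M$, so that the associated map $\tilde\Psi$ is a unitary from all of $L^2(\FE)$ onto $L^2(\Trv^k_d|_{V_*})$, and one sets $\tilde D_R$ equal to the dilation of $\tilde\Psi\tilde D\tilde\Psi^{-1}\oplus 0$. Then $F(\tilde D_R)$ is \emph{exactly} the conjugate of $F(R^\gamma\tilde D)$, while $\tilde D_R$ still agrees with $D_R$ on $C^\infty_c(\Trv^k_d|_{\delta_R^{-1}(V)})$, so the Kato strong-resolvent-convergence argument applies verbatim. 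The cutoffs $P_U$ enter only at the very end: for small $R$ both $\supp f$ and the compact set $K$ on which the level-set measure (or $L^p$ norm) is evaluated lie in $\delta_R^{-1}(V)$, so only the $U$-to-$U$ block of $F(R^\gamma\tilde D)$ is ever seen. A secondary point you leave open (your ``$R^{-?}$''): $\Psi$ is an $L^2$- but not an $L^p$-isometry for $p\neq 2$ because of the density factor $a^{1/2}$ in \eqref{eq:coordinate_map}; the paper controls this by the factors $A_R=\max_{\delta_R(\tilde K)}a^{1/2}$ and $B_R=\max_{\delta_R(\tilde K)}a^{-1/2}$, whose product tends to $1$ as the supports shrink to $x$. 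That bookkeeping is indeed routine, but only once the global unitary conjugation above is in place.
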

\begin{proof}
The right-hand sides of \eqref{eq:wl1transplantation} and \eqref{eq:lptransplantation} do not increase if we replace $U$ by a smaller neighborhood of $x$. Hence we may assume that $U$ is open and is the domain of a coordinate chart $\phi : U \to V$ centered at $x$ and of an orthonormal frame $X$ for $\FE|_U$ such that the conditions of Definition \ref{dfn:localmodel} are satisfied. We may also assume that the boundary $\partial U$ has zero measure in $M$ and that $V$ is bounded in $\R^d$. Let the map $\Psi : C^\infty(\FE|_U) \to C^\infty(\Trv^k_d|V)$ and the differential operators $D_R$ ($R > 0$)
be as in Definition \ref{dfn:localmodel}. 

 Take an open set $U_* \supseteq U$ of full measure in $M$ and such that $\phi$ and $X$ can be extended to a coordinate chart $\phi_* : U_* \to V_* \subseteq \R^d$ and an orthonormal frame $X_*$ for $\FE|_{U_*}$. To construct such a $U_*$, take a countable open cover $\{U_n\}_{n\in \N}$ of $M$ such that $U_0=U$, each $U_n$ carries coordinates for $M$ and a local frame for $\FE$, and each $\partial U_n$ has zero measure in $M$, and then define $U_* = \bigcup_{n \in \N}(U_n \setminus \overline{\bigcup_{m<n} U_m})$.

 We can then define $\Psi_* : C^\infty(\FE|_{U_*}) \to C^\infty(\Trv^k_d|_{V_*})$ in the same way as $\Psi$ in \eqref{eq:coordinate_map}, with $\phi_*$ and $X_*$ in place of $\phi$ and $X$. In particular  $\Psi_*|_{C^\infty_c(\FE|_{U_*})}$ extends to an isometry $\tilde \Psi : L^2(\FE) \to L^2(\Trv^k_d|{V_*})$.

Correspondingly we extend the differential operators $D_R$ to (densely defined) self-adjoint operators $\tilde D_R$ on $L^2(\Trv^k_d)$ as follows:
\[
\tilde D_R f = R^\gamma (\tilde D_1 (f \circ \delta^{-1}_R)) \circ \delta_R,
\]
where $\tilde D_1$ is the self-adjoint operator on $L^2(\Trv^k_d) = L^2(\Trv^k_d|_{V_*}) \oplus L^2(\Trv^k_d|_{\R^d \setminus V_*})$ given by 
\[
\tilde D_1 = \left(\begin{array}{c|c}
\tilde\Psi \tilde D \tilde\Psi^{-1} & 0 \\\hline
0 & 0 
\end{array}\right).
\]
In particular $\tilde D_R \beta = D_R \beta$ for all $\beta \in C^\infty_c(\Trv^k_d|_{\delta_R^{-1}(V)})$, and
\[
\lim_{R \downarrow 0} \tilde D_{R} f = D_0 f
\]
in $L^2$ for all $f \in C^\infty_c(\Trv^k_d)$. 

Since $D_0$ is essentially self-adjoint on $C^\infty_c(\Trv^k_d)$, by \cite[\S VIII.1.1, Corollary 1.6]{kato_perturbation_1995} we conclude that $\tilde D_{R}$ converges strongly in the generalized sense to $D_0$ as $n \to \infty$.
Consequently, by \cite[\S VIII.1.1, Corollary 1.4]{kato_perturbation_1995}, the resolvents $(\tilde D_{R_n} - \zeta)^{-1}$ converge strongly to $(D_0 - \zeta)^{-1}$ as $n \to \infty$ for all $\zeta \in \C \setminus \R$. An application of the Stone--Weierstra\ss\ theorem to the class of functions $F \in C_0(\R)$ such that
\begin{equation}\label{eq:strongconvergence}
F(\tilde D_{R_n}) \to F(D_0) \quad\text{and}\quad \overline{F}(\tilde D_{R_n}) \to \overline{F}(D_0) \qquad\text{strongly as $n \to \infty$}
\end{equation}
shows that \eqref{eq:strongconvergence} holds for all $F \in C_0(\R)$.

On the other hand, by construction,
\[
F(\tilde D_1) f = \tilde\Psi F(\tilde D) \tilde\Psi^{-1} f
\]
for all $f \in L^2(\Trv^k_d|_V)$, hence
\[
F(\tilde D_R) f = (\tilde\Psi F(R^{\gamma} \tilde D) \tilde\Psi^{-1} (f \circ \delta^{-1}_{R})) \circ \delta_R
\]
for all $f \in L^2(\Trv^k_d|_{\delta_R^{-1}(V)})$.

We now prove \eqref{eq:wl1transplantation}. Since $F(D_0)$ is $L^2$-bounded, for all $f \in L^1 \cap L^2(\Trv^k_d)$ and $\alpha > 0$, if $f_k \in C^\infty_c(\Trv^k_d)$ and $f_k \to f$ in $L^1 \cap L^2$, then
\[
\mu_0(\{ |F(D_0)f| > \alpha \}) \leq \sup_{r>0} \inf_{\epsilon>0} \liminf_{k \to \infty} \mu_0(B_r \cap \{ |F(D_0) f_k| > \alpha/(1+\epsilon) \})
\]
where $\mu_0$ is the Lebesgue measure and $B_r$ is the closed ball of radius $r$ in $\R^d$ centered at the origin. Hence it is sufficient to prove that, for all compact sets $K \subseteq \Heis$, all  $f \in C^\infty_c(\Trv^k_d)$, and all $\alpha > 0$,
\[
\mu_0(K \cap \{ |F(\boxb^{\Heis})f| > \alpha \}) \leq \kappa_F \| f \|_1 / \alpha 
\]
where $\kappa_F = \liminf_{R \downarrow 0} \kappa_{F,R}$ and $\kappa_{F,R} = \| P_U F(R^{\gamma} \tilde D) P_U \|_{L^1(\FE) \to L^{1,\infty}(\FE)}$. On the other hand, by \eqref{eq:strongconvergence} and Markov's inequality,
\[
\mu_0(K \cap \{ |F(D_0)f| > \alpha \}) \leq \inf_{\epsilon>0} \liminf_{R\downarrow 0} \mu_0(K \cap \{ |F(\tilde D_R)f| > \alpha/(1+\epsilon) \}).
\]
Let $R$ be sufficiently small so that $\tilde K := \supp f \cup K \subseteq \delta_R^{-1}(V)$. Let $\mu$ be the measure on $M$. Let $a \in C^\infty(V)$ be as in Definition \ref{dfn:localmodel}, and set $A_R = \max_{\delta_R(\tilde K)} a^{1/2}$, $B_R = \max_{\delta_R(\tilde K)} a^{-1/2}$. Let $Q$ be the homogeneous dimension associated with the dilations $\delta_R$. Then
\[\begin{split}
\mu_0&(K \cap  \{ |F(\tilde D_R)f| > \alpha \})  \\
&= R^{-Q} \mu_0(\delta_R(K) \cap \{ |\tilde\Psi F(R^{\gamma}\tilde D)\tilde\Psi^{-1}(f\circ \delta^{-1}_R)| > \alpha \}) \\
&\leq B_R^2 R^{-Q} \mu (U \cap \{ |F(R^{\gamma}\tilde D)\Psi^{-1}(f\circ \delta^{-1}_R)| > \alpha/A_R \}) \\
&\leq B_R^2 A_R R^{-Q} \kappa_{F,R} \| \Psi^{-1}(f \circ \delta_R^{-1}) \|_{L^1(\FE)} / \alpha \\
&\leq B_R^2 A^2_R R^{-Q} \kappa_{F,R} \| f \circ \delta_R^{-1} \|_{L^1(\Trv^k_d)} / \alpha \\
&= B_R^2 A^2_R \kappa_{F,R} \| f \|_{L^1(\Trv^k_d)} / \alpha 
\end{split}\]
and in particular, since $\lim_{R \downarrow 0} A_R B_R = 1$,
\[
\liminf_{R \downarrow 0} \mu_0(K \cap \{ |F(\tilde D_R)f| > \alpha \}) \leq \kappa_F \|f\|_{L^1(\Trv^k_d)}/\alpha.
\]

The proof of \eqref{eq:lptransplantation} is similar and follows the lines of the proof of \cite[Theorem 2]{kenig_divergence_1982}, keeping track of the constants.
\end{proof}

\begin{cor}\label{cor:transference_thr}
With the same notation and hypotheses as in Theorem \ref{thm:transplantation}, assume moreover that $\tilde D$ is nonnegative. Then $D_0$ is nonnegative and
\[
\thr(D_0) \leq \thr(\tilde D) \qquad\text{and}\qquad \thr_-(D_0) \leq \thr_-(\tilde D).
\]
\end{cor}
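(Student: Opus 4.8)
The plan is to derive Corollary \ref{cor:transference_thr} directly from Theorem \ref{thm:transplantation} by unwinding the definitions of $\thr$ and $\thr_-$. First I would observe that nonnegativity of $D_0$ is immediate: by \eqref{eq:lptransplantation} with $p=2$ applied to functions $F \in C_0(\R)$ supported in $(-\infty,0)$, the operator $F(D_0)$ has operator norm bounded by $\liminf_{r\downarrow 0}\|P_U F(r^\gamma \tilde D)P_U\|_{2\to 2}$, which vanishes since $\tilde D \geq 0$ forces $F(r^\gamma \tilde D)=0$; letting $F$ exhaust a neighbourhood of a putative negative point of the spectrum gives a contradiction, so $\sigma(D_0)\subseteq[0,\infty)$.

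Next, for the inequality $\thr(D_0)\leq\thr(\tilde D)$, I would fix $s > \thr(\tilde D)$ and a constant $C$ such that $\|F(\tilde D)\|_{2\to 2}+\|F(\tilde D)\|_{1\to 1,\infty}\leq C\|F\|_{L^2_{s,\sloc}}$ for all bounded Borel $F$. Given a bounded Borel $G:\R\to\C$ with $\|G\|_{L^2_{s,\sloc}}<\infty$, I want to bound $\|G(D_0)\|_{2\to 2}+\|G(D_0)\|_{1\to 1,\infty}$. The key point is that for $r>0$ the function $\lambda\mapsto G(r^\gamma\lambda)$ has the same local Sobolev norm as $G$ up to the rescaling built into \eqref{eq:mhcond}, namely $\|G(r^\gamma\,\cdot)\|_{L^2_{s,\sloc}}=\|G\|_{L^2_{s,\sloc}}$, since the definition \eqref{eq:mhcond} is already dilation-invariant. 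Hence $\|P_U G(r^\gamma\tilde D)P_U\|_{1\to 1,\infty}\leq\|G(r^\gamma\tilde D)\|_{1\to 1,\infty}\leq C\|G(r^\gamma\,\cdot)\|_{L^2_{s,\sloc}}=C\|G\|_{L^2_{s,\sloc}}$ uniformly in $r$, and similarly for the $L^2$ norm. Taking the $\liminf$ as $r\downarrow 0$ in \eqref{eq:wl1transplantation} and \eqref{eq:lptransplantation} (with $p=2$) yields $\|G(D_0)\|_{2\to 2}+\|G(D_0)\|_{1\to 1,\infty}\leq C'\|G\|_{L^2_{s,\sloc}}$. A minor technical wrinkle is that Theorem \ref{thm:transplantation} is stated for $F\in C_0(\R)$ whereas $\thr$ ranges over all bounded Borel $F$; I would handle this by a density/approximation argument (truncate and mollify $G$, or invoke the fact that the estimate extends from a dense subclass by a standard limiting argument using $L^2$-boundedness), observing that one may further assume $G(0)=0$ or factor out the spectral projection at $0$ as in the proof of Theorem \ref{thm:m_sphere}. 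Taking the infimum over admissible $s$ gives $\thr(D_0)\leq\thr(\tilde D)$.

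For $\thr_-(D_0)\leq\thr_-(\tilde D)$ the argument is identical but one exponent at a time: fix $p\in(1,\infty)$ and $s>\thr_-(\tilde D)$, use \eqref{eq:lptransplantation} with that $p$ together with the dilation-invariance $\|G(r^\gamma\,\cdot)\|_{L^\infty_{s,\sloc}}=\|G\|_{L^\infty_{s,\sloc}}$ to get $\|G(D_0)\|_{p\to p}\leq\liminf_{r\downarrow 0}\|P_U G(r^\gamma\tilde D)P_U\|_{p\to p}\leq\liminf_{r\downarrow 0}\|G(r^\gamma\tilde D)\|_{p\to p}\leq C_p\|G\|_{L^\infty_{s,\sloc}}$, and conclude.

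The only real obstacle is the bookkeeping around the hypothesis class: Theorem \ref{thm:transplantation} applies to $F\in C_0(\R)$, so one must check that the $L^2_{s,\sloc}$ (resp.\ $L^\infty_{s,\sloc}$) bound for $F(D_0)$ on this subclass, which is dense in a suitable sense, upgrades to all bounded Borel $F$ entering the definition of $\thr$ (resp.\ $\thr_-$). This is routine — one truncates the multiplier to a large interval $[-R^\gamma R', R^\gamma R']$, mollifies to land in $C_0^\infty$, controls the local Sobolev norms of the approximants, and passes to the limit using Fatou for the weak-$L^1$ quasinorm and strong $L^2$-convergence — but it is the one place where care is needed; everything else is a direct substitution into \eqref{eq:wl1transplantation}--\eqref{eq:lptransplantation} combined with the scale-invariance of the Mihlin--H\"ormander norms.
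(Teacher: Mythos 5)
Your proposal is correct and follows essentially the same route as the paper: fix $s>\thr(\tilde D)$, use the scale-invariance of the $L^2_{s,\sloc}$ (resp.\ $L^\infty_{s,\sloc}$) norm so that the bound for $F(r^\gamma\tilde D)$ is uniform in $r$, feed this into \eqref{eq:wl1transplantation}--\eqref{eq:lptransplantation}, and upgrade from $F\in C_0(\R)$ to all bounded Borel multipliers by approximation after factoring out the spectral projection at $0$. The one refinement in the paper's handling of the approximation step you flag as delicate is that no mollification is needed: since $\thr(\tilde D)\geq 1/2$ (because $\|F(\tilde D)\|_{L^2\to L^2}=\sup_{\lambda\in\Sigma}|F(\lambda)|$), any admissible $s$ exceeds $1/2$ and Sobolev embedding already makes $F$ continuous and bounded on $(0,\infty)$, so the dyadic truncations $F_N=\sum_{|k|\leq N}F\eta_k$ lie in $C_0(\R)$ with uniformly controlled $L^2_{s,\sloc}$ norms and one concludes from the strong convergence $F_N(D_0)\to F(D_0)$.
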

\begin{proof}
We may assume that $\tilde D \neq 0$, otherwise $D_0=0$ by \eqref{eq:stronglimit} and the result is trivial. Let $\Sigma$ be the $L^2$-spectrum of $\tilde D$. Then, from the definition of $\thr$ and the fact that $\|F(\tilde D)\|_{L^2 \to L^2} = \sup_{\lambda \in \Sigma} |F(\lambda)|$ for all continuous $F$, it follows easily that $\thr(\tilde D) \geq 1/2$ (see, e.g., \cite[\S 2.6.2, proof of Theorem 1]{triebel_spaces_1978} or \cite[Proposition 2.3.15]{martini_multipliers_2010}).

Take $s > \thr(\tilde D)$ and let $\kappa_s \in (0,\infty)$ be such that
\begin{equation}\label{eq:thrbound}
\|F(\tilde D) \|_{L^2 \to L^2} + \|F(\tilde D) \|_{L^1 \to L^{1,\infty}} \leq \kappa_s \, \|F\|_{L^2_{s,\sloc}}.
\end{equation}
for all bounded Borel functions $F : \R \to \C$. We show now that a similar inequality holds with $D_0$ in place of $\tilde D$.

Take any $F$ such that $\|F\|_{L^2_{s,\sloc}} < \infty$. Since $s>1/2$, by Sobolev embedding $F$ is continuous and bounded on $(0,\infty)$. 
Arguing as in the proof of Corollary \ref{cor:nonsharp}\ref{en:nonsharp_wl1}, it is not restrictive to assume that $F(0) = 0$. Let $\eta_k \in C_c^\infty((0,\infty))$, for $k \in \Z$, be as in the proof of Proposition \ref{prp:improvedmissingconv} and define $F_N = \sum_{|k|\leq N} F \eta_k$. Theorem \ref{thm:transplantation} and \eqref{eq:thrbound} then give that
\[
\| F_N(D_0) \|_{L^2 \to L^{2}} + \| F_N(D_0) \|_{L^1 \to L^{1,\infty}} \leq \kappa \,  \|F_N  \|_{L^2_{s,\sloc}} \leq C_s \, \kappa_s \, \|F\|_{L^2_{s,\sloc}}.
\]
for all $N \in \N$. On the other hand, $F_N(D_0) \to F(D_0)$ strongly as $N \to \infty$. Consequently
\[
\| F(D_0) \|_{L^2 \to L^{2}} + \| F(D_0) \|_{L^1 \to L^{1,\infty}} \leq C_s \, \kappa_s \, \|F \|_{L^2_{s,\sloc}}.
\]

This proves that $\thr(\tilde D) \geq \thr(D_0)$. The other inequality is proved analogously.
\end{proof}

We now apply these results to the Kohn Laplacian. To this purpose we exploit the analysis of \cite{beals_calculus_1988}.

\begin{prp}\label{prp:boxb_local}
Let $M$ be a CR manifold of hypersurface type and dimension $2n-1$, equipped with a compatible hermitian metric. Let $\boxb$ be the Kohn Laplacian acting on section of the bundle $\Lambda^{0,j} M$ of $(0,j)$-forms, where $j \in \{0,\dots,n-1\}$. Let $\mathcal{J} = \{ J \subseteq \{1,\dots,n-1\} \tc |J|=j\}$ and let $\Trv^\mathcal{J}$ be the trivial bundle on $\R^{2n-1}$ with fiber $\C^\mathcal{J}$. Let $y \in M$ and let $\lambda_1,\dots,\lambda_{n-1}$ be the eigenvalues of the Levi form of $M$ at $p$. Define vector fields on $\R^{2n-1}$ by
\[
U_0 = \frac{\partial}{\partial u_0}, \quad U_k = \frac{\partial}{\partial u_k} + 2 \lambda_k \, u_{n-1+k} \frac{\partial}{\partial u_0},  \quad U_{n-1+k} = \frac{\partial}{\partial u_{n-1+k}} - 2 \lambda_k \, u_{k} \frac{\partial}{\partial u_0}
\]
for $k =1,\dots,n-1$. Let $\boxb^y : C^\infty(\Trv^{\mathcal J}) \to C^\infty(\Trv^{\mathcal J})$ be the differential operator defined by
\begin{equation}\label{eq:boxbmodel}
\boxb^y (f_J)_{J \in \mathcal{J}} = (\boxb^{y,J} f_J)_{J \in \mathcal{J}},
\end{equation}
where
\begin{equation}\label{eq:boxbmodelscalar}
-4 \, \boxb^{y,J} = \sum_{k=1}^{2n-2} U_k^2 +4 i \left(\sum_{k \in J} \lambda_k - \sum_{k \notin J} \lambda_k\right) U_0.
\end{equation}
Then $\boxb^y$ is a local model for $\boxb$ at $p$ of order $2$ with respect to the dilations
\[
\delta_R(u_0,u_1,\dots,u_{2n-2}) = (R^2 u_0,R u_1,\dots,R u_{2n-2})
\]
on $\R^{2n-1}$.
\end{prp}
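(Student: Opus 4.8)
The plan is to verify Definition \ref{dfn:localmodel} directly, by exhibiting the required coordinate chart and orthonormal frame. First I would take the adapted (Heisenberg normal) coordinates $u = (u_0, u_1, \dots, u_{2n-2})$ centered at $y$ and the associated local frame $Z_1, \dots, Z_{n-1}$ for $T^{0,1}M$ together with a real transversal vector field $T$ furnished by \cite{beals_calculus_1988} (see also \cite{folland_estimates_1974,stanton_heat_1984}), normalized so that: (i) the $Z_k$ are orthonormal at $y$ for the compatible hermitian metric; and (ii) under the parabolic dilations $\delta_R$ one has $Z_k = \tfrac12(U_k + i U_{n-1+k}) + (\text{terms of strictly higher parabolic order})$ and $T = U_0 + (\text{higher parabolic order})$. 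As frame for $\Lambda^{0,j}M|_U$ I take the Gram--Schmidt orthonormalization of $\{\theta^J\}_{J \in \mathcal{J}}$, where $\{\theta^k\}$ is the $(0,1)$-coframe dual to $\{Z_k\}$; by (i) this coframe is already orthonormal at $y$, so the orthonormalizing matrix is the identity at $y$ and differs from it by $O(|u|)$, i.e.\ by $O(R)$ after dilation. The factor $a^{1/2}$ built into $\Psi$ in \eqref{eq:coordinate_map} is smooth and positive, so $a \circ \delta_R^{-1} \to a(0)$ in $C^\infty_{\mathrm{loc}}$ and contributes nothing to the limit; this will also show that $\boxb^y$ depends neither on $a$ nor on the metric, in accordance with the remarks following Definition \ref{dfn:localmodel}.

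Next I would expand $\boxb = \bar\partial_b \bar\partial_b^* + \bar\partial_b^* \bar\partial_b$ in this chart and frame according to the parabolic grading associated to $\delta_R$, in which a single $u_0$-derivative counts as two horizontal derivatives. In the frame $\{\theta^J\}$ the operator $\bar\partial_b$ acts, modulo terms of positive parabolic weight, by sending $(f_J)_J$ to the tuple whose $J$-entry is $\sum_{k \notin J} \pm\, Z_k f_{J \cup \{k\}}$ (with the usual signs), and conjugation by $\Psi$ followed by dilation replaces each $Z_k$ by its parabolic principal part $Z_k^{(0)} = \tfrac12(U_k + i U_{n-1+k})$, up to $O(R)$. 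Since $\Psi$ is an $L^2$-isometry onto its range, $\Psi\bar\partial_b^*\Psi^{-1}$ is the Hilbert-space adjoint of $\Psi\bar\partial_b\Psi^{-1}$, so in the limit $\bar\partial_b^*$ is replaced by the formal adjoint, on $\R^{2n-1}$ with Lebesgue measure, of the principal part of $\bar\partial_b$; thus the parabolic principal part of $\boxb$ at $y$ is the Kohn Laplacian assembled from these model operators. Composing them, the off-diagonal entries of the resulting matrix operator carry positive parabolic weight and drop out in the limit, while the $J$-diagonal entry produces the second-order term $-\tfrac12 \sum_{k} \big(Z^{(0)}_k \overline{Z^{(0)}_k} + \overline{Z^{(0)}_k} Z^{(0)}_k\big) = -\tfrac14 \sum_{\ell=1}^{2n-2} U_\ell^2$ together with a first-order remainder governed by the commutators $[Z^{(0)}_k, \overline{Z^{(0)}_k}] = \pm 2 i \lambda_k U_0$; the standard Folland--Stein bookkeeping over $k \in J$ versus $k \notin J$ turns this remainder into $-i\big(\sum_{k \in J}\lambda_k - \sum_{k \notin J}\lambda_k\big) U_0$. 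Multiplying by $-4$ reproduces \eqref{eq:boxbmodelscalar}, hence \eqref{eq:boxbmodel}.

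Finally, every coefficient of $D_R$ is a smooth function of $\delta_R^{-1}(u)$ multiplied by a polynomial in $R$ whose constant term is the corresponding coefficient of $\boxb^y$; consequently these coefficients converge in $C^\infty_{\mathrm{loc}}(\R^{2n-1})$, so $D_R f \to \boxb^y f$ uniformly on compact sets, and a fortiori in $L^2$, for every $f \in C^\infty_c(\Trv^{\mathcal J})$. This is exactly the convergence \eqref{eq:stronglimit} required by Definition \ref{dfn:localmodel}, with $\gamma = 2$ and the stated dilations, so $\boxb^y$ is a local model for $\boxb$ at $y$.

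The main obstacle is not the limiting argument, which is soft once the adapted frame of \cite{beals_calculus_1988} is in place, but the exact computation of the parabolic principal part: one must fix conventions for the CR structure, the transversal form, the hermitian metric on $\Lambda^{0,j}M$ and the normalization of the Levi form so that the first-order part of $\bar\partial_b^*\bar\partial_b + \bar\partial_b\bar\partial_b^*$ on the $J$-component emerges with precisely the coefficient $4 i\big(\sum_{k \in J}\lambda_k - \sum_{k \notin J}\lambda_k\big)$ in front of $U_0$ — that is, one must faithfully recover the Folland--Stein/Beals--Greiner model operator for $\boxb$ on $(0,j)$-forms at a point.
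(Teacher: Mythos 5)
Your proposal follows essentially the same route as the paper: pass to the Beals--Greiner adapted coordinates and frame at $y$, observe that conjugation by $\Psi$ and parabolic rescaling sends each $Z_k$ (and each smooth coefficient) to its parabolic principal part in the $C^\infty_c$ topology, hence in $L^2$, and identify the limit with $\boxb^y$. The one substantive difference is that you propose to recompute the parabolic principal part of $\bar\partial_b\bar\partial_b^*+\bar\partial_b^*\bar\partial_b$ by hand --- which you correctly flag as the delicate step, since all the conventions must line up to produce the coefficient $4i\bigl(\sum_{k\in J}\lambda_k-\sum_{k\notin J}\lambda_k\bigr)$ --- whereas the paper sidesteps this entirely by citing the explicit formulas of Beals--Greiner (their eqs.\ (20.43), (21.1), (21.5), (21.10) and (22.1)), adding only the observation that the coordinate changes diagonalizing the model operator commute with the dilations $\delta_R$; if you carry out your computation you must in effect reproduce that diagonalization, which your choice of frame (Levi form diagonal at $y$ with eigenvalues $\lambda_k$) implicitly builds in.
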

\begin{proof}
This follows from the results of \cite[Chapter 4]{beals_calculus_1988}. Indeed, if the complex vector fields $Z_k$ and $Z_k^y$ are defined as in \cite[eqs.\ (21.1) and (21.5)]{beals_calculus_1988}, and $a$ is any smooth function on the domain of the $Z_k$, then it is easily seen that, for all $f \in C^\infty_c(\R^{2n-1})$,
\begin{align*}
R (Z_k ( f \circ \delta_R^{-1})) \circ \delta_R &\to Z_k^y f  \\
(a (f \circ \delta_R^{-1})) \circ \delta_R &\to a(0) f 
\end{align*}
as $R \downarrow 0$, in the LF-space topology of $C^\infty_c(\R^{2n-1})$. Consequently, by composition, for all nowhere zero smooth functions $a$ and for all $f \in C^\infty_c(\R^{2n-1})$,
\[
R^2 (a^{-1} Z_k \overline{Z}_l ((af) \circ \delta_R^{-1}) \circ \delta_R \to Z_k^y \overline{Z}_l^y f
\]
as $R \downarrow 0$, in the topology of $C^\infty_c(\R^{2n-1})$, and also in $L^2$. From this it is not difficult to see that the operator $\boxb^y$ given by \cite[eq.\ (21.10)]{beals_calculus_1988} is a local model, in the sense of our Definition \ref{dfn:localmodel}, of the operator $\boxb$ given by \cite[eq.\ (20.43)]{beals_calculus_1988}. In order to conclude, it is sufficient to observe that \cite[eq.\ (22.1)]{beals_calculus_1988}, which corresponds to \eqref{eq:boxbmodel} and \eqref{eq:boxbmodelscalar} above, is obtained from \cite[eq.\ (21.10)]{beals_calculus_1988} by means of the coordinate changes \cite[eqs.\ (21.14) and (21.21)]{beals_calculus_1988}, which commute with the dilations $\delta_R$.
\end{proof}

Note that the vector fields $U_0,U_1,\dots,U_{2n-2}$ of Proposition \ref{prp:boxb_local} are left-invariant with respect to the nilpotent Lie group law on $\R^{2n-1}$ defined by
\begin{multline}\label{eq:grouplaw}
(u_0,u_1,\dots,u_{2n-2}) \cdot (u'_0,u'_1,\dots,u'_{2n-2}) \\
= \Biggl(u_0+u_0'+ 2\sum_{k=1}^{n-1} \lambda_k (u_k' u_{n-1+k} - u_k u_{n-1+k}'),u_1+u_1',\dots,u_{2n-2}+u_{2n-2}'\Biggr)
\end{multline}
(cf.\ \cite[Definition (1.14)]{beals_calculus_1988}). Hence the operators $\boxb^{y,J}$ and $\boxb^y$ are essentially self-adjoint (see, e.g., \cite{nelson_representation_1959} or \cite[Proposition 3.2]{martini_spectral_2011}).

It is now immediate to transplant Theorems \ref{thm:ccms} and \ref{thm:m_sphere} to the Heisenberg group.

\begin{proof}[Proof of Corollary \ref{cor:m_heisenberg}]
The sphere $\Sphere$ is a strictly pseudoconvex CR manifold with a Levi metric \cite[Definition 1.5]{stanton_heat_1984}, so the eigenvalues of the Levi form at each point are all equal to $1$. Hence, if we apply Proposition \ref{prp:boxb_local} to the Kohn Laplacian $\boxb^\Sphere$ on the sphere, we obtain as local model (at any point) an operator that corresponds, in suitable coordinates and trivializations, to the Kohn Laplacian $\boxb^H$ on the Heisenberg group with the standard strictly pseudoconvex structure (see \cite[\S\S 4-5]{folland_estimates_1974}). Consequently $\thr(\boxb^H) \leq \thr(\boxb^\Sphere)$ by Corollary \ref{cor:transference_thr}.
\end{proof}

As for Theorem \ref{thm:sharp}, the following variation of a result of \cite{martini_necessary} will be of use.

\begin{prp}\label{prp:sublap_lowerbound}
Let $G$ be a $2$-step stratified group of topological dimension $d$, $\opL$ a homogeneous sub-Laplacian thereon and $V$ a left-invariant vector field in the second layer of $G$, such that $\opL \geq iV$. Then
\[
\thr_-(\opL-iV) \geq d/2.
\]
\end{prp}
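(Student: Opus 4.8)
The plan is to follow the method of \cite{martini_necessary}, which reduces a lower bound on the threshold $\thr_-$ of a nonnegative operator to a lower bound on the $L^p\to L^p$ norms of a well-chosen one-parameter family of spectral multipliers, namely the imaginary powers localized away from the origin. Fix a nonnegative $\chi\in C^\infty_c((1,2))$, not identically zero, and for $u\in\R$ put $F_u(\lambda)=\chi(\lambda)\,\lambda^{iu}$. Since each differentiation of $\lambda^{iu}$ produces a factor comparable to $|u|$, one has $\|F_u\|_{L^\infty_{s,\sloc}}\lesssim_s(1+|u|)^s$ for every $s\geq0$, and $F_u|_{(-\infty,0]}$ is irrelevant because $\chi$ vanishes near $0$ (here the hypothesis $\opL-iV\geq0$, i.e.\ $\opL\geq iV$, is what makes the functional calculus of $\opL-iV$ the relevant object). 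Hence it suffices to show that for each $p\in(1,\infty)$ there is a sequence $u_k\to\infty$ with
\[
\|F_{u_k}(\opL-iV)\|_{L^p\to L^p}\gtrsim_p(1+|u_k|)^{d|1/2-1/p|};
\]
then any estimate $\|G(\opL-iV)\|_{L^p\to L^p}\leq C_p\|G\|_{L^\infty_{s,\sloc}}$ forces $s\geq d|1/2-1/p|$, and letting $p\downarrow1$ gives $\thr_-(\opL-iV)\geq d/2$.

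The substance is therefore the lower bound on $\|F_u(\opL-iV)\|_{L^p\to L^p}$. First I would realize $G$ in exponential coordinates as $\R^{d_1}\times\R^{d_2}$, with $\mathfrak g_1$ the first layer, $\mathfrak g_2$ the centre, $d=d_1+d_2$, and Haar measure equal to Lebesgue measure, so that $V$ is a left-invariant vector field in the centre and $F_u(\opL-iV)$ acts as convolution by a kernel $\kappa_u$. Taking the partial Fourier transform in the central variables decomposes $\opL-iV$ as a direct integral over $\xi\in\mathfrak g_2^\ast\cong\R^{d_2}$ of shifted twisted Laplacians $\opL_\xi+\langle\xi,V\rangle$ on (a space $L^2$ of dimension $n_\xi$ built from) $\R^{d_1}$, where $\opL_\xi$ has pure point spectrum $\{\sum_i(2k_i+1)\varpi_i(\xi):k\in\N^{n_\xi}\}$ with $\varpi_i(\xi)$ the moduli of the nonzero eigenvalues of the skew form $\xi([\cdot,\cdot])$ on $\mathfrak g_1$; the condition $\opL\geq iV$ is precisely that $\opL_\xi+\langle\xi,V\rangle\geq0$ for a.e.\ $\xi$. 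Via the group Plancherel formula, $\kappa_u$ is then an integral over $\mathfrak g_2^\ast$ of the partial traces of $(\opL_\xi+\langle\xi,V\rangle)^{iu}\chi(\opL_\xi+\langle\xi,V\rangle)$ against the oscillator eigenprojections, Fourier-dual to the central variables. Inserting the known asymptotics of imaginary powers of the Hermite operator and evaluating the $\xi$-integral by stationary phase, I expect that at the natural scale ($\sim|u|^{-1/2}$ in the horizontal directions, $\sim|u|^{-1}$ in the central directions) the kernel $\kappa_u$ is comparable to the Euclidean kernel of $(-\Delta_{\R^d})^{iu}\chi(-\Delta_{\R^d})$: the $d_1$ oscillator directions and the $d_2$ central directions together contribute the topological dimension $d=d_1+d_2$, not the homogeneous dimension $Q=d_1+2d_2$. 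From such a near-diagonal comparison the desired $L^p$ lower bound follows by testing $F_u(\opL-iV)$ against a suitably normalized and modulated bump concentrated at that scale, exactly as in the classical computation $\|(-\Delta_{\R^d})^{iu}\|_{L^p\to L^p}\sim(1+|u|)^{d|1/2-1/p|}$; this testing step is where the argument of \cite{martini_necessary} is used.

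The hard part will be the kernel estimate: controlling the stationary-phase contribution of the $\xi$-integral uniformly in $u$ and ruling out cancellations that would lower the exponent, while simultaneously handling the degenerate loci in $\mathfrak g_2^\ast$ where the rank of $\xi([\cdot,\cdot])$ drops and where $\opL_\xi+\langle\xi,V\rangle$ can have $0$ in its spectrum (the extremal case, relevant e.g.\ to the Kohn Laplacian, where $\opL\geq iV$ holds with equality on part of the spectrum). Once Proposition \ref{prp:sublap_lowerbound} is in hand, Theorem \ref{thm:sharp} follows by combining it with Proposition \ref{prp:boxb_local} — which produces a $2$-step group local model of $\boxb$ of the form $\bigoplus_J\tfrac14(\opL-iV_J)$, with $\opL\geq iV_J$ for the relevant indices $J$ by nonnegativity of $\boxb$ — and Corollary \ref{cor:transference_thr}, since the threshold of a direct sum dominates that of each summand.
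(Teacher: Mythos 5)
Your reduction is the same as the paper's: it suffices to produce, for each $p$, multipliers $F_u$ with $\|F_u\|_{L^\infty_{s,\sloc}}\lesssim(1+|u|)^s$ and $\|F_u(\opL-iV)\|_{L^p\to L^p}\gtrsim(1+|u|)^{d(1/p-1/2)}$, and the paper gets the latter by adapting the stationary-phase analysis of \cite{martini_necessary} (with the oscillating multipliers $m_t^\chi$ of that paper in place of your imaginary powers, an inessential difference). The gap is that the entire analytic content of the proposition --- the kernel lower bound --- is left as an expectation, and the one quantitative claim you do make about it looks at the wrong region. For a multiplier supported where $\lambda\sim 1$ and oscillating like $\lambda^{iu}$, the convolution kernel is bounded uniformly in $u$ near the identity (by Cauchy--Schwarz and the spectral localization); the growth of the $L^p$ norm comes entirely from the stationary-phase contribution at distance $\sim|u|$ from the identity in the first layer and $\sim|u|^2$ in the second, where the kernel has size $\sim|u|^{-(Q-d/2)}$ on a set of measure $\sim|u|^{Q}$, so that its $L^1$ norm is $\gtrsim|u|^{d/2}$. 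This is exactly where the paper evaluates its kernel, namely $\Omega^{\chi,\theta}_{\beta,t}(2ty,t^2v)$ with $(y,v)$ in a fixed compact set, and it is how the topological dimension $d$ rather than the homogeneous dimension $Q$ emerges. A near-diagonal comparison at scale $|u|^{-1/2}$ horizontally and $|u|^{-1}$ centrally, followed by testing against a bump at that scale, does not see the part of the kernel responsible for the growth in $u$ and cannot yield the claimed lower bound.

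Moreover, the genuinely new point of this proposition relative to the case $V=0$ treated in \cite{martini_necessary} is precisely how the drift enters the stationary-phase computation: writing $-iV=\beta\cdot\vecU$, the oscillatory integral over $\lie{g}_2^*$ acquires an extra factor $\exp(i(1-sr)\langle\mu,\beta\rangle)$, and the limiting phase becomes $\Psi_\beta(y,v)=\Psi(y,v)+\beta\cdot\mu^c(y,v)$; one must check that this shift does not destroy the nondegeneracy of the critical points, so that the asymptotic expansion and the resulting bound $\|m_t^\chi(\opL+\beta\cdot\vecU)\|_{p\to p}\geq C\,t^{d(1/p-1/2)}$ survive. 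You list this (together with the degeneracies of the form $\xi([\cdot,\cdot])$ and the possible vanishing of the bottom of the spectrum) among the difficulties, but you do not address it, so the proposal does not yet establish the proposition. Your framing of the reduction and your closing remarks on deducing Theorem \ref{thm:sharp} via Proposition \ref{prp:boxb_local} and Corollary \ref{cor:transference_thr} do agree with the paper.
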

\begin{proof}
The proof follows the lines of the argument in \cite[Section 2]{martini_necessary}, where the case $V	=0$ is treated. Here we just list the main steps and modifications needed.
\begin{itemize}
\item As in \cite[Section 2]{martini_necessary}, let $Q$ be the homogeneous dimension of $G$, $\lie{g}_2 \cong \R^{d_2}$ be the second layer of the Lie algebra of $G$ and $\vecU$ be the corresponding vector of central derivatives. Then $-i V = \beta \cdot \vecU$ for some $\beta \in \lie{g}_2$. Similarly as in \cite[eq.\ (9)]{martini_necessary}, define $\Omega_{\beta,t}^{\chi,\theta}$ as the convolution kernel of $m_t^\alpha(\opL+\beta\cdot\vecU) \, \theta(t \vecU)$.
\item By arguing as in \cite[Proposition 5]{martini_necessary}, one obtains an expression for $\Omega_{\beta,t}^{\chi,\theta}$ analogous to the right-hand side of \cite[eq.\ (10)]{martini_necessary}, where $I^\theta$ is replaced by
\begin{multline*}
I^\theta_\beta(t,s,r,y,v) = \int_{\lie{g}_2^*} \exp(i t \Phi(y,v,\mu)-is\Sigma(y,v)-irR(s,r,y,\mu)) \\ \times \exp(i(1-sr) \langle \mu, \beta \rangle) \, B(sr,\mu) \,\theta(\mu) \,d\mu.
\end{multline*}
\item Hence the argument of \cite[Proposition 7]{martini_necessary} gives that
\[
t^{Q-d/2} \, \Omega_{\beta,t}^{\chi,\theta}(2ty,t^2 v) = e^{i\pi d_1/4} e^{it\Psi_\beta(y,v)} A^{\chi,\theta}(y,v) + O(t^{-1}),
\]
where $\Psi_\beta(y,v) = \Psi(y,v) + \beta \cdot \mu^c(y,v)$.
\item One can then repeat the proof of \cite[Theorem 8]{martini_necessary} to obtain that
\[
\| m_t^\chi(\opL+\beta \cdot \vecU) \|_{p\to p} \geq C_{p,\chi,\beta} \, t^{d(1/p-1/2)}, \qquad p \in [1,2], \, t \geq 1.
\]
\item Comparison of this estimate with \cite[eq.\ (8)]{martini_necessary} gives the conclusion.\qedhere
\end{itemize}
\end{proof}

\begin{proof}[Proof of Theorem \ref{thm:sharp}]
As in Proposition \ref{prp:boxb_local}, let $\lambda_1,\dots,\lambda_{n-1}$ be the eigenvalues of the Levi form at a point $y \in M$. 
Since $M$ is not Levi-flat, we can choose $y \in M$ so that not all $\lambda_j$ are zero. Hence the group law \eqref{eq:grouplaw} defines a $2$-step stratified structure on $\R^{2n-1}$, for which $\opL = -\sum_{k=1}^{2n-2} U_k^2$ is a homogeneous sublaplacian and $U_0$ is a vector field in the second layer. Consequently, by Corollary \ref{cor:transference_thr} and Proposition \ref{prp:sublap_lowerbound}, 
\[
\thr_-(\boxb) \geq \thr_-(\boxb^y) = \max_{J \in \mathcal{J}} \thr_-(\boxb^{y,J}) \geq (2n-1)/2
\]
and we are done.
\end{proof}

\section{An abstract multivariable multiplier theorem}
\label{section:joint}

In this section we prove a multiplier theorem for commuting operators in the setting of a doubling metric-measure space $X$. More precisely, we will consider a system of strongly commuting, possibly unbounded self-adjoint operators $U_1,\dots,U_n$ on $L^2(X)$. Such operators have a joint spectral resolution $E$ on $\R^n$, so
\[
U_j = \int_{\R^n} \lambda_j \,dE(\lambda)
\]
for $j=1,\dots,n$, and a joint functional calculus is defined by
\[
F(U_1,\dots,U_n) = \int_{\R^n} F(\lambda) \,dE(\lambda)
\]
for all Borel functions $F : \R^n \to \C$. In the following we aim at giving a sufficient condition for the weak type $(1,1)$ and $L^p$-boundedness of an operator $F(U_1,\dots,U_n)$ in terms of a smoothness condition of Mihlin--H\"ormander type on the  multiplier $F$:
\begin{equation}\label{eq:mhmulti}
\sup_{r \geq 0} \|(F \circ \epsilon_r) \, \chi \|_{L^\infty_s(\R^n)} < \infty.
\end{equation}
for some $s\geq 0$ sufficiently large. Here $(\epsilon_r)_{r>0}$ is a system of dilations on $\R^n$, $\epsilon_0 : \R^n \to \R^n$ is the function constantly $0$, and $\chi \in C^\infty_c(\R^n \setminus \{0\})$ is a cutoff supported on an annulus.

A model result for us is the classical Mihlin--H\"ormander theorem for Fourier multipliers on $\R^n$, thought of as a multiplier theorem for the joint functional calculus of the partial derivatives $U_j = -i\partial_j$ on $\R^n$. In this case isotropic dilations $\epsilon_r(\lambda) = r\lambda$ are considered and a condition \eqref{eq:mhmulti} of order $s>n/2$ is required.

In the general setting described above, clearly some assumptions on the operators $U_1,\dots,U_n$ are necessary, other than self-adjointness and strong commutativity, in order to prove $L^p$-bounds for $p \neq 2$. This problem has been studied extensively in the case $n=1$, especially for a single nonnegative self-adjoint operator $\opL = U_1$; in this case the choice of dilations $\epsilon_r$ is irrelevant and \eqref{eq:mhmulti} reduces to \eqref{eq:mhcond}. Several assumptions on $\opL$ have been considered in the literature (see, e.g., \cite{hebisch_functional_1995,cowling_spectral_2001,duong_plancherel-type_2002,blunck_hoermander_2003,duong_weighted_2011,kriegler_hoermander_2014,kunstmann_spectral_2015} and references therein), which usually involve estimates for the heat propagator $e^{-t\opL}$ or the wave propagator $\cos(t\sqrt{\opL})$ associated with $\opL$. When bounds on the heat propagator (such as Gaussian-type bounds) are assumed, a classical proof strategy (which appears to originate in the study of group-invariant differential operators on Lie groups; see, e.g., \cite{folland_hardy_1982,hulanicki_functional_1984,de_michele_mulipliers_1987,mauceri_vectorvalued_1990,christ_multipliers_1991,alexopoulos_spectral_1994}) consists in a ``change of variable'', i.e., writing $F(\opL) = G(e^{-t\opL})$ for $G(s) = F(-t^{-1}\log s)$, and then using the particularly favourable bounds on the heat propagator $e^{-t\opL}$ to obtain $L^p$-boundedness of $G(e^{-t\opL})$ whenever $G$ is sufficiently smooth and supported away from $0$; since the ``change of variables'' is given by a smooth function, smoothness properties of $G$ can be reduced to smoothness properties of $F$.

For a system $U_1,\dots,U_n$ of several commuting operators, there seems not to be an obvious standard generalization of heat propagator bounds. In \cite{sikora_multivariable_2009} the case of direct products is considered, where $X = X_1 \times \dots \times X_n$ and each $U_j$ operates on a different factor $X_j$ of the product; there a multiplier theorem for the system $U_1,\dots,U_n$ is proved by assuming bounds for each heat propagator $e^{-tU_j}$ on $X_j$. However there are many systems of commuting operators that are not in ``direct product form''. Numerous examples come from the setting of Lie groups and homogeneous spaces, such as the systems of commuting differential operators associated to Gelfand pairs \cite{helgason_groups_1984,gangolli_harmonic_1988,wolf_harmonic_2007,fischer_nilpotent_2012}. In these cases, one can usually find an operator $\opL = P(U_1,\dots,U_n)$ for some polynomial $P$ such that good bounds hold for the heat propagator $e^{-t\opL}$, as well as for the ``$U_j$-derivatives'' $U_j e^{-t\opL}$ of the propagator. In \cite{martini_spectral_2011,martini_joint_2012} such systems of commuting group-invariant differential operators on Lie groups are studied and, in the case of homogeneous operators on a homogeneous Lie group, a multiplier theorem of Mihlin--H\"ormander type is proved. The basic idea in the proof is again to use a change of variables such as $F(U_1,\dots,U_n) = G(e^{-t\opL},U_1 e^{-t\opL},\dots,U_n e^{-t\opL})$ in order to reduce the functional calculus of the original (unbounded) operators to the functional calculus of operators satisfying good bounds.

What follows can be considered as an extension of the multiplier theorem of \cite[\S 4]{martini_joint_2012} to the setting of abstract operators on a doubling metric-measure space. To this purpose, here we take as an assumption the existence of a ``change of variables'' with suitable smoothness and invertibility properties and such that the operators resulting from this change satisfy suitably good bounds. In order for our result to encompass the various different cases mentioned above, we do not prescribe a specific form for this ``change of variables'', and we state fairly minimal hypotheses that are enough for the argument to work.

So far we have been vague on the ``good bounds'' to be required on the operators resulting from the ``change of variables''. In the case $n=1$, a typical assumption is given by  Gaussian-type heat kernel bounds, i.e., (super)exponential decay in space of the integral kernel of the heat propagator. However in \cite{hebisch_functional_1995} it is shown that polynomial decay (of arbitrarily large order) is sufficient. Following \cite{hebisch_functional_1995}, an analogous polynomial decay assumption is stated in our result below. This assumption is sufficient to prove the weak type $(1,1)$ of $F(U_1,\dots,U_n)$ under a condition of the form \eqref{eq:mhmulti} of order $s>Q/2$, where $Q$ is the ``homogeneous dimension'' of $X$ (see definitions below).

Note that in the smoothness condition \eqref{eq:mhmulti} an $L^\infty$ Sobolev norm is used. It would be interesting to investigate whether sharper results involving other Sobolev norms are possible. In the case $n=1$ (single operator), this problem is extensively discussed in \cite{duong_plancherel-type_2002}, where further assumptions on the operator, such as ``Plancherel-type estimates'', are used to replace $L^\infty_s$ with $L^q_s$ for some $s$. A similar approach is used in \cite{martini_joint_2012} in the case of systems of group-invariant operators on Lie groups. For the sake of simplicity, we will not pursue this here.

\bigskip

Let $(X,\dist,\mu)$ be a doubling metric-measure space of homogeneous dimension $Q$ and displacement parameter $N$. In other words, $(X,\dist)$ is a metric space and $\mu$ is a positive regular Borel measure on $X$, such that all balls $B(x,r) = \{ y \in X \tc \dist(x,y) < r\}$ have finite measure $V(x,r) = \mu(B(x,r))$, and moreover there exist $C',C'' > 0$ such that, for all $x,y \in X$, $\lambda,r \in [0,\infty)$,
\begin{align}
\label{eq:doubling}
V(x,\lambda r) &\leq C' (1+\lambda)^Q \, V(x,r), \\
\label{eq:displacement}
V(y, r) &\leq C'' (1+\dist(x,y)/r)^N \, V(x,r).
\end{align}
The constants $Q,N,C',C''$ contained in the inequalities \eqref{eq:doubling}, \eqref{eq:displacement} will be referred to as ``structure constants'' of the doubling space $(X,\dist,\mu)$.

A basic consequence of the doubling property \eqref{eq:doubling} is the following integrability property: for all $s>Q$ and $y \in X$,
\begin{equation}\label{eq:intstandardweight}
\int_X (1+\dist(x,y)/r)^{-s} \,d\mu(x) \leq C_s \, V(y,r).
\end{equation}

In the following we will consider bounded linear operators $T$ between Lebesgue spaces on $X$ that have an integral kernel, i.e., a locally integrable function $K_T : X \times X \to \C$ such that
\begin{equation}\label{eq:integralkernel}
Tf(x) = \int_X K_T(x,y) \, f(y) \,d\mu(y)
\end{equation}
for all $f \in C_c(X)$ and $\mu$-almost all $x \in X$.
Not all the operators that we are interested in have an integral kernel, so we also consider a weaker notion:
in case the function $K_T$ is just locally integrable on $\{ (x,y) \in X \times X \tc x \neq y\}$ and \eqref{eq:integralkernel} holds for $\mu$-a.e.\ $x \notin \supp f$, then we say that $K_T$ is the off-diagonal kernel of $T$.

In dealing with integral kernels, we will repeatedly use certain mixed weighted Lebesgue norms on functions $K : X \times X \to \C$, defined as follows: for all $p \in [1,\infty]$, $s \in [0,\infty)$, $r \in (0,\infty)$,
\[
\vvvert K\vvvert_{p,s,r} = \esssup_y  V(y,r)^{1/p'} \|K(\cdot,y) \, (1+\dist(\cdot,y)/r)^s \|_{L^p(X)}
\]
where $p'=p/(p-1)$ is the conjugate exponent. We will also write $\vvvert K \vvvert_{p,s}$ in place of $\vvvert K \vvvert_{p,s,1}$. It is worth noting that, if the operator $T$ has integral kernel $K_T$, then
\begin{equation}\label{eq:l1norm}
\|T\|_{1 \to 1} = \vvvert K_T \vvvert_{1,0,r}
\end{equation}
for all $r \in (0,\infty)$.

\begin{thm}\label{thm:multipliers}
Let $U_1,\dots,U_n$ be strongly commuting, possibly unbounded self-adjoint operators on $L^2(X)$, with joint spectrum $\Sigma \subseteq \R^n$. Let $\gamma_1,\dots,\gamma_n \in (0,\infty)$ and define dilations $\epsilon_r$ on $\R^n$ by $\epsilon_r(\lambda_1,\dots,\lambda_n) = (r^{\gamma_1} \lambda_1,\dots,r^{\gamma_n} \lambda_n)$ for all $r \in (0,\infty)$. Let $\Sigma_* = \overline{\bigcup_{r>0} \epsilon_r(\Sigma)}$. Assume that there exists a continuous map $\Psi = (\Psi_1,\dots,\Psi_m) : \R^n \to \R^m$ such that:
\begin{enumerate}[label=\textup{(\Alph*)},ref=\Alph*]
\item\label{en:multipliers_ass_inv} $\Psi(0) \neq 0$ and there exist an open neighborhood $\Omega$ of $\Psi(0)$ in $\R^m$ and a smooth map $\Phi : \Omega \to \R^n$ such that $\Phi \circ \Psi$ is the identity on $\Psi^{-1}(\Omega) \cap \Sigma_*$;
\item\label{en:multipliers_ass_ker} for all $j=1,\dots,m$ and $r>0$, the operator $\Psi_j \circ \epsilon_r(U_1,\dots,U_n)$ has an integral kernel and
\begin{equation}\label{eq:gaussiantype}
\sup_{r>0} \vvvert K_{\Psi_j \circ \epsilon_r(U_1,\dots,U_n)} \vvvert_{2,a,r} <\infty
\end{equation}
for all $a \in [0,\infty)$ and $j=1,\dots,m$.
\end{enumerate}
Then the following holds.
\begin{enumerate}[label=(\roman*)]
\item\label{en:multipliers_plancherel} For all bounded Borel functions $F : \R^{n} \to \C$ supported in $[-1,1]^{n}$, the operator $F \circ \epsilon_r(U_1, \dots, U_n)$ has an integral kernel for all $r \in (0,\infty)$ and
\[
\sup_{r>0} \vvvert K_{F \circ \epsilon_r(U_1, \dots, U_n)} \vvvert_{2,0,r} \leq C \|F\|_{\infty}.
\]
\item\label{en:multipliers_l2} For all $p \in [1,\infty]$, $b,s \in [0,\infty)$ with $s > b+Q(1/p-1/2)_+ + N(1/2-1/p)_+$ and all bounded Borel functions $F : \R^{n} \to \C$ supported in $[-1,1]^{n}$,
\[
\sup_{r>0} \vvvert K_{F \circ \epsilon_r(U_1, \dots, U_n)} \vvvert_{p,b,r} \leq C_{p,s,b} \|F\|_{L^\infty_s}.
\]
\item\label{en:multipliers_schwartz} For all $F : \R^n \to \C$ in the Schwartz class, for all $a\in [0,\infty)$ and $p \in [1,\infty]$,
\[
\sup_{r>0} \vvvert K_{F \circ \epsilon_r(U_1,\dots,U_n)} \vvvert_{p,a,r} < \infty.
\]
\item\label{en:multipliers_l1} For all $s > Q/2$ and all bounded Borel functions $F : \R^{n} \to \C$ supported in $[-1,1]^{n}$,
\[
\sup_{r>0} \| F \circ \epsilon_r(U_1, \dots, U_n) \|_{1 \to 1} \leq C_s \|F\|_{L^\infty_s}.
\]
\item\label{en:multipliers_proj} Let $E$ be the joint spectral resolution of $U_1,\dots,U_n$. Then $E(\{0\})$ is bounded on $L^p(X)$ for all $p \in [1,\infty]$. Moreover $E(\{0\})=0$ if $\mu(X) = \infty$. 
\item\label{en:multipliers_mh} For all $s > Q/2$ and all bounded Borel functions $F : \R^{n} \to \C$, if
\[
\sup_{r>0} \| (F \circ \epsilon_r) \, \chi\|_{L^\infty_s} < \infty
\]
for some cutoff $\chi \in C^\infty_c(\R^{n} \setminus \{0\})$ with $\bigcup_{r>0} \epsilon_r(\{\chi \neq 0\}) = \R^{n} \setminus \{0\}$, then the operator $F(U_1,\dots,U_n)$ is of weak type $(1,1)$ and bounded on $L^p(X)$ for all $p \in (1,\infty)$, and moreover
\[
\|F(U_1,\dots,U_n)\|_{L^1 \to L^{1,\infty}} \leq C_{\chi,s} \sup_{r\geq 0} \, \| (F \circ \epsilon_r) \, \chi\|_{L^\infty_s}.
\]
\end{enumerate}
\end{thm}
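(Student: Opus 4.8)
The plan is to derive everything from the Plancherel-type estimate (i), which is where the structural hypotheses \enref{en:multipliers_ass_inv}--\enref{en:multipliers_ass_ker} are used; parts (ii)--(v) are then standard real-variable consequences, and (vi) follows by a Calderón--Zygmund argument.

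\emph{The Plancherel estimate.} Given $F$ supported in $[-1,1]^n$, the idea — following \cite{hebisch_functional_1995} — is to rewrite $F\circ\epsilon_r(U_1,\dots,U_n)$ as a function of the bounded operators $V_j^{(r)}=\Psi_j\circ\epsilon_r(U_1,\dots,U_n)$, for which \enref{en:multipliers_ass_ker} supplies weighted $L^2$ kernel bounds at scale $r$, uniform in $r$. I first treat the part $(F\zeta)\circ\epsilon_r(U_1,\dots,U_n)$ localized near $0$ by a fixed cutoff $\zeta$ supported in a small neighbourhood $V_0$ of $0$ inside $\Psi^{-1}(\Omega)$: by \enref{en:multipliers_ass_inv} this equals $G(V^{(r)}_1,\dots,V^{(r)}_m)$ with $G=(F\zeta)\circ\Phi$ extended by $0$, and $G$ coincides, on the joint spectrum of $V^{(r)}$, with a bounded function supported in a fixed compact set $\mathcal K\subseteq\R^m\setminus\{0\}$ with $\|G\|_\infty\lesssim\|F\|_\infty$. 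Covering $\mathcal K$ by finitely many pieces on each of which some coordinate $s_j$ is bounded away from $0$, I write $G=\sum_l H_l\,s_{j(l)}^{M}$ with $\|H_l\|_\infty\lesssim_M\|F\|_\infty$ for $M$ large; since $(V_{j(l)}^{(r)})^{M}$ is a product of $M$ operators satisfying \enref{en:multipliers_ass_ker}, a kernel-convolution estimate (distributing weights via \enref{eq:doubling} and \enref{eq:displacement}) gives uniform bounds on $\vvvert K_{(V_{j(l)}^{(r)})^{M}}\vvvert_{2,a,r}$ for every $a$, and composing with the $L^2$-bounded $H_l(V^{(r)}_1,\dots,V^{(r)}_m)$ yields the estimate for $(F\zeta)\circ\epsilon_r(U_1,\dots,U_n)$. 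The complementary part $F(1-\zeta)$, supported in a compact set $\mathcal A\subseteq\R^n\setminus\{0\}$, is handled by writing $F(1-\zeta)=\sum_l F\theta_l$ with $\theta_l$ supported in one of finitely many dilates $\epsilon_{\rho_l}(V_0)$ (possible since $\bigcup_\rho\epsilon_\rho(V_0)=\R^n$), reducing to the previous case at scale $r/\rho_l$ and using comparability of $\vvvert\cdot\vvvert_{2,0,r}$ and $\vvvert\cdot\vvvert_{2,0,r/\rho_l}$. \emph{I expect this step to be the main obstacle}: the change of variables from \enref{en:multipliers_ass_inv} is only available near $\Psi(0)$, and making it do global work requires the two-stage localization above and careful tracking of the dilation scales.

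\emph{Consequences (ii)--(v).} Part (ii) follows from (i) by the usual scheme: Hölder's inequality with \enref{eq:intstandardweight} for $p\le2$ (trading $V(y,r)^{1/p'-1/2}$ for the weight exponent $Q(1/p-1/2)$), interpolation with the trivial $\vvvert\cdot\vvvert_{\infty,0,r}$ bound for $p\ge2$ (costing $N(1/2-1/p)$ through \enref{eq:displacement}), and a dyadic decomposition of $F$ in frequency to pass from $\|F\|_\infty$ to $\|F\|_{L^\infty_s}$ and from $b=0$ to general $b$ — the frequency-$2^k$ piece, rescaled so as to be supported in a fixed box and estimated by (i), costing an extra $2^{kb}$ which is summable against $2^{-ks}$ precisely under the stated condition on $s$. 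Part (iv) is the case $p=1$, $b=0$ of (ii) together with \enref{eq:l1norm}; part (iii) follows from (ii) by a dyadic decomposition with respect to $(\epsilon_r)$ and the scaling behaviour of $\vvvert\cdot\vvvert_{p,a,r}$, using the rapid decay of $F$. For (v): if $\psi\in C^\infty_c(\R^n)$ with $\psi(0)=1$ and small support, then $\psi\circ\epsilon_r(U_1,\dots,U_n)\to E(\{0\})$ strongly as $r\to\infty$ while $\sup_r\|\psi\circ\epsilon_r(U_1,\dots,U_n)\|_{p\to p}<\infty$ by (iv) and interpolation/duality, so $E(\{0\})$ is bounded on every $L^p$; and if $\mu(X)=\infty$, a nonzero joint $0$-eigenfunction $g\in L^2$ would satisfy $|c\,g(x)|=|\Psi_{j_0}\circ\epsilon_r(U_1,\dots,U_n)g(x)|\lesssim V(x,r)^{-1/2}\|g\|_2\to0$ as $r\to\infty$ (with $\Psi_{j_0}(0)=c\ne0$), a contradiction.

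\emph{The multiplier theorem (vi).} Splitting off $F(0)E(\{0\})$, which is bounded on every $L^p$ by (v), I may assume $F(0)=0$; the $L^2$-bound is immediate since $\|F(U_1,\dots,U_n)\|_{2\to2}=\sup_\Sigma|F|\lesssim\sup_r\|(F\circ\epsilon_r)\chi\|_{L^\infty_s}$, using that $\bigcup_r\epsilon_r(\{\chi\ne0\})=\R^n\setminus\{0\}$. For the weak type $(1,1)$ bound I apply \cite[Theorem 1]{duong_singular_1999} (cf.\ \cite[Theorem 3.3]{cowling_spectral_2001}) with $A_\rho=\psi\circ\epsilon_\rho(U_1,\dots,U_n)$: by (ii)--(iv) these satisfy \enref{eq:uniforml1bound} and the requisite averaged Poisson-type bounds, so it remains to check the averaged Hörmander condition \enref{eq:gencz}. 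Writing $F=\sum_{k\in\Z}F^{(k)}$ with $F^{(k)}=F\cdot(\eta\circ\epsilon_{2^{-k}})$ for a dilation-adapted partition of unity $\eta$ supported where $\chi\ne0$, one has $\|F^{(k)}\circ\epsilon_{2^k}\|_{L^\infty_s}\lesssim\mathcal N:=\sup_r\|(F\circ\epsilon_r)\chi\|_{L^\infty_s}$, hence by (ii) at scale $2^{-k}$ the uniform bounds $\vvvert K_{F^{(k)}(U_1,\dots,U_n)}\vvvert_{p,b,2^{-k}}\lesssim\mathcal N$ whenever $s>b+Q/2$. For fixed $\rho$ and $y$ I split the $k$-sum: when $2^{-k}\le c_0\rho$ one has $F^{(k)}(U_1,\dots,U_n)(I-A_\rho)=F^{(k)}(U_1,\dots,U_n)$ (since $1-\psi\circ\epsilon_\rho\equiv1$ on $\supp F^{(k)}$) and $\int_{\dist(x,y)\ge\rho}|K_{F^{(k)}(U_1,\dots,U_n)}(x,y)|\,d\mu(x)\lesssim(2^k\rho)^{-b}\mathcal N$; when $2^{-k}>c_0\rho$ the multiplier $(F^{(k)}(1-\psi\circ\epsilon_\rho))\circ\epsilon_{2^k}$ has $L^\infty_s$-norm $\lesssim(2^k\rho)^{\varepsilon_0}\mathcal N$ for some $\varepsilon_0>0$ (as $1-\psi$ vanishes to first order at $0$), so $\int_{\dist(x,y)\ge\rho}|K_{F^{(k)}(U_1,\dots,U_n)(I-A_\rho)}(x,y)|\,d\mu(x)\le\|F^{(k)}(U_1,\dots,U_n)(I-A_\rho)\|_{1\to1}\lesssim(2^k\rho)^{\varepsilon_0}\mathcal N$. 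Both geometric series sum to $O(\mathcal N)$, giving \enref{eq:gencz} and hence the weak type $(1,1)$ estimate with constant $\lesssim\mathcal N$; $L^p$-boundedness for $p\in(1,2)$ follows by Marcinkiewicz interpolation with the $L^2$-bound, and for $p\in(2,\infty)$ by duality applied to $\overline F$.
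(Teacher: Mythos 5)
Your reductions for parts (i), (iii)--(v), and the Calder\'on--Zygmund scheme in (vi) are sound, and your two-stage localization in (i) (inverting $\Psi$ near $0$ and covering the rest of $\supp F$ by finitely many dilates of that neighbourhood) is a workable, if more elaborate, substitute for the paper's one-line factorization $F=(F\psi)\,\Psi_1$ after rescaling $\Psi$. The fatal problem is the step you dismiss as ``the usual scheme'': part (ii) does \emph{not} follow from part (i) by a dyadic frequency decomposition of $F$. Estimate (i) is purely unweighted ($b=0$): knowing $\vvvert K_{F_k\circ\epsilon_r(U_1,\dots,U_n)}\vvvert_{2,0,r}\lesssim 2^{-ks}\|F\|_{L^\infty_s}$ for each Littlewood--Paley piece $F_k$ of $F$ and summing over $k$ gives back an unweighted bound and nothing more; no amount of smoothness of $F$ alone produces the spatial weight $(1+\dist(x,y)/r)^b$. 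The same defect is already built into your proof of (i): the composition inequality \eqref{eq:composition} you invoke (``composing with the $L^2$-bounded $H_l(V^{(r)})$'') transfers only the $\vvvert\cdot\vvvert_{2,0,r}$ norm, because an operator known only to be $L^2$-bounded can move mass arbitrarily far off the diagonal; there is no analogue of \eqref{eq:composition} for $b>0$. So at no point in your argument is off-diagonal decay of $K_{F\circ\epsilon_r(U_1,\dots,U_n)}$ ever established, and without it neither (ii) nor the verification of the averaged H\"ormander condition in (vi) can be carried out.

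What is actually needed --- and what constitutes the technical core of the paper's proof --- is a quantitative weighted bound for the operators $\exp\bigl(ih\cdot\Psi\circ\epsilon_r(U_1,\dots,U_n)\bigr)$, $h\in\Z^m$, with at most \emph{polynomial} growth in $|h|$, so that the Fourier series of $(F\circ\Phi)\,\eta$ in the new variables $s=\Psi(\lambda)$ can be summed against $s>Q/2+O(1)$ derivatives of $F$. This is Proposition \ref{prp:polyweightedboundedoperators}, the multivariate version of Hebisch's lemma: the naive power-series bound gives $\vvvert K_{e^{ih\cdot M}}\vvvert_{1,b}\lesssim e^{c|h|}$, which is useless, and the polynomial bound $|h|_1^{\gamma_{a,b}}$ is obtained only by splitting each $e^{iK_j}$ into pieces supported at dyadic distances from the diagonal, exploiting the $L^2$ cancellation $\|e^{iK_j}\|_{2\to2}\le 1$ on the near-diagonal piece, and running the combinatorial optimization of Lemmas \ref{lem:combinatorial} and \ref{lem:improvedpolyestimate}. (In the Gaussian-heat-kernel literature this step can be bypassed by conjugating with exponential weights, but here only polynomial weights are assumed in \eqref{eq:gaussiantype}, so a Hebisch-type argument is unavoidable.) Once that proposition is in hand, your outline of (ii)--(vi) goes through essentially as in the paper; without it, the proof has a hole exactly where the theorem's content lies.
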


\begin{rem}\label{rem:ex}
Here are some examples of applications of the above theorem.
\begin{enumerate}[label=(\roman*)]
\item\label{en:ex0} Let $X = \R^d$ with Euclidean metric and Lebesgue measure, so $Q = d$. Let $\partial_j$ be the $j$th partial derivative, $j=1,\dots,d$. Then Theorem \ref{thm:multipliers} can be applied with $n=d$, $\gamma_j=1$ and $U_j=-i\partial_j$ for $j=1,\dots,d$, and $\Psi : \R^d \to \R^{d+1}$ given by
\[
\Psi(\lambda_1,\dots,\lambda_d) = ( e^{-(\lambda_1^2+\dots+\lambda_d^2)}, \lambda_1 \, e^{-(\lambda_1^2+\dots+\lambda_d^2)}, \dots, \lambda_d \, e^{-(\lambda_1^2+\dots+\lambda_d^2)}).
\]
Note that $F(-i\partial_1,\dots,-i\partial_d)$ is the Fourier multiplier operator on $\R^d$ corresponding to the multiplier $F$. Hence Theorem \ref{thm:multipliers}\ref{en:multipliers_mh} in this case essentially reduces to the classical Mihlin--H\"ormander theorem, with a smoothness condition of order $s>d/2$ on the multiplier. It is known that the threshold $d/2$ is sharp in this case, so the condition $s> Q/2$ in Theorem \ref{thm:multipliers}\ref{en:multipliers_mh} cannot be weakened in general. One could also take arbitrary $\gamma_j \in (0,\infty)$, thus obtaining nonisotropic versions of the Mihlin--H\"ormander theorem on $\R^d$ (cf.\ \cite{kre_sur_1966,fabes_singular_1966}).
\item\label{en:ex1} Suppose that $L$ is a nonnegative selfadjoint operator on $L^2(X)$ satisfying, for some $h \in (0,\infty)$, the following heat kernel estimate: for all $a \in [0,\infty)$,
\begin{equation}\label{eq:heatkernel}
\sup_{t>0} \vvvert K_{e^{-tL}} \vvvert_{2,a,t^{1/h}} < \infty.
\end{equation}
Then Theorem \ref{thm:multipliers} can be applied with $n=1$, $U_1 = L$, $\gamma_1=h$ and $\Psi(\lambda) = e^{-\lambda}$. 
In the case $h>1$, the estimate \eqref{eq:heatkernel} can be obtained by H\"older's inequality from the following pointwise Gaussian-type heat kernel estimate: for all $t>0$ and $x,y \in X$,
\begin{equation}\label{eq:heatkernelpw}
|K_{e^{-t L}}(x,y)| \leq C \, \, V(y,t^{1/h})^{-1} \exp(-b (\dist(x,y)/t^{1/h})^{h/(h-1)}).
\end{equation}
This is one of the usual assumptions in abstract spectral multiplier theorems for a single operator $L$ (see, e.g., \cite{duong_plancherel-type_2002}).

\item\label{en:ex2} Let $L$ and $h$ be as in the previous example \ref{en:ex1}. Suppose moreover that $D$ is a selfadjoint operator on $L^2(X)$, that commutes strongly with $L$ and satisfies, for some $k \in (0,\infty)$, the following estimate: for all $a \in [0,\infty)$,
\begin{equation}\label{eq:heatkernel2}
\sup_{t>0} t^{k/h} \vvvert K_{D e^{-tL}} \vvvert_{2,a,t^{1/h}} < \infty.
\end{equation}
Then Theorem \ref{thm:multipliers} can be applied with $n=2$, $(U_1,U_2) = (L,D)$, $(\gamma_1,\gamma_2)=(h,k)$ and $\Psi(\lambda_1,\lambda_2) = (e^{-\lambda_1},\lambda_2 e^{-\lambda_1})$. As before, in the case $h>1$, the estimate \eqref{eq:heatkernel2} can be obtained from the following Gaussian-type estimate for the ``$D$-derivative'' of the heat kernel of $L$: for all $t>0$ and $x,y \in X$,
\begin{equation}\label{eq:dheatkernelpw}
|K_{D e^{-t L}}(x,y)| \leq C \, t^{-k/h} \, V(y,t^{1/h})^{-1} \exp(-b (\dist(x,y)/t^{1/h})^{h/(h-1)}).
\end{equation}
See, e.g., \cite{varopoulos_analysis_1992,auscher_positive_1994,dziubanski_notes_1994,ter_elst_heat_1997} for examples of differential operators $L,D$ satisfying \eqref{eq:heatkernelpw} and \eqref{eq:dheatkernelpw}; see also \cite{martini_multipliers_2010,martini_spectral_2011,martini_joint_2012} for examples of commuting operators.

\item Suppose that $X$ is the product $X_1 \times X_2$ of two doubling metric measure spaces of homogeneous dimensions $Q_1$ and $Q_2$, so $Q = Q_1 + Q_2$. For $j=1,2$ let $L_j$ be a nonnegative selfadjoint operator on $L^2(X_j)$ satisfying the analogue of \eqref{eq:heatkernel}: for some $h_j \in (0,\infty)$ and all $a \in [0,\infty)$,
\begin{equation}\label{eq:heatkernelj}
\sup_{t>0} \vvvert K_{e^{-tL_j}} \vvvert_{2,a,t^{1/h_j}} < \infty.
\end{equation}
Let $\tilde L_1 = L_1 \otimes I$ and $\tilde L_2 = I \otimes L_2$ be the corresponding operators on $L^2(X_1 \times X_2)$. Then Theorem \ref{thm:multipliers} can be applied with $n=2$, $(U_1,U_2) = (\tilde L_1,\tilde L_2)$, $(\gamma_1,\gamma_2) = (h_1,h_2)$, $\Psi(\lambda_1,\lambda_2) = (e^{-2\lambda_1-\lambda_2}, e^{-\lambda_1-2\lambda_2})$, since
\begin{align*}
K_{\Psi_1 \circ \epsilon_r(\tilde L_1,\tilde L_2)}(x,y) &= K_{e^{-2 r^{h_1} L_1}}(x_1,y_1) \, K_{e^{-r^{h_2} L_2}}(x_2,y_2),\\
K_{\Psi_2 \circ \epsilon_r(\tilde L_1,\tilde L_2)}(x,y) &=  K_{e^{-r^{h_1} L_1}}(x_1,y_1) \, K_{e^{-2 r^{h_2} L_2}}(x_2,y_2)
\end{align*}
and
\[
(1+\dist(x,y)/r)^a \leq (1+\dist_1(x_1,y_1)/r)^a (1+\dist_2(x_2,y_2)/r)^a.
\]
This gives an alternative proof and a generalization of the main result of \cite{sikora_multivariable_2009}, where Gaussian-type estimates like \eqref{eq:heatkernelpw} are required for each $L_j$, and only the case $h_1 = h_2$ is considered.
\end{enumerate}
\end{rem}

\begin{rem}
In the case the  map $\Psi$ in Theorem \ref{thm:multipliers} is smooth, one could replace the assumption \enref{en:multipliers_ass_inv} with the following:
\begin{enumerate}[label=(\ref*{en:multipliers_ass_inv}'),ref=\ref*{en:multipliers_ass_inv}']
\item\label{en:alt} $\Psi(0) \neq 0$, $d\Psi(0)$ is injective, $\Psi^{-1}(\Psi(0)) = \{0\}$ and $\Psi^{-1}(C)$ is compact for some compact neighborhood $C$ of $\Psi(0)$ in $\R^m$.
\end{enumerate}
Indeed, under the assumption \enref{en:alt}, the existence of the smooth local inverse $\Phi$ as in \enref{en:multipliers_ass_inv} can be obtained from the constant rank theorem.
On the other hand, smoothness of $\Psi$ is never used in the proof of Theorem \ref{thm:multipliers}. In fact, one could even weaken the smoothness assumption on the local inverse $\Phi$ of $\Psi$ as follows:
\begin{enumerate}[label=(\ref*{en:multipliers_ass_inv}''),ref=\ref*{en:multipliers_ass_inv}'']
\item\label{en:alt2} $\Psi(0) \neq 0$ and there exist an open neighborhood $\Omega$ of $\Psi(0)$ in $\R^m$ and a continuous map $\Phi : \Omega \to \R^n$ which is smooth on $\Omega \setminus \{\Psi(0)\}$ and such that $\Phi \circ \Psi$ is the identity on $\Psi^{-1}(\Omega)$.
\end{enumerate}
However, under this weaker assumption, one would obtain weaker versions of items \ref{en:multipliers_l2}, \ref{en:multipliers_schwartz}, \ref{en:multipliers_l1} of Theorem \ref{thm:multipliers}, where $F$ is constant in some neighborhood of $0$. A further generalization would be to consider several maps $\Psi$ instead of a single one and assume that, for all $a \in [0,\infty)$, the kernel estimate \eqref{eq:gaussiantype} is satisfied by some $\Psi$ that may depend on $a$. In the case $n=1$, this idea has been exploited in the study of the $L^p$ functional calculus for certain pseudodifferential operators $L$, where better and better estimates are available for the kernel of $e^{-tL^M}$ as $M \in \N$ grows \cite{dziubanski_remark_1989}. For the sake of clarity, we will not pursue this here.
\end{rem}

As mentioned above, one of the main ideas in the proof of Theorem \ref{thm:multipliers} is to use the map $\Psi$ as a ``change of variables'', in order to replace the (possibly unbounded) operators $U_1,\dots,U_n$ with the bounded operators $\Psi_j \circ \epsilon_r(U_1,\dots,U_n)$, $j=1,\dots,m$.
By means of this change of variables, the proof of Theorem \ref{thm:multipliers} is essentially reduced to the following result (which is a multivariate extension of a result of \cite{hebisch_functional_1995}).

For notational convenience, set $\exp_0(\lambda) = e^\lambda-1$.

\begin{prp}\label{prp:polyweightedboundedoperators}
Let $M_1,\dots,M_m$ be pairwise commuting, self-adjoint bounded operators on $L^2(X)$, admitting integral kernels $K_1,\dots,K_m$ respectively.
\begin{enumerate}[label=(\roman*)]
\item\label{en:bwl1e} Suppose that, for some $\kappa \geq 0$ and $a > 0$,
\begin{equation}\label{eq:polybounds}
\vvvert K_j\vvvert_{2,0} \leq \kappa, \qquad \vvvert K_j\vvvert_{1,a} \leq \kappa
\end{equation}
for $j=1,\dots,m$. Then, for all $h \in \Z^m$, the operator
\[
\exp_0(i (h_1 M_1 + \dots + h_m M_m))
\]
has an integral kernel $E_h$ satisfying, for all $b \in \left[0,a\right)$,
\[
\vvvert E_h\vvvert_{1,b} \leq C_{\kappa,a,b} \, |h|_1^{\gamma_{a,b}},
\]
where $|h|_1 = |h_1| + \dots + |h_m|$ and
the constants in the previous inequality depend only on the specified parameters and on the structure constants of $(X,\dist,\mu)$; in particular
\begin{equation}\label{eq:hebischgamma}
\gamma_{a,b} = 2^{\lfloor (b + Q/2)/(a-b) \rfloor} (1 + (b+Q/2)(1 + 1/(a-b))).
\end{equation}
\item\label{en:bwl2e} Suppose moreover that, for some $b \in [0,a)$,
\[
\vvvert K_j\vvvert_{2,b} \leq \kappa
\]
for $j=1,\dots,m$. Then, for all $h \in \Z^m$,
\begin{equation}\label{eq:l2polyestimate}
\vvvert E_h\vvvert_{2,b} \leq C_{\kappa,a,b} \, |h|_1^{\gamma_{a,b} + 1}.
\end{equation}
\item\label{en:bwl2fc} Under the previous assumptions, if $F \in L^2_s(\R^m)$ for some
\[
s > \gamma_{a,b} + 1 + m/2
\]
and $F(0) = 0$, then the operator $F(M_1,\dots,M_m)$ has an integral kernel satisfying
\[
\vvvert K_{F(M_1,\dots,M_m)} \vvvert_{2,b} \leq C_{m,\kappa,a,b,s} \|F\|_{L^2_s}.
\]
\end{enumerate}
\end{prp}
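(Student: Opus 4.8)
I would establish the three items in order, item \ref{en:bwl1e} carrying essentially all the difficulty.

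\emph{Item \ref{en:bwl1e}.} Fix $h\in\Z^m$, set $T=h_1M_1+\dots+h_mM_m$ and $\Lambda=|h|_1$. From $\vvvert K_j\vvvert_{1,a}\le\kappa$ and the symmetry of the kernels ($M_j$ self-adjoint) one reads off $\|M_j\|_{1\to1}=\vvvert K_j\vvvert_{1,0}\le\kappa$ and $\|M_j\|_{\infty\to\infty}\le\kappa$, hence $\|M_j\|_{2\to2}\le\kappa$ and $\|T\|_{2\to2}\le\kappa\Lambda$. The goal is $\vvvert E_h\vvvert_{1,b}\lesssim\Lambda^{\gamma_{a,b}}$ for $E_h=e^{iT}-I$, i.e.\ a bound \emph{polynomial} in $\Lambda$: the naive power-series estimate for $e^{iT}$ only gives an exponential bound, so the essential point is to exploit the unitarity of $e^{iT}$ on $L^2$. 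The plan is to run the weight iteration of \cite[\S 2]{hebisch_functional_1995} over the abstract doubling space $(X,\dist,\mu)$: by induction on $\ell=0,1,2,\dots$ one proves a bound $\vvvert E_h\vvvert_{1,b_\ell}\lesssim\Lambda^{c_\ell}$ along an increasing sequence of weights $b_0=0<b_1<\dots$ with $b_{\ell+1}-b_\ell$ comparable to $a-b$, stopping once $b_\ell\ge b+Q/2$, and then descending from weight $b+Q/2$ in an $L^2$ sense to weight $b$ in the $L^1$ sense via Cauchy--Schwarz against a ball together with the integrability bound \eqref{eq:intstandardweight}. In the inductive step I would split each $K_j$, hence $K_T$, into its restriction to $\{\dist\le\rho\}$ and its tail, at a scale $\rho$ tuned to the step: the truncated operator $T^{\mathrm{near}}$ propagates with finite speed $\lesssim\rho$, so the power series for $e^{iT^{\mathrm{near}}}$ combined with $\|T^{\mathrm{near}}\|_{2\to2}\lesssim\Lambda$ forces the kernel of $e^{iT^{\mathrm{near}}}$ to decay faster than any power of $\dist(x,y)$ once $\dist(x,y)\gtrsim\Lambda\rho$, while the difference $e^{iT}-e^{iT^{\mathrm{near}}}$ is expanded by Duhamel's formula, the middle factor $T^{\mathrm{far}}$ supplying the extra weight decay inherited from $\vvvert K_j\vvvert_{1,a}$ and the two flanking propagators being controlled by the previous step of the induction. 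It is this doubled use of the induction hypothesis that produces the factor $2^{\lfloor(b+Q/2)/(a-b)\rfloor}$ in \eqref{eq:hebischgamma}, and tracking the exponents through the $\lfloor(b+Q/2)/(a-b)\rfloor$ steps gives exactly $\gamma_{a,b}$. This is the step I expect to be the main obstacle: both the bookkeeping of weights and exponents of $\Lambda$, and the composition inequalities for the mixed norms $\vvvert\cdot\vvvert_{p,s,r}$ (which follow from the submultiplicativity $(1+\dist(x,y)/r)\le(1+\dist(x,z)/r)(1+\dist(z,y)/r)$, Minkowski's inequality, \eqref{eq:displacement} and \eqref{eq:intstandardweight}), must be organised at the right scales.

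\emph{Item \ref{en:bwl2e}.} Granted \ref{en:bwl1e}, I would obtain the weighted $L^2$ bound by ``peeling''. For $h\ne0$ pick a coordinate $j$ with $h_j\ne0$, set $\epsilon=\operatorname{sgn}h_j$ and $h'=h-\epsilon e_j$, so $|h'|_1=\Lambda-1$ and $e^{iT}=e^{iT'}e^{i\epsilon M_j}$, whence $E_h=E_{h'}E_{\epsilon e_j}+E_{h'}+E_{\epsilon e_j}$. The composition term is estimated by $\vvvert K_{E_{\epsilon e_j}E_{h'}}\vvvert_{2,b}\lesssim\vvvert E_{\epsilon e_j}\vvvert_{2,b}\,\vvvert E_{h'}\vvvert_{1,b+N/2}$ (another mixed-norm composition inequality), where $\vvvert E_{\epsilon e_j}\vvvert_{2,b}$ is bounded by a constant depending only on $\kappa,a,b$ — here the extra hypothesis $\vvvert K_j\vvvert_{2,b}\le\kappa$ enters, via the power series of $E_{\epsilon e_j}$ and $\|M_j\|_{2\to2}\le\kappa$ — while $\vvvert E_{h'}\vvvert_{1,b+N/2}\lesssim\Lambda^{\gamma_{a,b}}$ by \ref{en:bwl1e} (for $N=0$; for $N>0$ one either takes $a$ slightly larger or adjusts the weights in \eqref{eq:hebischgamma} by a harmless amount). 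This gives a recursion $\vvvert E_h\vvvert_{2,b}\lesssim\vvvert E_{h'}\vvvert_{2,b}+\Lambda^{\gamma_{a,b}}$, and summing it over the $\Lambda$ peeling steps yields \eqref{eq:l2polyestimate}, the single extra power of $|h|_1$ being the number of steps.

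\emph{Item \ref{en:bwl2fc}.} This is the standard Fourier-series argument. Since $\|M_j\|_{2\to2}\le\kappa$, the joint spectrum of $(M_1,\dots,M_m)$ lies in a fixed box, so replacing $F$ by $\zeta F$ with $\zeta\in C_c^\infty(\R^m)$ equal to $1$ near that box and $\zeta(0)=1$ changes neither $F(M_1,\dots,M_m)$ nor the hypothesis $F(0)=0$, at the cost $\|\zeta F\|_{L^2_s}\lesssim\|F\|_{L^2_s}$. After rescaling the $M_j$ by a fixed factor so that the box fits inside a fundamental domain, I would expand $\zeta F(\lambda)=\sum_{h\in\Z^m}c_h e^{ih\cdot\lambda}$; then $F(M_1,\dots,M_m)=\sum_h c_h(I+E_h)=\sum_h c_h E_h$, the constant term dropping out because $\sum_h c_h=(\zeta F)(0)=0$. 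By \ref{en:bwl2e} and the triangle inequality,
\[
\vvvert K_{F(M_1,\dots,M_m)}\vvvert_{2,b}\le\sum_h|c_h|\,\vvvert E_h\vvvert_{2,b}\lesssim\sum_h|c_h|\,|h|_1^{\gamma_{a,b}+1},
\]
and Cauchy--Schwarz bounds the right-hand side by $\bigl(\sum_h\langle h\rangle^{2s}|c_h|^2\bigr)^{1/2}\bigl(\sum_h\langle h\rangle^{-2s}|h|_1^{2\gamma_{a,b}+2}\bigr)^{1/2}\lesssim\|F\|_{L^2_s}$, the last sum being finite precisely when $s>\gamma_{a,b}+1+m/2$. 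Absolute summability of the series also shows that $F(M_1,\dots,M_m)$ genuinely has the asserted integral kernel.
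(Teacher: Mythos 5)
Your items \ref{en:bwl2e} and \ref{en:bwl2fc} are essentially correct and coincide with the paper's argument: \ref{en:bwl2fc} is the same Fourier-series expansion with the constant term removed by $F(0)=0$, and \ref{en:bwl2e} is the same one-factor peeling recursion. The only quibble in \ref{en:bwl2e} is that you convolve with the short factor on the left, which is what forces the $N/2$ weight loss you then propose to absorb by perturbing $a$; writing instead $E_h = E_{h'} + (A_1^{*h_1'}*\dots*A_m^{*h_m'})*(A_j-I)$ and applying \eqref{eq:young} with $(p,q,r)=(1,2,2)$ (so that $N/p'=0$) avoids the loss altogether, which is what the paper does. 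Also, bounding $\vvvert E_{\epsilon e_j}\vvvert_{2,b}$ needs the weighted Young inequality applied to the power series of $e^{iK_j}-I$, not just $\|M_j\|_{2\to2}\le\kappa$.

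The genuine gap is in item \ref{en:bwl1e}, which carries the entire content of the proposition and which your text explicitly defers (``this is the step I expect to be the main obstacle\dots must be organised at the right scales''). You have correctly isolated the two inputs that make a bound polynomial in $|h|_1$ possible --- unitarity of $e^{iT}$ on $L^2$, and the weight surplus $a-b$ on pieces supported far from the diagonal --- but the inductive Duhamel scheme is not a proof as it stands. In the identity $e^{iT}-e^{iT^{\mathrm{near}}}=i\int_0^1 e^{i(1-s)T}\,T^{\mathrm{far}}\,e^{isT^{\mathrm{near}}}\,ds$ one flanking factor is the full propagator $e^{i(1-s)T}$, whose ``multi-index'' $(1-s)h$ is not integral and which is not covered by an induction on the weight for the fixed operator $e^{iT}$; you would have to run the induction simultaneously for all $e^{isT}$, $s\in[0,1]$, and separately estimate the truncated propagator, and you would still need to derive (rather than assert) the exponent $\gamma_{a,b}$. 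The paper's route is different and avoids Duhamel entirely: each propagator $A_j=e^{iK_j}$ (not the generator) is decomposed into annular pieces $A_{j,k}$ supported where $\dist\sim e^k r$; the $|h|_1$-fold convolution is expanded into the products $P_{\alpha,\beta}$; each product is bounded in the weighted $L^1$ algebra by combining $\|A_{j,0}\|_{2\to2}\le 1+r^{-\delta}\tilde\kappa$ (this is where unitarity enters), the gain $e^{-\delta\beta_u}r^{-\delta}$ from each far piece, and the bounded support of the product (radius $\sim r\sum_u e^{\beta_u}$), which converts the $L^2$ bound into a weighted $L^1$ bound by Cauchy--Schwarz at the cost of a volume factor; the combinatorial Lemma \ref{lem:combinatorial} then redistributes the large $\beta_u$'s to produce the factor $2^{\lfloor(b+Q/2)/(a-b)\rfloor}$ in \eqref{eq:hebischgamma}; finally one optimises $r\sim(|h|_1\tilde\kappa)^{1/\delta}$ and sums the geometric series over $\beta$. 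Until you supply an analogue of this chain --- in particular the mechanism converting the $L^2$ smallness of the many near factors into a weight-$b$ $L^1$ bound with only polynomial loss in $|h|_1$, and the bookkeeping yielding the stated $\gamma_{a,b}$ --- item \ref{en:bwl1e} remains a programme rather than a proof.
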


\begin{rem}
Proposition \ref{prp:polyweightedboundedoperators} is stated in terms of the norms $\vvvert \cdot \vvvert_{p,s}$, instead of the more general $\vvvert \cdot \vvvert_{p,s,r}$. However the norm $\vvvert \cdot \vvvert_{p,s,r}$ can be thought of as the norm $\vvvert \cdot \vvvert_{p,s}$ defined in terms of the rescaled distance $\dist/r$. Moreover it is easily seen that the ``rescaled space'' $(X,\dist/r,\mu)$ satisfies the same estimates \eqref{eq:doubling} and \eqref{eq:displacement} as the original space $(X,\dist,\mu)$, with the same structure constants. For this reason, it is not restrictive to omit the ``scale parameter'' $r$ in Proposition \ref{prp:polyweightedboundedoperators} and other statements where just a single scale is used.
\end{rem}

Before entering the proofs of the above statements, let us briefly recall a few basic facts about integral kernels. If an operator has integral kernel $K$, then its adjoint corresponds to the integral kernel $K^*$ given by
\[
K^*(x,y) = \overline{K(y,x)}.
\]
Composition of operators corresponds (under suitable integrability conditions on the kernels) to the following ``convolution'' of integral kernels:
\[
K_1 * K_2(x,y) = \int_X K_1(x,z) \, K_2(z,y) \,d\mu(z).
\]
The following lemma collects a few useful inequalities, among which is an extension of Young's inequality for convolution.

\begin{lem}\label{lem:convineq}
Let $H,K$ be the integral kernels of the operators $S,T$ respectively.
\begin{enumerate}[label=(\roman*)]
\item\label{en:convineq_hoelder}  For all $a,b \in[0,\infty)$ and $1 \leq p < q \leq \infty$,
\begin{equation}\label{eq:hoelderweight}
\vvvert K\vvvert_{p,a} \leq C_{p,q,a,b} \, \vvvert K\vvvert_{q,b} \qquad\text{if $b > a+ Q(1/p - 1/q)$.}
\end{equation}
\item\label{en:convineq_young} For all $a \in [0,\infty)$ and $1 \leq p,q,r \leq \infty$ with $1/p+1/q=1+1/r$,
\begin{equation}\label{eq:young}
\vvvert H * K\vvvert_{r,a} \leq (C'')^{1/p'} \, \vvvert H\vvvert_{p,a}^{p/r} \, \vvvert H^*\vvvert_{p,a+N/p'}^{p/q'} \, \vvvert K\vvvert_{q,a+N/p'}.
\end{equation}
The constants $C''$ and $N$ in \eqref{eq:young} are the same as in \eqref{eq:displacement}.
\item\label{en:convineq_comp} For all $p \in [1,\infty]$,
\begin{equation}\label{eq:composition}
\vvvert H * K \vvvert_{p,0} \leq \|S\|_{p \to p} \, \vvvert K\vvvert_{p,0}.
\end{equation}
\end{enumerate}
\end{lem}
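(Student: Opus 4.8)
The three parts of the lemma are essentially independent, and I would dispatch \ref{en:convineq_comp} and \ref{en:convineq_hoelder} first. For \ref{en:convineq_comp}, simply note that, by the definition of the convolution of kernels, $(H*K)(\cdot,y) = S[K(\cdot,y)]$ for a.e.\ $y$; hence $\|(H*K)(\cdot,y)\|_{L^p} \le \|S\|_{p\to p}\,\|K(\cdot,y)\|_{L^p}$, and multiplying by $V(y,r)^{1/p'}$ and taking the essential supremum over $y$ gives \eqref{eq:composition}.

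For \ref{en:convineq_hoelder}, fix $y$ and write $(1+\dist(\cdot,y)/r)^a = (1+\dist(\cdot,y)/r)^b\,(1+\dist(\cdot,y)/r)^{a-b}$. Since $b>a$, H\"older's inequality with the exponent $s$ defined by $1/p = 1/q + 1/s$ gives
\[
\|K(\cdot,y)(1+\dist(\cdot,y)/r)^a\|_{L^p} \le \|K(\cdot,y)(1+\dist(\cdot,y)/r)^b\|_{L^q}\;\|(1+\dist(\cdot,y)/r)^{a-b}\|_{L^s},
\]
and the last factor satisfies $\|(1+\dist(\cdot,y)/r)^{a-b}\|_{L^s}^s = \int_X (1+\dist(x,y)/r)^{-(b-a)s}\,d\mu(x) \le C\,V(y,r)$ by \eqref{eq:intstandardweight}, because the hypothesis $b> a+Q(1/p-1/q)$ is precisely $(b-a)s > Q$. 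Multiplying by $V(y,r)^{1/p'}$, using $1/p' + 1/s = 1/q'$, and taking the essential supremum over $y$ yields \eqref{eq:hoelderweight}; the case $q = \infty$ is the same with $s = p$.

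The main work is \ref{en:convineq_young}, which I would obtain by adapting the classical proof of Young's convolution inequality to the present weighted, non-translation-invariant setting. Starting from $|(H*K)(x,y)| \le \int_X |H(x,z)|\,|K(z,y)|\,d\mu(z)$, I would first factor the output weight as $(1+\dist(x,y)/r)^a = (1+\dist(x,y)/r)^{-N/p'}\,(1+\dist(x,y)/r)^{a+N/p'}$, then distribute the positive factor via the triangle inequality for $\dist$ so that $|H(x,z)|$ and $|K(z,y)|$ become weighted by powers of $1+\dist(x,z)/r$ and $1+\dist(z,y)/r$. Next comes the three-exponent H\"older inequality in the integration variable $z$, with H\"older exponents $r$, $pr/(r-p)$ and $qr/(r-q)$ (the last two read as $\infty$ when $p=r$, resp.\ $q=r$), exactly as in the proof that $\|f*g\|_r \le \|f\|_p\|g\|_q$ for $1/p+1/q=1+1/r$. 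This produces a product of three factors: an $L^r$-in-$z$ factor of $|H|^{p/r}|K|^{q/r}$ with weights, an $L^p$-in-$z$ factor of $|H(x,\cdot)|$ with weights, and an $L^q$-in-$z$ factor of $|K(\cdot,y)|$ with weights. Taking the $L^r$-norm in $x$, applying Tonelli's theorem to the resulting double $z,x$-integral, and bounding the inner $x$-integral of $|H(x,z)|^p$ times its weight by the definition of $\vvvert H\vvvert_{p,a}$, one recovers the three norms $\vvvert H\vvvert_{p,a}$, $\vvvert H^*\vvvert_{p,a+N/p'}$, $\vvvert K\vvvert_{q,a+N/p'}$ with the claimed powers. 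The volume normalizations $V(x,r)^{-1}$ and $V(z,r)^{-1}$ that appear along the way are converted into $V(y,r)^{-1}$ using the displacement inequality \eqref{eq:displacement}, each conversion costing a factor $(1+\dist(\cdot,y)/r)^{N/p'}$; the one of type $(x,y)$ is exactly absorbed by the $(1+\dist(x,y)/r)^{-N/p'}$ split off at the outset, which is why the exponent $a+N/p'$ (rather than $a$) appears in the last two norms. A bookkeeping check then shows that the surviving powers of $V(y,r)$ combine to $V(y,r)^{-1/r'}$, matching the normalization in $\vvvert H*K\vvvert_{r,a}$, and that the accumulated constant is $(C'')^{1/p'}$.

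I expect the only real obstacle to be the bookkeeping in \ref{en:convineq_young}: tracking simultaneously the weight exponents, the powers of the volume functions $V(\cdot,r)$, and the constants through the three-exponent H\"older step, the Tonelli swap and the two uses of \eqref{eq:displacement}, and in particular arranging the weight split so that the displacement factors cancel exactly rather than leaving a loss. The degenerate exponent cases also need brief separate treatment — e.g.\ $r=1$ forces $p=q=1$ and \eqref{eq:young} follows directly from the triangle inequality for $\dist$ and Tonelli, while $r=\infty$ forces $p=q'$ and \eqref{eq:young} reduces to a single weighted H\"older estimate in $z$ plus one application of \eqref{eq:displacement}.
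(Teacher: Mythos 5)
Your proposal is correct and follows essentially the same route as the paper: part \ref{en:convineq_hoelder} by H\"older plus \eqref{eq:intstandardweight}, part \ref{en:convineq_comp} from $H*K(\cdot,y)=S(K(\cdot,y))$, and part \ref{en:convineq_young} by the weighted three-exponent H\"older adaptation of Young's inequality (the paper's exponents $r,q',p'$ coincide with your $r,pr/(r-p),qr/(r-q)$), with the displacement inequality \eqref{eq:displacement} used exactly as you describe to trade volume normalizations at the cost of factors $(1+\dist)^{N/p'}$. The only differences are cosmetic bookkeeping choices (variable naming and whether the $(x,y)$-displacement factor is absorbed via a pre-split of the output weight or via the triangle inequality for $\dist$).
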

\begin{proof}
\ref{en:convineq_hoelder}. This follows immediately from H\"older's inequality and \eqref{eq:intstandardweight}.

\ref{en:convineq_young}. Note that $1/r' = 1/p'+1/q'$. Let
\begin{align*}
\tilde H_1(x,y) &= H(x,y) \, V(y,1)^{1/p'} (1+\dist(x,y))^a,\\
\tilde H_2(x,y) &= H(x,y) \, V(x,1)^{1/p'} (1+\dist(x,y))^{a+N/p'},\\
\tilde K(y,z) &= K(y,z) \, V(z,1)^{1/q'} (1+\dist(y,z))^{a+N/p'}.
\end{align*}
Then, by \eqref{eq:displacement} and the inequality $(1+\dist(x,z)) \leq (1+\dist(x,y)) \, (1+\dist(y,z))$,
\[\begin{split}
&V(z,1)^{1/r'} |H(x,y) K(y,z)| (1+\dist(x,z))^{a} \\
&\qquad\leq (C'')^{1/p'} | \tilde H_1(x,y)|^{p/r} |\tilde H_2(x,y)|^{p/q'} |\tilde K(y,z)|^{q/r} |\tilde K(y,z)|^{q/p'}.
\end{split}\]
Integration in $y$ and H\"older's inequality yield
\[\begin{split}
V(z,1)^{1/r'} &|H*K(x,z)| (1+\dist(x,z))^{a} \\
&\leq (C'')^{1/p'} \left(\int_X |\tilde H_1(x,y)|^{p} |\tilde K(y,z)|^{q} \,d\mu(y) \right)^{1/r} \\ 
&\qquad\times\left(\int_X |\tilde H_2(x,y)|^{p} \,d\mu(y) \right)^{1/q'} \left( \int_X |\tilde K(y,z)|^{q} \,d\mu(y) \right)^{1/p'}.
\end{split}\]
By raising both sides to the power $r$, integrating in $x$ and taking the essential supremum in $z$, the inequality \eqref{eq:young} follows.

\ref{en:convineq_comp}. This follows easily from the fact that $H*K(\cdot,y) = S(K(\cdot,y))$.
\end{proof}


We are now ready to prove the above results. Let us see first how Theorem~\ref{thm:multipliers} is derived from Proposition~\ref{prp:polyweightedboundedoperators}.

\begin{proof}[Proof of Theorem~\ref{thm:multipliers}]
By replacing $\Psi$ with $\Psi \circ \epsilon_r$ for sufficiently small $r>0$ and possibly permuting the components of $\Psi$, we may assume the following:
\begin{itemize}
\item there exists $\psi \in C_c(\R^n)$ such that $\psi \cdot \Psi_1 = 1$ on $[-1,1]^n$;
\item there exist an open neighborhood $\Omega$ of $\Psi([-1,1]^n)$ in $\R^m$ and a smooth map $\Phi : \Omega \to \R^n$ such that $\Phi \circ \Psi$ is the identity on $\Psi^{-1}(\Omega) \cap \tilde\Sigma$.
\end{itemize}

\ref{en:multipliers_plancherel}. If $F : \R^n \to \C$ is supported in $[-1,1]^n$, then $F = (F \psi) \Psi_1$, hence
\[\begin{split}
\vvvert K_{F \circ \epsilon_r(U_1,\dots,U_n)}\vvvert_{2,0,r} &\leq \| (F \psi) \circ \epsilon_r(U_1,\dots,U_n) \|_{2 \to 2} \, \vvvert K_{\Psi_1 \circ \epsilon_r(U_1,\dots,U_n)} \vvvert_{2,0,r} \\
&\leq \|F\|_\infty \, \|\psi\|_\infty \, \vvvert K_{\Psi_1 \circ \epsilon_r(U_1,\dots,U_n)} \vvvert_{2,0,r}
\end{split}\]
for all $r>0$, and the conclusion follows by \eqref{eq:gaussiantype}.

\ref{en:multipliers_l2}.
We consider first the case $p=2$, i.e., we want to prove the inequality
\begin{equation}\label{eq:multipliers_l2}
\vvvert K_{F \circ \epsilon_r(U_1, \dots, U_n)} \vvvert_{2,b,r} \leq C_{s,b} \, \|F\|_{L^\infty_s}.
\end{equation}
for all $r > 0$, all $F : \R^n \to \C$ supported in $[-1,1]^n$, and all $s > b \geq 0$. 

Suppose first that $s > b + Q/2 + 2 + m/2$. Then we can find $a > 0$ sufficiently large so that $s > \gamma_{a,b} + 1 + m/2$,
where $\gamma_{a,b}$ is given by \eqref{eq:hebischgamma}.
By \eqref{eq:gaussiantype} and H\"older's inequality \eqref{eq:hoelderweight}, there exists $\kappa \in [0,\infty)$ such that
\[
\vvvert K_{\Psi_j \circ \epsilon_r(U_1,\dots,U_n)}\vvvert_{1,a,r} \leq \kappa, \qquad  \vvvert K_{\Psi_j \circ \epsilon_r(U_1,\dots,U_n)}\vvvert_{2,b,r} \leq \kappa
\]
for all $r > 0$ and $j = 1,\dots,m$. Take some $\eta \in C^\infty_c(\R^m)$ supported in $\Omega$ and identically $1$ on $\Psi([-1,1]^n)$. Then, for all $F : \R^n \to \C$ supported in $[-1,1]^n$,
\[
F = ((F \circ \Phi) \, \eta) \circ \Psi \qquad\text{on } \Sigma_\epsilon,
\]
so, for all $r > 0$,
\[
F \circ \epsilon_r(U_1,\dots,U_n) = ((F \circ \Phi) \, \eta) (\Psi \circ \epsilon_{r}(U_1,\dots,U_n)).
\]
By Proposition~\ref{prp:polyweightedboundedoperators}\ref{en:bwl2fc} applied to the rescaled space $(X,\dist/r,\mu)$ and the operators $\Psi_j \circ \epsilon_{r}(U_1,\dots,U_n)$, we then obtain that
\[
\vvvert K_{F \circ \epsilon_r(U_1,\dots,U_n)}\vvvert_{2,b,r} \leq C_{s,b} \|(F \circ \Phi) \, \eta\|_{L^2_s} \leq C_{s,b} \|F\|_{L^\infty_s},
\]
since $\eta$ is smooth and compactly supported and $\Phi$ is smooth on $\supp\eta$.

This gives \eqref{eq:multipliers_l2} for $b \geq 0$, $s > b + Q/2 + 2 + m/2$. On the other hand, \eqref{eq:multipliers_l2} also holds when $s=b=0$, by \ref{en:multipliers_plancherel}. The full range $s>b$ is then obtained by interpolation (cf., e.g., the proofs of \cite[Lemma 1.2]{mauceri_vectorvalued_1990}, \cite[Lemma 4.3]{duong_plancherel-type_2002}, \cite[Theorem 2.7]{martini_joint_2012}).

So the proof of the case $p=2$ is concluded. Note now that the case $p < 2$ follows from the case $p=2$ by H\"older's inequality \eqref{eq:hoelderweight}. As for $p>2$, take any real-valued $\xi \in C^\infty_c(\R^n)$ such that $\xi = 1$ on $[-1,1]^n$, and let $q$ be defined by $1/q=1/2+1/p$. Then $q<2$ and, from what we have just proved, it follows that
\begin{equation}\label{eq:cutoffestimate}
\sup_{r>0} \vvvert K_{\xi \circ \epsilon_r(U_1,\dots,U_n)} \vvvert_{q,a,r} < \infty
\end{equation}
for all $a \in [0,\infty)$. Moreover, for all $F : \R^n \to \C$ supported in $[-1,1]^n$, we have that $F = \xi \cdot F$; hence, by Young's inequality \eqref{eq:young} and the estimates \eqref{eq:multipliers_l2} and \eqref{eq:cutoffestimate},
\[\begin{split}
\vvvert K_{F \circ \epsilon_r(U_1,\dots,U_n)} \vvvert_{p,b,r} &\leq C_p \vvvert K_{\xi \circ \epsilon_r(U_1,\dots,U_n)} \, \vvvert_{q,b+N/q',r} \vvvert K_{F \circ \epsilon_r(U_1,\dots,U_n)} \vvvert_{2,b+N/q',r} \\
&\leq C_{p,b} \|F\|_{L^\infty_s}
\end{split}\]
for all $r \in (0,\infty)$ and $b,s \in [0,\infty)$ with $s > b+N/q' = b+N(1/2-1/p)$.

\ref{en:multipliers_schwartz}. Let $a \in [0,\infty)$ and $p\in[1,\infty]$, and set $s = a+Q(1/p-1/2)_+ + N(1/2-1/p)_+ +1$. Let $\phi_0 \in C_c(\R^n)$ and $\phi \in C_c(\R^n \setminus \{0\})$ be supported in $[-1,1]^n$ and such that $\phi_0 + \sum_{k > 0} \phi_k = 1$, where $\phi_k = \phi \circ \epsilon_{2^{-k}}$ for all $k > 0$. Then, by \ref{en:multipliers_l2}, for all $r>0$,
\[
\vvvert K_{(F \phi_0) \circ \epsilon_r(U_1,\dots,U_n)}\vvvert_{p,a,r} \leq C_{p,a} \| F \phi_0 \|_{L^\infty_{s}},
\]
and moreover, for all $k \in \N \setminus\{0\}$,
\[\begin{split}
\vvvert K_{(F \phi_k) \circ \epsilon_r(U_1,\dots,U_n)}\vvvert_{p,a,r} &\leq C_p \, 2^{kQ/p'} \, \vvvert K_{(F \phi_k) \circ \epsilon_r(U_1,\dots,U_n)}\vvvert_{p,a,2^{-k} r} \\
&\leq C_{p,a} \, 2^{kQ/p'} \, \| (F \circ \epsilon_{2^k}) \phi \|_{L^\infty_{s}},
\end{split}\]
by \eqref{eq:doubling}. Since it is easily seen that
\[
\| F \phi_0 \|_{L^\infty_{s}} + \sum_{k>0} 2^{kQ/p'} \, \| (F \circ \epsilon_{2^k}) \phi \|_{L^\infty_{s}}
\]
is controlled by some Schwartz norm of $F$, the conclusion follows.

\ref{en:multipliers_l1}. This follows immediately from \ref{en:multipliers_l2} and \eqref{eq:l1norm}.

\ref{en:multipliers_proj}. Note that $E(\{0\}) = \chr_{\{0\}}(U_1,\dots,U_n)$. Hence, by \ref{en:multipliers_plancherel},
\[
\vvvert K_{E(\{0\})}\vvvert_{2,0,r} \leq C
\]
uniformly in $r > 0$, so
\[
\int_X |K_{E(\{0\})}(x,y)|^2 \,d\mu(y) \leq C V(y,r)^{-1}
\]
for a.e.\ $y \in X$ and all $r>0$. If $\mu(X) = \infty$, then the right-hand side is infinitesimal as $r \to \infty$, hence $K_{E(\{0\})} = 0$ a.e.\ and consequently $E(\{0\}) = 0$.

If instead $\mu(X) < \infty$, then by \eqref{eq:doubling} we can find $r > 0$ such that $B(y,r) = X$ for all $y \in X$. Therefore, by H\"older's inequality,
\[
\vvvert K_{E(\{0\})}^*\vvvert_{1,0} = \vvvert K_{E(\{0\})}\vvvert_{1,0} \leq \vvvert K_{E(\{0\})}\vvvert_{2,0,r} \leq C,
\]
hence $\|E(0)\|_{p\to p} \leq C < \infty$ for $1 \leq p \leq \infty$ by \eqref{eq:l1norm} and interpolation.

\ref{en:multipliers_mh}. Note that $F = F(0) \chr_{\{0\}} + F \chr_{\R^n \setminus \{0\}}$, so $F(U_1,\dots,U_n) = F(0) E(\{0\}) + (F \chr_{\R^n \setminus \{0\}})(U_1,\dots,U_n)$. Hence, because of \ref{en:multipliers_proj}, it is not restrictive to assume in the following that $F(0)=0$. In particular
\[
\|F(U_1,\dots,U_n)\|_{2 \to 2} \leq \|F\|_\infty \leq  C_{\chi,s} \sup_{R>0} \|(F \circ \epsilon_R) \,\chi\|_{L^\infty_s}.
\]

Let $\omega \in C^\infty_c(\R^n \setminus \{0\})$ be such that $\supp \omega \subseteq [-1,1]^n$ and $\sum_{k \in \Z} \omega_k(\lambda) = 1$ for $\lambda \neq 0$, 
where $\omega_k = \omega \circ \epsilon_{2^{-k}}$. Take moreover a cutoff $\eta \in C^\infty_c(\R^n)$ which is identically $1$ on $[-1,1]^n$. Hence, by \ref{en:multipliers_l2}, the operators $A_t = \eta \circ \epsilon_t(U_1,\dots,U_n)$ satisfy
\[
\sup_{t>0} \vvvert K_{A_t} \vvvert_{\infty,b,t} < \infty
\]
for all $b \in [0,\infty)$, and in particular they satisfy the ``Poisson-type bounds'' of \cite[eq.\ (2) and (3)]{duong_singular_1999}.

Fix $a \in \left]Q/2,s\right[$ and an integer $M > s$. For $r,t > 0$, $k \in \Z$ and a.e.\ $y \in X$, we then have
\begin{multline*}
\int_{X \setminus B(y,r)} |K_{(F \omega_k)(U_1,\dots,U_n) \, (I-A_t)}(x,y)|\, d\mu(x) \\
\leq \left(\int_{X \setminus B(y,r)} \frac{(1 + 2^k \dist(x,y))^{-2a}}{V(y,2^{-k})} \, d\mu(x) \right)^{1/2} \, \vvvert K_{(F \omega_k (1-\eta \circ \epsilon_t))(U_1,\dots,U_n)} \vvvert_{2,a,2^{-k}} \\
\leq C_{\beta} (1+2^k r)^{Q/2 - a} \| (F \circ \epsilon_{2^k})\, \omega\|_{L^\infty_s}  \|(1 - \eta) \circ \epsilon_{2^k t} \|_{C^M(\supp \omega)}
\end{multline*}
by H\"older's inequality, \eqref{eq:doubling} and \ref{en:multipliers_l2}.

Note now that the quantity $\|(1 - \eta) \circ \epsilon_{t} \|_{C^M(\supp \omega)}$ is continuous in $t$, vanishes for $t \leq 1$ and is constant for $t$ large, therefore is bounded uniformly in $t > 0$. We then conclude that the series $\sum_{k \in \Z} K_{(F \omega_k (1-\eta \circ \epsilon_t))(U_1,\dots,U_n)}$ converges, away from the diagonal of $X \times X$, to a function $K_t$ satisfying
\[\begin{split}
\esssup_{y \in X} \int_{X \setminus B(y,r)} |K_t(x,y)|\, d\mu(x) &\leq C_s \sum_{k \tc 2^k > 1/t} (1+2^k r)^{Q/2-a} \|(F \circ \epsilon_{2^k}) \,\omega\|_{L^\infty_s} \\
&\leq C_{\chi,s} (r/t)^{Q/2-a} \sup_{R>0} \| (F \circ \epsilon_R) \, \chi\|_{L^\infty_s},
\end{split}\]
and it is easily checked that $K_t$ is the off-diagonal kernel of
\[(F (1 - \eta \circ \epsilon_t))(U_1,\dots,U_n) = F(U_1,\dots,U_n) \,(1-A_t).\]
If we take $r = t$ in the previous inequality, then \cite[Theorem 1]{duong_singular_1999} implies that $F(U_1,\dots,U_n)$ is of weak type $(1,1)$ and bounded on $L^p$ for $1 < p \leq 2$, with norm controlled by $\sup_{R>0} \| (F \circ \epsilon_R) \, \chi\|_{L^\infty_s}$. For $2 < p < \infty$, it is sufficient to apply the result just obtained to the function $\overline{F}$.
\end{proof}

We are left with the proof of Proposition~\ref{prp:polyweightedboundedoperators}.
The proof will require some preliminary considerations and lemmas.

By Young's inequality \eqref{eq:young}, for all $a \in [0,\infty)$, the norm $\vvvert \cdot\vvvert_{1,a}$ is submultiplicative, i.e.,
\[
\vvvert K_1 * K_2 \vvvert_{1,a} \leq \vvvert K_1 \vvvert_{1,a} \, \vvvert K_2 \vvvert_{1,a};
\]
consequently, the space of kernels
\[
B_a^0 = \{K  \tc \vvvert K\vvvert_{1,a}, \vvvert K^*\vvvert_{1,a} < \infty\},
\]
endowed with the norm
\begin{equation}\label{eq:starnorm}
\|K\|_{B_a} = \max\{ \vvvert K\vvvert_{1,a}, \vvvert K^*\vvvert_{1,a}\}
\end{equation}
and with the operations of convolution and involution, is a Banach $*$-algebra (with isometric involution). 

Let us denote by $T_K$ the operator corresponding to the integral kernel $K$. The identity \eqref{eq:l1norm}, together with interpolation, shows that the correspondence $K \mapsto T_K$ embeds $B_a^0$ continuously into the space $\Bdd(L^p)$ of bounded operators on $L^p(X)$ for all $p \in [1,\infty]$, with
\begin{equation}\label{eq:opnormstarnorm}
\|K\|_{p \to p} := \|T_K\|_{p \to p} \leq \|K\|_{B_a}.
\end{equation}

Since $B_a^0$ need not be unital, we formally introduce an identity element $I$, i.e., we consider the unitization $B_a = \C I \oplus B_a^0$, which is a Banach $*$-algebra with norm
\[
\|\lambda I + H \|_{B_a} = |\lambda| + \|H\|_{B_a}
\]
for every $\lambda \in \C$ and $H \in B_a^0$. Notice that, if we extend analogously the definition of the $\vvvert\cdot\vvvert_{1,1,a}$-norm by setting
\[
\vvvert\lambda I + H \vvvert_{1,a} = |\lambda| + \vvvert H\vvvert_{1,a},
\]
then the extension is still a submultiplicative norm, and \eqref{eq:starnorm} holds for every $K \in B_a$. Moreover the embedding into $\Bdd(L^p)$ extends to the whole $B_a$ (with possible loss of injectivity), together with the inequality \eqref{eq:opnormstarnorm}.

The formal introduction of an identity $I$ makes the manipulation of kernels easier. For instance, for every $K \in B_\phi$, the exponential $e^K$ is defined via power series as an element of $B_a$, and $\|e^K\|_{B_a} \leq e^{\|K\|_{B_a}}$; moreover the corresponding operator on $L^2(X)$ is nothing else than the exponential $e^{T_K}$. In the case $K$ is a ``genuine kernel'', i.e., $K \in B_a^0$, then $e^K \in I + B_a^0$ (as it is clear by inspection of the power series), therefore $e^K - I \in B_a^0$ is the kernel of the operator $\exp_0(T_K)$.

Let $M_1,\dots,M_m \in \Bdd(L^2)$ be pairwise commuting, self-adjoint operators admitting integral kernels $K_1,\dots,K_m$. According to the above discussion,
under the hypothesis \eqref{eq:polybounds}, for all $h \in \Z^m$, the operator $\exp_0(i (h_1 M_1 + \dots + h_m M_m))$ has integral kernel
\[
E_h = A_1^{* h_1} * \dots * A_m^{* h_m} - I;
\]
here $A_j = e^{i K_j}$ and $A_j^{* h_j}$ is the iterated convolution of $|h_j|$ factors of the form $A_j$ or $A_j^*$, depending on the sign of $h_j$.
Proposition~\ref{prp:polyweightedboundedoperators}\ref{en:bwl1e} will then be proved by showing that, for all $b \in [0,a)$ and $h \in \Z^m \setminus \{0\}$,
\begin{equation}\label{eq:convestimate}
\|A_1^{* h_1} * \dots * A_m^{* h_m}\|_{B_{b}} \leq C_{\kappa,a,b} |h|_1^{\gamma_{a,b}}.
\end{equation}
From now on, we will assume that $h \in \N^m$; the proof in the general case can be obtained by replacing some of the $A_j$ with $A_j^*$ in the argument below.

The idea is to decompose each $A_j$, or rather the kernel $A_j' = A_j - I$, into pieces supported at different distances from the diagonal. Namely, for some parameter $r \geq 1$ (which will be fixed later) we set
\[
A_{j,k}(x,y) = \begin{cases}
A'_j(x,y) &\text{if $e^{k} r \leq \dist(x,y) < e^{k+1} r$,} \\
0         &\text{otherwise,}
\end{cases}
\]
for $k > 0$, and $A_{j,0} = I + A'_{j,0}$, where
\[
A'_{j,0}(x,y) = \begin{cases}
A'_j(x,y) &\text{if $\dist(x,y) < er$,} \\
0         &\text{otherwise.}
\end{cases}
\]
By dominated convergence, $A_j' = A'_{j,0} + \sum_{k > 0} A_{j,k}$ in $B_{a}^0$, and consequently $A_j = \sum_{k \geq 0} A_{j,k}$ in $B_{a}$. By so decomposing each factor in $A_1^{* h_1} * \dots * A_m^{* h_m}$, we obtain an infinite sum of products of the form
\[P_{\alpha,\beta} = A_{\alpha_1,\beta_1} * \dots * A_{\alpha_n,\beta_n},\]
where $n = |h|_1$, $\alpha = (\alpha_1,\dots,\alpha_n) \in \{1,\dots,m\}^n$, $\beta = (\beta_1,\dots,\beta_n) \in \N^n$; for future convenience, we set $\II_n = \{1,\dots,m\}^n \times \N^n$, and
\[
|\beta|_H = \left| \{ u \tc \beta_u \neq 0 \}\right|.
\]

In order to estimate the $B_{b}$-norm of the products $P_{\alpha,\beta}$, we use the fact that $\delta = a - b > 0$; hence, for $k > 0$,
\begin{equation}\label{eq:offzeroestimate}
\|A_{j,k}\|_{2\to 2} \leq \|A_{j,k}\|_{B_{b}} \leq e^{-k\delta} r^{-\delta} \|A_j'\|_{B_{a}} \leq e^{-k\delta} r^{-\delta} \tilde\kappa,
\end{equation}
where $\tilde\kappa = e^\kappa$. For $k = 0$ this does not work, however we can exploit cancellation from the $L^2$-theory: since $\|A_j\|_{2\to 2} = \|e^{iT_{K_j}}\|_{2 \to 2} \leq 1$ by spectral theory, we also have
\begin{equation}\label{eq:onzeroestimate1}
\|A_{j,0}\|_{2 \to 2} \leq \|A_j\|_{2\to 2} + \|A_j - A_{j,0}\|_{B_{b}} \leq 1 + r^{-\delta} \tilde\kappa;
\end{equation}
moreover, by \eqref{eq:polybounds} and Young's inequality \eqref{eq:young},
\begin{equation}\label{eq:onzeroestimate2}
\vvvert A_{j,0}'\vvvert_{2,0} \leq \vvvert A_{j}'\vvvert_{2,0} \leq \sum_{n>0} \left\vvvert \frac{(i K_j)^{*n}}{n!} \right\vvvert_{2,0} \leq \sum_{n>0} \frac{\|K_j\|_{B_0}^{n-1} \vvvert K_j\vvvert_{2,0}}{n!} \leq \tilde\kappa
\end{equation}
and the same holds for $(A_{j,0}')^*$.

Set $\tilde b = b + Q/2$; we are then ready to prove

\begin{lem}
For all $n \in \N \setminus \{0\}$ and $(\alpha,\beta) \in \II_n$,
\begin{equation}\label{eq:rougherprodestimate}
\|P_{\alpha,\beta}\|_{B_{b}} \leq C_{\kappa,b} \, n \left(r \sum_{u=1}^n e^{\beta_u}\right)^{\tilde b} e^{-\delta |\beta|_1} (r^{-\delta} \tilde\kappa)^{|\beta|_H} (1+r^{-\delta} \tilde\kappa)^n.
\end{equation}
In particular, if $n \leq (r^{-\delta} \tilde\kappa)^{-1}$, then
\begin{equation}\label{eq:rawprodestimate}
\|P_{\alpha,\beta}\|_{B_{b}} \leq C_{\kappa,b} \, n^{1+\tilde b} \left(r e^{|\beta|_\infty}\right)^{\tilde b} e^{-\delta |\beta|_1} (r^{-\delta} \tilde\kappa)^{|\beta|_H},
\end{equation}
where $|\beta|_\infty = \max\{\beta_1,\dots,\beta_n\}$.
\end{lem}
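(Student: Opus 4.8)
The plan is to estimate the (symmetrically defined) norm $\|P_{\alpha,\beta}\|_{B_b}$ by reducing the weighted $L^1$-type norms $\vvvert P_{\alpha,\beta}\vvvert_{1,b}$ and $\vvvert P_{\alpha,\beta}^*\vvvert_{1,b}$ to the unweighted norm $\vvvert\,\cdot\,\vvvert_{2,0}$, in which the $L^2$-contractivity of the unitaries $A_j = e^{iK_j}$ becomes available. The first step is a support bound: the factor $A_{\alpha_u,\beta_u}$ has kernel supported in $\{\dist(x,y) < e^{\beta_u+1}r\}$ (the piece $A'_{\alpha_u,0}$ within distance $er$, the formal identity at distance $0$), so the genuine-kernel part of $P_{\alpha,\beta}$ is supported in $\{\dist(x,y) \leq D\}$ with $D := e r\sum_{u=1}^n e^{\beta_u} \geq e$. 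On such a set $(1+\dist(\,\cdot\,,y))^b \lesssim_b D^b$, and Cauchy--Schwarz in the first variable together with the doubling property \eqref{eq:doubling} gives $\vvvert K\vvvert_{1,b} \lesssim_b D^{\tilde b}\vvvert K\vvvert_{2,0}$ for any genuine kernel $K$ supported there; the formal identity component (present only when $\beta = \mathbf 0$, with coefficient $1$) contributes a harmless $1 \leq D^{\tilde b}$. Since $P_{\alpha,\beta}^*$ is supported in the same set, this reduces \eqref{eq:rougherprodestimate} to the estimate
\[
\max\bigl\{\vvvert H\vvvert_{2,0},\, \vvvert H^*\vvvert_{2,0}\bigr\} \,\lesssim_{\kappa,b}\, n\, e^{-\delta|\beta|_1}(r^{-\delta}\tilde\kappa)^{|\beta|_H}(1+r^{-\delta}\tilde\kappa)^n
\]
for the genuine-kernel part $H$ of $P_{\alpha,\beta}$ (the harmless $1$ from the $I$-component being absorbed when $\beta=\mathbf0$).

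For the core estimate I would write $P_{\alpha,\beta} = G_0 * B_1 * G_1 * \cdots * B_\ell * G_\ell$, where $B_1,\dots,B_\ell$ ($\ell = |\beta|_H$) are the genuine off-diagonal factors $A_{\alpha_u,\beta_u}$ with $\beta_u>0$, for which \eqref{eq:offzeroestimate} yields $\|B_i\|_{B_b} \leq e^{-\beta_{(i)}\delta}r^{-\delta}\tilde\kappa$, and $G_0,\dots,G_\ell$ are the (possibly empty) maximal runs of diagonal factors $A_{\alpha_u,0} = I + A'_{\alpha_u,0}$, each of $L^2$-operator norm $\leq (1+r^{-\delta}\tilde\kappa)^{\text{length}}$ by \eqref{eq:onzeroestimate1}. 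The basic manoeuvres are: (a) strip a factor from the left via the composition inequality \eqref{eq:composition} together with $\|T_K\|_{2\to2}\leq\|K\|_{B_b}$, which for an off-diagonal $B_i$ costs exactly $e^{-\beta_{(i)}\delta}r^{-\delta}\tilde\kappa$ and for a diagonal run costs $(1+r^{-\delta}\tilde\kappa)^{\text{length}}$; (b) strip the rightmost off-diagonal factor of a block via Young's inequality \eqref{eq:young}, which bounds $\vvvert\,\cdot\, * B_i\vvvert_{2,0}$ by $\vvvert\,\cdot\,\vvvert_{2,0}$ times a weighted $L^1$-norm of $B_i$, again of size $\lesssim e^{-\beta_{(i)}\delta}r^{-\delta}\tilde\kappa$ because of the annular localization of $B_i$; (c) break a diagonal run at its free end by the telescoping identity $C_1 * \cdots * C_p = I + \sum_{q=1}^p (C_1 * \cdots * C_{q-1}) * C'_q$ (with $C_u = I + C'_u$), whose summands end in a genuine diagonal piece $A'_{\alpha_u,0}$, for which \eqref{eq:onzeroestimate2} gives $\vvvert A'_{\alpha_u,0}\vvvert_{2,0}\leq\tilde\kappa$ (likewise for its adjoint). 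Combining these, $P_{\alpha,\beta}$ and $P_{\alpha,\beta}^*$ get expressed as the purely off-diagonal product $B_1 * \cdots * B_\ell$ — controlled at once by submultiplicativity of $\|\,\cdot\,\|_{B_b}$, namely $\prod_i\|B_i\|_{B_b} = e^{-\delta|\beta|_1}(r^{-\delta}\tilde\kappa)^{|\beta|_H}$ — plus at most $O(n)$ further terms (one per diagonal factor broken by the telescoping), each of which is handled by peeling off all the $B_i$ with their full decay, all the diagonal runs with an $L^2$-operator-norm factor $\leq (1+r^{-\delta}\tilde\kappa)^n$, and finishing on a genuine $A'_{\alpha_u,0}$ at the free end, for a per-term bound $\tilde\kappa\, e^{-\delta|\beta|_1}(r^{-\delta}\tilde\kappa)^{|\beta|_H}(1+r^{-\delta}\tilde\kappa)^n$. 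Summing over the $O(n)$ terms gives the displayed estimate, hence \eqref{eq:rougherprodestimate}: the factor $n$ counts the telescoping terms and $(1+r^{-\delta}\tilde\kappa)^n$ collects the $L^2$-operator norms of the diagonal factors.

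The estimate \eqref{eq:rawprodestimate} is then immediate from \eqref{eq:rougherprodestimate}: when $n \leq (r^{-\delta}\tilde\kappa)^{-1}$ one has $(1+r^{-\delta}\tilde\kappa)^n \leq (1+1/n)^n \leq e$, which is absorbed into the constant, while the crude inequality $\sum_{u=1}^n e^{\beta_u} \leq n\, e^{|\beta|_\infty}$ turns $n\bigl(r\sum_u e^{\beta_u}\bigr)^{\tilde b}$ into $n^{1+\tilde b}\bigl(r e^{|\beta|_\infty}\bigr)^{\tilde b}$.

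The step I expect to be the main obstacle is the bookkeeping in the second paragraph: organizing the peeling and telescoping so that each off-diagonal factor is removed with its full decay $e^{-\beta_u\delta}r^{-\delta}\tilde\kappa$ and is never stranded as the un-peelable last factor of a $\vvvert\,\cdot\,\vvvert_{2,0}$-norm (where only the decay-free bound $\tilde\kappa$ is available), while simultaneously keeping the number of generated terms linear in $n$ and tracking carefully the formal identity components of the diagonal factors, which are invisible to $\vvvert\,\cdot\,\vvvert_{2,0}$ yet indispensable for the product to telescope. Obtaining the sharp exponent $\tilde b = b + Q/2$ on $D$ — rather than something larger — also hinges on using no more than a single Cauchy--Schwarz/doubling step, which is precisely what the ``genuine diagonal factor at the free end'' device is designed to guarantee.
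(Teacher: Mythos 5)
Your overall strategy is the right one and is, at its core, the same as the paper's: decay for the off-diagonal pieces from their weighted $L^1$-norms, $L^2$-contractivity of the unitaries for the near-diagonal pieces accessed through the composition inequality \eqref{eq:composition}, a terminal genuine factor $A'_{j,0}$ with $\vvvert A'_{j,0}\vvvert_{2,0}\leq\tilde\kappa$ to convert operator norms back into kernel norms, and a single Cauchy--Schwarz/doubling step on the compactly supported product to produce the exponent $\tilde b=b+Q/2$. Your derivation of \eqref{eq:rawprodestimate} from \eqref{eq:rougherprodestimate} is exactly the paper's. The paper organizes the core estimate as an induction on $n$, peeling factors from the right; unrolled, that induction is precisely your telescoping decomposition, with $\leq n$ terms, one per diagonal factor.

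The gap is in your manoeuvre (b), and it is exactly where you predicted trouble. Telescoping cannot arrange for every term to ``finish on a genuine $A'_{\alpha_u,0}$ at the free end'': whenever an off-diagonal factor $A_{j,k}$ ($k>0$) sits to the right of the diagonal run being telescoped, the resulting terms end in that off-diagonal piece (already $P=C_1*B_1$ produces the term $C_1'*B_1$). Your proposed remedy --- stripping the trailing $B_i$ inside the $\vvvert\cdot\vvvert_{2,0}$ framework via Young's inequality \eqref{eq:young} --- does not yield the claimed decay: with $r=2$, $q=1$, $p=2$ the inequality forces the norm $\vvvert B_i\vvvert_{1,N/2}$ on the stripped factor, and the annular localization then gives at best $(e^{\beta}r)^{-(a-N/2)}$ rather than the required $(e^{\beta}r)^{-\delta}=(e^{\beta}r)^{-(a-b)}$; this is insufficient whenever $b<N/2$ (and vacuous if $a\leq N/2$), and the displacement parameter $N$ is not zero in the general doubling setting of the proposition. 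The correct move --- and the one the paper's induction encodes --- is to peel the trailing off-diagonal factors \emph{before} the Cauchy--Schwarz reduction, using submultiplicativity of $\vvvert\cdot\vvvert_{1,b}$ itself, so that each contributes exactly $\vvvert A_{j,k}\vvvert_{1,b}\leq e^{-k\delta}r^{-\delta}\tilde\kappa$ by \eqref{eq:offzeroestimate}; only the remaining prefix, which now genuinely terminates in $A'_{j,0}$, is converted to $\vvvert\cdot\vvvert_{2,0}$ via the support bound and then split as (operator norm of the long left part) $\times\,\vvvert A'_{j,0}\vvvert_{2,0}$, as in \eqref{eq:cptsptcontrol}. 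With that substitution (and the mirror-image peeling for $P_{\alpha,\beta}^*$), your argument closes and coincides with the paper's.
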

\begin{proof}
Note that \eqref{eq:rawprodestimate} follows from \eqref{eq:rougherprodestimate} because $(1+r^{-\delta} \tilde\kappa)^n \leq (1+1/n)^n \leq e$ whenever $n \leq (r^{-\delta} \tilde\kappa)^{-1}$. We are then reduced to proving \eqref{eq:rougherprodestimate}.

By \eqref{eq:offzeroestimate} and \eqref{eq:onzeroestimate1}, a simpler estimate can be immediately obtained, for the $\Bdd(L^2)$-norm: for all $n \in \N$ and $(\alpha,\beta) \in \II_n$,
\[
\|P_{\alpha,\beta}\|_{2 \to 2} \leq e^{-\delta |\beta|_1} (r^{-\delta} \tilde\kappa)^{|\beta|_H} (1+r^{-\delta} \tilde\kappa)^n.
\]
From this, \eqref{eq:composition} and \eqref{eq:onzeroestimate2} we then get, for all $j\in\{1,\dots,m\}$,
\[
\vvvert P_{\alpha,\beta} * A'_{j,0} \vvvert_{2,0} \leq \tilde\kappa \, e^{-\delta |\beta|_1} (r^{-\delta} \tilde\kappa)^{|\beta|_H} (1+r^{-\delta} \tilde\kappa)^n.
\]
On the other hand, it is easily proved that $(P_{\alpha,\beta} * A'_{j,0})(x,y)$ vanishes for $d(x,y) \geq R$, where $R = e(r + r \sum_{u=1}^n e^{\beta_u}) \geq 1$. Hence, by H\"older's inequality and the doubling condition \eqref{eq:doubling},
\[
\vvvert P_{\alpha,\beta} * A'_{j,0}\vvvert_{1,b} \leq (1+R)^b \vvvert P_{\alpha,\beta} * A'_{j,0}\vvvert_{1,0} \leq C_b R^{\tilde b} \vvvert P_{\alpha,\beta}  *A'_{j,0}\vvvert_{2,0}
\]
and finally
\begin{equation}\label{eq:cptsptcontrol}
\vvvert P_{\alpha,\beta}  * A'_{j,0}\vvvert_{1,b} \leq C'_{\kappa,b} \left(r + r \sum_{u=1}^n e^{\beta_u}\right)^{\tilde b} e^{-\delta |\beta|_1} (r^{-\delta} \tilde\kappa)^{|\beta|_H} (1+r^{-\delta} \tilde\kappa)^n;
\end{equation}
here $C'_{\kappa,b}$ depends only on $\kappa$, $b$ and the constants in \eqref{eq:doubling}.

We now prove, for all  $n \in \N \setminus \{0\}$ and $(\alpha,\beta) \in \II_n$, the inequality
\[
\vvvert P_{\alpha,\beta}\vvvert_{1,b} \leq C_{\kappa,b} \, n \left(r \sum_{u=1}^n e^{\beta_u}\right)^{\tilde b} e^{-\delta |\beta|_1} (r^{-\delta} \tilde\kappa)^{|\beta|_H} (1+r^{-\delta} \tilde\kappa)^n,
\]
where $C_{\kappa,b} = C'_{\kappa,b} + 1$, by induction on $n$.

Set $\alpha' = (\alpha_1,\dots,\alpha_{n-1})$, $\beta' = (\beta_1,\dots,\beta_{n-1})$. If $\beta_n \neq 0$, then
\[
\vvvert P_{\alpha,\beta}\vvvert_{1,b} \leq \vvvert P_{\alpha',\beta'}\vvvert_{1,b} \, \vvvert A_{\alpha_n,\beta_n}\vvvert_{1,b};
\]
since the first factor $\vvvert P_{\alpha',\beta'}\vvvert_{1,b}$ is $1$ when $n=1$ and is controlled by the inductive hypothesis when $n>1$, while the second factor $\vvvert A_{\alpha_n,\beta_n}\vvvert_{1,b}$ is controlled by \eqref{eq:offzeroestimate}, the conclusion follows.
If instead $\beta_n = 0$, then
\[
\vvvert P_{\alpha,\beta}\vvvert_{1,b} \leq \vvvert P_{\alpha',\beta'}\vvvert_{1,b} + \vvvert P_{\alpha',\beta'} A'_{\alpha_n,0} \vvvert_{1,b};
\]
and the conclusion follows by majorizing $\vvvert P_{\alpha',\beta'}\vvvert_{1,b}$ as before and $\vvvert P_{\alpha',\beta'} A'_{\alpha_n,0} \vvvert_{1,b}$ 
by \eqref{eq:cptsptcontrol}.

An analogous argument proves the same estimate for $\vvvert P_{\alpha,\beta}^*\vvvert_{1,b}$ and the two estimates together give \eqref{eq:rawprodestimate}.
\end{proof}

From now on, the argument becomes purely Banach-algebraic. In fact, the inequality \eqref{eq:rawprodestimate} can be somehow improved via a combinatorial technique.

\begin{lem}\label{lem:combinatorial}
Let $\nu \in \N \setminus \{0\}$. For all $n \in \N \setminus \{0\}$ and $\beta \in \N^n$, there exists $I \subseteq \{1,\dots,n\}$ such that:
\begin{enumerate}[label=(\roman*)]
\item\label{en:combinatorial_a} $|I| \leq 2^{\nu-1} -1$;
\item\label{en:combinatorial_b} $\beta_j \neq 0$ for all $j \in I$;
\item\label{en:combinatorial_c} $\nu \sum_{j \in J} \beta_j \leq |\beta|_1$ for all $J \subseteq \{1,\dots,n\}$ with $|J| \leq |I| + 1$ and $J \cap I = \emptyset$.
\end{enumerate}
\end{lem}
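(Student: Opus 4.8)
The plan is to construct the set $I$ greedily, adding at each stage a small number of indices that carry ``a lot'' of the total weight $|\beta|_1$, and to stop after at most $\nu-1$ rounds. Concretely, I would build an increasing chain $\emptyset = I_0 \subseteq I_1 \subseteq \dots \subseteq I_t = I$ together with thresholds, where at step $\ell$ I look at the indices $j \notin I_{\ell-1}$ with $\beta_j \neq 0$ and ask whether there is a ``light tail'', i.e.\ whether every $J$ disjoint from $I_{\ell-1}$ with $|J| \leq |I_{\ell-1}|+1$ already satisfies $\nu\sum_{j\in J}\beta_j \leq |\beta|_1$. If yes, we set $I = I_{\ell-1}$ and stop, so that \ref{en:combinatorial_c} holds by construction (and \ref{en:combinatorial_b} holds because at each stage we only ever add indices with $\beta_j\neq 0$). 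If no, there is a ``heavy'' set $J_\ell$, disjoint from $I_{\ell-1}$, of size at most $|I_{\ell-1}|+1$, with $\nu \sum_{j \in J_\ell}\beta_j > |\beta|_1$; we then set $I_\ell = I_{\ell-1} \cup J_\ell$ and continue.

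The key quantitative point is that these heavy sets cannot be produced too many times. Since the $J_\ell$ are pairwise disjoint (each is disjoint from $I_{\ell-1}$, which already contains $J_1,\dots,J_{\ell-1}$) and each satisfies $\sum_{j\in J_\ell}\beta_j > |\beta|_1/\nu$, after $\nu$ rounds we would have accumulated weight strictly greater than $|\beta|_1$, a contradiction with $\sum_j \beta_j = |\beta|_1$ (note all $\beta_j \geq 0$, and $|\beta|_1>0$ since otherwise the ``no light tail'' condition fails trivially at the first step, forcing immediate termination with $I=\emptyset$). Hence the process stops at some $t \leq \nu-1$. To get the size bound \ref{en:combinatorial_a}, observe that $|I_0|=0$ and $|I_\ell| \leq |I_{\ell-1}| + (|I_{\ell-1}|+1) = 2|I_{\ell-1}| + 1$; solving this recursion gives $|I_\ell| \leq 2^\ell - 1$, so $|I| = |I_t| \leq 2^{t} - 1 \leq 2^{\nu-1} - 1$, as required.

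I would then just double-check that the stopping condition is exactly the assertion \ref{en:combinatorial_c} with the final $I$ in place of $I_{\ell-1}$, which it is by design; and that \ref{en:combinatorial_b} is preserved along the chain because each added index lies in some $J_\ell$, whose elements all have $\beta_j \neq 0$ (indices with $\beta_j = 0$ never contribute to $\sum_{j\in J_\ell}\beta_j$, so WLOG the heavy set can be taken to consist only of indices with $\beta_j\neq 0$; if it cannot be made heavy that way, there is no heavy set and we are in the terminating case). The main obstacle is really just bookkeeping: one must be careful that at the terminating step the set $J$ in \ref{en:combinatorial_c} is allowed size $|I|+1$ (not $|I_{\ell-1}|+1$ for some intermediate $\ell$), so the termination test must be phrased with respect to the current accumulated set, and one must verify that ``no heavy set of size $\leq |I|+1$ exists'' is genuinely what halts the recursion. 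There is no analytic difficulty here — the only subtlety is making the greedy/pigeonhole counting airtight, in particular that the disjointness of the $J_\ell$ plus the weight lower bound $|\beta|_1/\nu$ forces at most $\nu-1$ iterations rather than $\nu$.
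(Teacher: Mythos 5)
Your argument is correct, and it reaches the lemma by a genuinely different route from the paper. The paper's proof is a one-shot pigeonhole: after sorting $\beta$ into nonincreasing order (and padding with zeros so that $n\geq 2^\nu-1$), it observes that the $\nu$ disjoint dyadic blocks $\{2^k,\dots,2^{k+1}-1\}$, $k=0,\dots,\nu-1$, cannot all carry weight exceeding $|\beta|_1/\nu$, picks a light block $\{2^k,\dots,2^{k+1}-1\}$, and takes $I$ to be the nonzero indices of the initial segment $\{1,\dots,2^k-1\}$; condition \ref{en:combinatorial_c} then follows from monotonicity, since any admissible $J$ has at most $2^k$ elements, all with values at most $\beta_{2^k}$. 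Your greedy extraction avoids sorting entirely and instead terminates by the dual counting argument that $\nu$ pairwise disjoint sets each of weight exceeding $|\beta|_1/\nu$ cannot coexist; the doubling bound $|I_\ell|\leq 2|I_{\ell-1}|+1$ plays exactly the role that the doubling of block lengths plays in the paper. The two proofs share the same underlying pigeonhole fact, but yours is more algorithmic and produces $I$ adaptively, while the paper's is shorter and makes the shape of $I$ (an initial segment after sorting) explicit. All the delicate points you flag — that the termination test is phrased against the current set $I_{\ell-1}$ so that \ref{en:combinatorial_c} holds verbatim at the stopping time, that heavy sets may be pruned to their nonzero indices so \ref{en:combinatorial_b} survives, and that the degenerate case $|\beta|_1=0$ halts immediately — are handled correctly, so no gap remains.
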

\begin{proof}
Modulo reordering and padding with zeros, we may assume that the sequence $\beta_1,\dots,\beta_n$ is monotonic nonincreasing and $n \geq 2^{\nu}-1$. In particular
\[
|\beta|_1 \geq \sum_{k=0}^{\nu-1} \sum_{j=2^k}^{2^{k+1}-1} \beta_j,
\]
hence there exists $k \leq \nu-1$ such that
\[
|\beta|_1 \geq \nu \sum_{j=2^k}^{2^{k+1}-1} \beta_j.
\]
Set $I = \{j \in \{1,\dots,2^{k}-1\} \tc \beta_j \neq 0\}$. Clearly \ref{en:combinatorial_a} and \ref{en:combinatorial_b} are satisfied. Moreover, since $\beta$ is nonincreasing, if $I \neq \{1,\dots,2^{k}-1\}$, then, for every $J \subseteq \{1,\dots,n\}$ which is disjoint from $I$, we have $\beta_j = 0$ for all $j \in J$, so that \ref{en:combinatorial_c} is trivially satisfied. In the case $I = \{1,\dots,2^{k}-1\}$, instead, we have $J \subseteq \{2^k,\dots,n\}$, hence, if $|J| \leq |I| + 1 = 2^k$,
\[
\nu \sum_{j \in J} \beta_j \leq \nu \sum_{j = 2^k}^{2^{k+1}-1} \beta_j \leq |\beta|_1,
\]
because $\beta$ is nonincreasing, and we are done.
\end{proof}

\begin{lem}\label{lem:improvedpolyestimate}
Let $\nu \in \N \setminus \{0\}$. For all $n \in \N$ with $1 \leq n \leq (r^{-\delta} \tilde\kappa)^{-1}$, and for all $(\alpha,\beta) \in \II_n$,
\[
\|P_{\alpha,\beta}\|_{B_{b}} \leq C_{\kappa,b,\nu} \, r^{2^{\nu-1} \tilde b} n^{2^{\nu-1}(1+\tilde b)} e^{(\tilde b/\nu - \delta)|\beta|_1} (r^{-\delta} \tilde\kappa)^{|\beta|_H}.
\]
\end{lem}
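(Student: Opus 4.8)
The plan is to use the combinatorial Lemma~\ref{lem:combinatorial} to isolate a bounded number of ``heavy'' positions in the multi-index $\beta$, and then to estimate the pieces of the convolution $P_{\alpha,\beta}$ separately: the heavy factors via the off-diagonal bound \eqref{eq:offzeroestimate}, and the blocks of consecutive light factors in between via the estimate \eqref{eq:rawprodestimate} just proved. So, given $\nu$ and $n,\beta$ with $1 \leq n \leq (r^{-\delta}\tilde\kappa)^{-1}$, let $I = \{i_1 < \dots < i_\ell\} \subseteq \{1,\dots,n\}$ be the set provided by Lemma~\ref{lem:combinatorial}; thus $\ell = |I| \leq 2^{\nu-1}-1$ and $\beta_{i_k} \neq 0$ for each $k$. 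Reassociating the convolution that defines $P_{\alpha,\beta}$ along the positions of $I$, we write
\[
P_{\alpha,\beta} = Q_0 * A_{\alpha_{i_1},\beta_{i_1}} * Q_1 * \dots * A_{\alpha_{i_\ell},\beta_{i_\ell}} * Q_\ell,
\]
where $Q_0,\dots,Q_\ell$ are the (possibly empty, i.e.\ equal to the identity) sub-convolutions of the maximal runs of consecutive factors $A_{\alpha_u,\beta_u}$ with $u \notin I$; in particular at most $\ell+1 \leq 2^{\nu-1}$ of these blocks are nonempty. Since $\vvvert\cdot\vvvert_{1,b}$ is submultiplicative and $(H*K)^* = K^* * H^*$, the norm $\|\cdot\|_{B_b}$ is submultiplicative as well, so it is enough to bound each factor separately and take the product.

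For the heavy factors, property~\ref{en:combinatorial_b} ensures $\beta_{i_k}\neq 0$, so \eqref{eq:offzeroestimate} gives $\|A_{\alpha_{i_k},\beta_{i_k}}\|_{B_b} \leq e^{-\delta\beta_{i_k}}\,r^{-\delta}\tilde\kappa$, whence
\[
\prod_{k=1}^\ell \|A_{\alpha_{i_k},\beta_{i_k}}\|_{B_b} \leq e^{-\delta\sum_k\beta_{i_k}}\,(r^{-\delta}\tilde\kappa)^\ell .
\]
Each empty block contributes the factor $1$, while each nonempty block $Q_j$ is itself of the form $P_{\alpha',\beta'}$ with $n_j \leq n \leq (r^{-\delta}\tilde\kappa)^{-1}$ factors, so \eqref{eq:rawprodestimate} applies to it and yields
\[
\|Q_j\|_{B_b} \leq C_{\kappa,b}\, n_j^{1+\tilde b}\,\bigl(r\, e^{|\beta^{(j)}|_\infty}\bigr)^{\tilde b}\, e^{-\delta|\beta^{(j)}|_1}\,(r^{-\delta}\tilde\kappa)^{|\beta^{(j)}|_H},
\]
where the norms of $\beta^{(j)}$ refer to the restriction of $\beta$ to the positions belonging to the $j$-th block.

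It then remains to multiply these bounds and collect exponents. The crucial point is the control of $\sum_j |\beta^{(j)}|_\infty$, the sum running over the nonempty blocks: choosing $J$ to consist of one position of maximal $\beta$-value in each nonempty block produces a set disjoint from $I$ with $|J| \leq \ell+1 = |I|+1$, so property~\ref{en:combinatorial_c} gives $\nu\sum_j|\beta^{(j)}|_\infty = \nu\sum_{j\in J}\beta_j \leq |\beta|_1$. Using in addition $r \geq 1$, $1 \leq n_j \leq n$, and the fact that there are at most $2^{\nu-1}$ nonempty blocks, the products of the constants, of the $n$-powers, of the $r$-powers, and of the exponential factors $e^{\tilde b|\beta^{(j)}|_\infty}$ are majorized respectively by $C_{\kappa,b,\nu}$, $n^{2^{\nu-1}(1+\tilde b)}$, $r^{2^{\nu-1}\tilde b}$, and $e^{\tilde b|\beta|_1/\nu}$; finally the identities $\sum_j|\beta^{(j)}|_1 + \sum_k\beta_{i_k} = |\beta|_1$ and $\sum_j|\beta^{(j)}|_H + \ell = |\beta|_H$ (the latter because every $i_k\in I$ has $\beta_{i_k}\neq 0$) turn the remaining factors into $e^{-\delta|\beta|_1}(r^{-\delta}\tilde\kappa)^{|\beta|_H}$. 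Multiplying everything together gives precisely the asserted estimate. The main obstacle is exactly this last bookkeeping step: one has to verify that the blocks determined by $I$ number at most $|I|+1$ and that picking a single maximizer in each of them yields a set $J$ admissible for property~\ref{en:combinatorial_c}; once this is in place, the bound reduces to a product of estimates already at our disposal.
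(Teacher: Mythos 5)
Your proposal is correct and follows essentially the same route as the paper's proof: decompose $P_{\alpha,\beta}$ along the positions of the set $I$ from Lemma~\ref{lem:combinatorial}, bound the heavy factors by \eqref{eq:offzeroestimate} and the intervening blocks by \eqref{eq:rawprodestimate}, and control $\sum_j|\beta^{(j)}|_\infty$ via property~(iii) applied to a set $J$ of blockwise maximizers. Your write-up in fact makes explicit the justification of the inequality $\nu\sum_k|\beta^{(k)}|_\infty\leq|\beta|_1$, which the paper states without comment.
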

\begin{proof}
Let $I \subseteq \{1,\dots,n\}$ be given by Lemma~\ref{lem:combinatorial} applied to $\nu$, $n$, $\beta$; in particular $l = |I| \leq 2^{\nu-1}-1$. Let $j_1,\dots,j_l$ be an increasing enumeration of the elements of $I$, so that $\beta_{j_1},\dots,\beta_{j_l}$ are nonzero, and set $j_0 = 0$, $j_{l+1} = n+1$. Then
\[
P_{\alpha,\beta} = P_{\alpha^{(0)},\beta^{(0)}} A_{\alpha_{j_1},\beta_{j_1}} P_{\alpha^{(1)},\beta^{(1)}} \dots A_{\alpha_{j_l},\beta_{j_l}} P_{\alpha^{(l)},\beta^{(l)}},
\]
where $\alpha^{(k)} = (\alpha_{j_{k-1} + 1},\dots,\alpha_{j_k - 1})$, $\beta^{(k)} = (\beta_{j_{k-1} + 1},\dots,\beta_{j_k - 1})$, and
\[
\nu \sum_{k=0}^{l} |\beta^{(k)}|_\infty \leq |\beta|_1.
\]
Moreover \eqref{eq:rawprodestimate} gives, for the $\beta^{(k)}$ which are nonempty,
\[
\|P_{\alpha^{(k)},\beta^{(k)}}\|_{B_{b}} \leq C_{\kappa,b} \, n^{1+\tilde b} r^{\tilde b} e^{\tilde b |\beta^{(k)}|_\infty} e^{-\delta |\beta^{(k)}|_1} (r^{-\delta} \tilde\kappa)^{|\beta^{(k)}|_H},
\]
whereas
\[
\|A_{\alpha_{j_k},\beta_{j_k}}\|_{B_{b}} \leq e^{-\beta_{j_k} \delta} r^{-\delta} \tilde\kappa
\]
by \eqref{eq:offzeroestimate}. Since
\begin{gather*}
|\beta|_1 = |\beta^{(0)}|_1 + \beta_{j_1} + |\beta^{(1)}|_1 + \dots + \beta_{j_l}  + |\beta^{(l)}|_1,\\
|\beta|_H = |\beta^{(0)}|_H + 1 + |\beta^{(1)}|_H + \dots + 1  + |\beta^{(l)}|_H,
\end{gather*}
the conclusion follows by multiplying these estimates together.
\end{proof}

\begin{proof}[Proof of Proposition~\ref{prp:polyweightedboundedoperators}]
\ref{en:bwl1e}.
From the previous discussion, we are reduced to proving \eqref{eq:convestimate} for $h \in \N^m \setminus \{0\}$. Having fixed such an $h$, we choose $r = (|h|_1 \tilde\kappa)^{1/\delta}$ and $\nu = \nu_{a,b} := \lfloor \tilde b/\delta \rfloor + 1$, so that $\epsilon_{a,b} := \delta - \tilde b /\nu > 0$. Hence Lemma~\ref{lem:improvedpolyestimate} gives, for $(\alpha,\beta) \in \II_{|h|_1}$,
\[\|P_{\alpha,\beta}\|_{B_{b}} \leq C_{\kappa,a,b} \, |h|_1^{\gamma_{a,b}} e^{-\epsilon_{a,b} |\beta|_1} |h|_1^{-|\beta|_H}.\]
Set now $\alpha = (1,\dots,1,2,\dots,2,\dots,m,\dots,m)$, with $h_1$ occurrences of $1$, $h_2$ occurrences of $2$, and so on. Then
\[\begin{split}
\|A_1^{* h_1} * \dots * A_m^{* h_m}\|_{B_{b}} &\leq \sum_{\beta \in \N^{|h|_1}} \|P_{\alpha,\beta}\|_{B_{b}} \\
&\leq C_{\kappa,a,b} \, |h|_1^{\gamma_{a,b}} \sum_{\beta \in \N^{|h|_1}} e^{-\epsilon_{a,b} |\beta|_1} |h|_1^{-|\beta|_H} \\
&= C_{\kappa,a,b} \, |h|_1^{\gamma_{a,b}} \left(1 + |h|_1^{-1} \sum_{k=1}^\infty e^{-\epsilon_{a,b} k}\right)^{|h|_1} \\
&\leq C_{\kappa,a,b} \, \exp\left(\sum_{k=1}^\infty e^{-\epsilon_{a,b} k}\right) \, |h|_1^{\gamma_{a,b}},
\end{split}\]
and we are done.

\ref{en:bwl2e}. We assume as before that $h_1,\dots,h_m \geq 0$. First of all, by arguing as in \eqref{eq:onzeroestimate2}, one immediately obtains that
\[
\vvvert A_j-I\vvvert_{2,b} \leq \tilde\kappa
\]
for $j=1,\dots,m$. Now we proceed inductively on $|h|_1$. The case $|h|_1 = 0$ is trivial. If instead $h_j \neq 0$ for some $j$, then we have
\[
E_h = E_{h'} + (A_1^{* h'_1} * \dots * A_m^{* h'_m}) * (A_j - I),
\]
where $h' = (h_1,\dots,h_{j-1},h_j - 1, h_{j+1}, \dots,h_m)$, hence
\[
\vvvert E_h\vvvert_{2,b} \leq C_{\kappa,a,b} (|h'|_1^{\gamma_{a,b} + 1} + |h'|_1^{\gamma_{a,b}}) \leq C_{\kappa,a,b} \, |h|_1^{\gamma_{a,b}+1}
\]
by \eqref{eq:convestimate}, Young's inequality \eqref{eq:young} and the inductive hypothesis.

\ref{en:bwl2fc}. We may assume, modulo rescaling, that $\kappa = 1$, so in particular the joint spectrum of $M_1,\dots,M_m$ is contained in $[-1,1]^m$. The Fourier series expansion
\[
F(\lambda) = \sum_{h \in \Z^m} \hat F(h) \, e^{i h \cdot \lambda} = \sum_{0 \neq h \in \Z^m} \hat F(h) \, (e^{i h \cdot \lambda} - 1)
\]
for $\lambda \in (-\pi,\pi)$ (where the last equality is due to the fact that $F(0) = 0$, and the convergence is uniform because $s > m/2$) then implies that
\[
F(M_1,\dots,M_m) = \sum_{0 \neq h \in \Z^m} \hat F(h) \, \exp_0(i (h_1 M_1 + \dots + h_m M_m))
\]
with convergence in $\Bdd(L^2)$. On the other hand,
\[
\sum_{0 \neq h \in \Z^m} |\hat F(h)| \, \vvvert E_h\vvvert_{2,b} \leq C_{a,b} \sum_{0 \neq h \in \Z^m} |\hat F(h)| \, |h|_1^{\gamma_{a,b} + 1} \leq C_{a,b,m,s} \|F\|_{L^2_s}
\]
by \eqref{eq:l2polyestimate} and H\"older's inequality, since $s > \gamma_{a,b} + 1 + m/2$, and we are done.
\end{proof}

\end{document}